\theoremstyle{plain}
\newtheorem{theo}{Theorem}[section]
\newtheorem{lemm}[theo]{Lemma}
\newtheorem{coro}[theo]{Corollary}
\newtheorem{prop}[theo]{Proposition}
\theoremstyle{definition}
\newtheorem{defi}{Definition}[section]
\theoremstyle{remark}
\newtheorem{rema}{Remark}
\newtheorem*{rema*}{Remark}
\newcommand{\C}{\mathbb{C}}
\newcommand{\CP}{\mathbb{P}}
\newcommand{\Q}{\mathbb{Q}}
\newcommand{\R}{\mathbb{R}}
\newcommand{\Z}{\mathbb{Z}}
\newcommand{\ds}{\displaystyle}
\newcommand{\uC}{\underline{\mathbb{C}}}
\newcommand{\GL}{\text{GL}}
\def\a{\mathbf a}
\def\b{\mathbf b}
\def\c{\mathbf c}
\def\e{\mathbf e}
\def\g{\boldsymbol g}
\def\h{\boldsymbol h}
\def\r{\mathbf r}
\def\u{\mathbf u}
\def\v{\mathbf v}
\def\x{\mathbf x}
\def\z{\boldsymbol z}
\def\0{\mathbf 0}
\def\1{\mathbf 1}
\def\w{\mathbf w}
\def\n{\mathbf n}
\def\ZP{\mathcal{Z}_P}
\def\cB{\mathcal{B}}
\def\cK{\mathcal{K}}
\def\cV{\mathcal{V}}
\def\cK{\mathcal{K}}
\def\cI{\mathcal{I}}
\def\N_R{N_{\R}}
\def\M_R{M_{\R}}
\def\s<#1>{\langle #1 \rangle}
\def\l<#1>{\left\langle #1 \right\rangle}
\def\VAR{\mathcal{VAR}}
\def\cC{\mathcal{C}}
\DeclareMathOperator{\rank}{rank}
\DeclareMathOperator{\Ann}{Ann}
\DeclareMathOperator{\diag}{diag}
\DeclareMathOperator{\PU}{PU}
\DeclareMathOperator{\SU}{SU}
\DeclareMathOperator{\U}{U}
\DeclareMathOperator{\Int}{Int}
\DeclareMathOperator{\cone}{cone}
\DeclareMathOperator{\Aut}{Aut}
\DeclareMathOperator{\vt}{vc}
\DeclareMathOperator{\vcd}{\cC_n}
\begin{document}
\title[Toric manifolds over an $n$-cube with one vertex cut]{Classification of toric manifolds over an $n$-cube with one vertex cut}

\author[S. Hasui]{Sho Hasui}
\address{{Faculty of Liberal Arts and Sciences, Osaka Prefecture University, Osaka 599-8531, Japan.}}
\email{{s.hasui@las.osakafu-u.ac.jp}}

\author[H. Kuwata]{Hideya Kuwata}
\address{Department of Mathematics, Osaka City University, Sumiyoshi-ku, Osaka 558-8585, Japan.}
\email{hideya0813@gmail.com}

\author[M. Masuda]{Mikiya Masuda}
\address{Department of Mathematics, Osaka City University, Sumiyoshi-ku, Osaka 558-8585, Japan.}
\email{masuda@sci.osaka-cu.ac.jp}

\author[S. Park]{Seonjeong Park}
\address{Department of Mathematics, Osaka City University, Sumiyoshi-ku, Osaka 558-8585, Japan.}
\email{seonjeong1124@gmail.com}

\date{\today}

\thanks{The third author was partially supported by JSPS Grant-in-Aid for Scientific Research 16K05152}
\subjclass[2000]{Primary 55N10, 57S15; Secondary 14M25}
\keywords{toric manifold, polytope, Oda's $3$-fold, moment-angle manifold, cohomological rigidity.}

\begin{abstract}
We say that a {complete nonsingular} toric variety (called a toric manifold in this paper) {is over $P$ if its quotient} by the compact torus is homeomorphic to $P$ as a manifold with corners. Bott manifolds (or Bott towers) are toric manifolds over an $n$-cube $I^n$ and blowing them up at a fixed point produces toric manifolds over $\vt(I^n)$ an $n$-cube with one vertex cut. They are all projective. On the other hand, Oda's {$3$-fold, the} simplest non-projective toric manifold, is over $\vt(I^3)$. In this paper, we classify toric manifolds over $\vt(I^n)$ $(n\ge 3)$ as varieties and also as smooth manifolds. As a consequence, it turns out that (1) there are many non-projective  {toric manifolds} over $\vt(I^n)$ {but they} are all diffeomorphic, and (2) toric manifolds over $\vt(I^n)$  {in some class are determined by their cohomology rings} as varieties  {among toric manifolds}.
\end{abstract}
\maketitle
\bigskip
\section{Introduction}
A \textit{toric variety} of complex dimension $n$ is a normal complex algebraic variety with an algebraic action of $(\C^*)^n$ having an open dense orbit. In this paper, we are concerned with complete nonsingular toric varieties and call them \textit{toric manifolds}. As is well-known, the category of toric varieties is equivalent to the category of fans, by which the classification of toric manifolds as varieties reduces to a problem of combinatorics. Indeed, this fundamental fact enables us to classify toric manifolds of complex dimension $\le 2$ as varieties and also as smooth manifolds (see \cite{fult93}, \cite{oda88}).

However, the classification of fans up to isomorphism is not an easy task in general, and not much is known {about} the classification of toric manifolds as smooth (or topological) manifolds in complex dimension {$\ge 3$}. An intriguing problem posed in \cite{ma-su07} for the classification of toric manifolds as smooth (or topological) manifolds is what is now called the \textit{cohomological rigidity problem}, which asks whether toric manifolds are diffeomorphic (or homeomorphic) if their integral cohomology rings are isomorphic as graded rings.

Let $X$ be a toric manifold of complex dimension $n$. Its quotient by the compact torus $(S^1)^n$ of $(\C^*)^n$ is an $n$-dimensional manifold with corners.
It is often homeomorphic to a simple polytope as a manifold with corners, e.g., this is the case when $X$ is projective or $n\le 3$ but not the case in general (\cite{suya14}). When the quotient is homeomorphic to $P$ as a manifold {with} corners, we say that $X$ is over $P$.

Toric manifolds over an $n$-cube $I^n$ are called Bott manifolds (or Bott towers) and form an interesting {class} of toric manifolds (\cite{gr-ka94}). A Bott tower is a sequence of an iterated $\CP^1$-bundles starting with a point, where each $\CP^1$-bundle is {the} projectivization of the Whitney sum of two complex line bundles, and a Bott manifold is the top manifold in the sequence. The cohomological rigidity problem is not solved even for Bott manifolds but many {results have been produced in support of the affirmative answer to the problem, see~\cite{choi15}, \cite{C-M12}, \cite{C-M-M15}, \cite{ch-ma-su10}, \cite{ma-pa08}.}

Let $\vt(I^n)$ be an $n$-cube with one vertex cut. Blowing up Bott manifolds at a fixed point produces toric manifolds over $\vt(I^n)$. They are all projective since so are Bott manifolds. On the other hand, Oda's $3$-fold, which is known as the simplest non-projective toric manifold, is over $\vt(I^3)$. {So, it would be meaningful to classify toric manifolds over $\vt(I^n)$ as varieties and also as smooth manifolds, and we carry out the task in this paper.}

In order to state our {main results}, we introduce some terminology. We assume $n\ge 3$. The boundary complex $\vcd$ of the {simplicial polytope dual to} $\vt(I^n)$ is the underlying simplicial complex of the fans associated with the toric manifolds over $\vt(I^n)$. The simplicial complex $\vcd$ has $2n+1$ vertices, which we {give labels $1,2,\dots,2n+1$.} The vertex corresponding to the facet of $\vt(I^n)$ obtained by cutting a vertex of an $n$-cube $I^n$ is a distinguished vertex and we label it $2n+1$. There is a unique $(n-1)$-simplex in $\vcd$ which does not {intersect with the link of} the vertex $2n+1$. {The $n$ vertices of the simplex are labelled by} $1,2,\dots,n$. Then the labels of the remaining vertices of $\vcd$ are uniquely determined by requiring that the vertices $i$ and $n+i$ do not span a $1$-simplex of $\vcd$ for each $i=1,\dots,n$.

Let $X$ be a toric manifold over $\vt(I^n)$. We denote by $\v_i$ the primitive edge vector in the fan of $X$ corresponding to the vertex $i$ of $\vcd$. Then $\{\v_1,\v_2,\dots,\v_n\}$ forms a basis of the lattice of the fan {so that} we obtain an integer square matrix $A_X$ {satisfying}
\[
(\v_{n+1}, \v_{n+{2}},\dots,\v_{2n})=-(\v_1,\v_2,\dots,\v_n)A_X.
\]
It is not difficult to see that $\det A_X=1$ if and only if $X$ is the blow-up of a Bott manifold at a fixed point, {where the Bott manifold is associated with the fan obtained from the fan of $X$ by removing the vertex $2n+1$.}   {The following is our first main theorem, which} follows from Propositions~\ref{prop: cohomology ring of Type 3},~\ref{prop:coho_iso_det},~\ref{prop:diffeomorphism class of Type 0}, and~\ref{prop: diffeomorphism class of Type 2}.

\begin{theo} \label{theo:main1}
The determinant of $A_X$ above takes any integer when $X$ runs over all toric manifolds over $\vt(I^n)$ $(n\ge 3)$ and is invariant under isomorphisms of the cohomology rings of the toric manifolds. Moreover, unless the determinant is one, the following three statements are equivalent for toric manifolds $X$ and $X'$ over $\vt(I^n)$:
\begin{enumerate}
\item $X$ and $X'$ are diffeomorphic,
\item $H^*(X;\Z)$ and $H^*(X';\Z)$ are isomorphic as graded rings,
\item $\det A_X=\det A_{X'}$.
\end{enumerate}
In particular, there is only one diffeomorphism class for toric manifolds over $\vt(I^n)$ with determinant $q$ for each $q\not=1$.
\end{theo}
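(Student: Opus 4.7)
The plan is to assemble the theorem from the four cited propositions by organizing the chain $(1)\Rightarrow(2)\Rightarrow(3)\Rightarrow(1)$, together with the surjectivity statement $\det A_X(\cdot)\twoheadrightarrow\Z$. The implication $(1)\Rightarrow(2)$ is automatic. For surjectivity, one exhibits for each $q\in\Z$ a fan over $\vcd$ whose matrix has determinant $q$; given the rigid combinatorial structure of $\vcd$ (forced by the non-edges $\{i,n+i\}$ and the position of vertex $2n+1$), the matrix $A_X$ has a restricted shape, but enough entries remain free that any integer value of the determinant is realizable by direct construction. This is the content of Proposition~\ref{prop: cohomology ring of Type 3}.

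For the implication $(2)\Rightarrow(3)$, my approach is to extract $\det A_X$ as an intrinsic invariant of the graded ring $H^*(X;\Z)$. The Stanley--Reisner presentation gives degree-two generators $x_1,\dots,x_{2n+1}$ with the squarefree non-face relations $x_ix_{n+i}=0$ together with $n$ linear relations whose coefficient matrix is built from $(\v_1,\dots,\v_n)^{-1}(\v_{n+1},\dots,\v_{2n})=-A_X$. The combinatorial rigidity of $\vcd$ forces any graded ring isomorphism $H^*(X;\Z)\cong H^*(X';\Z)$ to match the canonical ideals $(x_i x_{n+i})$ on each side, up to possibly permuting pairs; one then reads off $\det A_X$ as a top intersection number, or equivalently as the signed discriminant of the linear system. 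This is Proposition~\ref{prop:coho_iso_det}.

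The main obstacle and heart of the proof is $(3)\Rightarrow(1)$. The plan is to fix, for each $q\ne 1$, a normal form for the fan and show that any $X$ with $\det A_X=q$ is diffeomorphic to the manifold associated with that normal form. The key leverage from the hypothesis $q\ne 1$ is that $X$ is \emph{not} a blow-up of a Bott manifold along a fixed point, so the distinguished ray $\v_{2n+1}$ sits in a generic affine position rather than at a corner of the cube fan. Propositions~\ref{prop:diffeomorphism class of Type 0} and~\ref{prop: diffeomorphism class of Type 2} carry this out in the two combinatorial cases (``Type~0'' and ``Type~2'') into which such $X$ fall, by producing explicit smooth diffeomorphisms to a standard model. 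In each case the model is presumably a generalized Bott-type iterated bundle whose diffeomorphism type is controlled by $q$ alone (any remaining parameters being eliminated by toric surgery or by smooth isotopy on the level of fans), so that the two cases glue to a single diffeomorphism class per $q\ne 1$.

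The hardest step will be verifying that no finer invariant than $q$ persists under diffeomorphism in the normal form: one must show that the smooth type is insensitive to all other entries of $A_X$ once $\det A_X\ne 1$. This is precisely the rigidity that fails when $q=1$, where the underlying Bott tower contributes additional classifying data, which is why the hypothesis $q\ne 1$ is essential. Once these propositions are in hand, the theorem is immediate: $(1)\Rightarrow(2)$ is trivial, $(2)\Rightarrow(3)$ is Proposition~\ref{prop:coho_iso_det}, and $(3)\Rightarrow(1)$ is the combination of Propositions~\ref{prop:diffeomorphism class of Type 0} and~\ref{prop: diffeomorphism class of Type 2}. The final sentence about one diffeomorphism class per $q\ne 1$ is then a restatement of $(3)\Rightarrow(1)$ combined with the surjectivity of $\det A_X$.
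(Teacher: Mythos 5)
Your overall strategy---assembling the theorem from the cited propositions via the chain $(1)\Rightarrow(2)\Rightarrow(3)\Rightarrow(1)$---is exactly what the paper does, but your account of $(3)\Rightarrow(1)$ has a genuine gap. The classification of fans over $\vcd$ (Proposition~\ref{prop: classification of A and b} and Theorem~\ref{theo:classification as varieties}) sorts the manifolds with $\det A_X\ne 1$ into \emph{three} cases, not two: Type~0 ($\det=0$), Type~2 ($\det=2$), and Type~3 ($\det\ne 0,1,2$). Propositions~\ref{prop:diffeomorphism class of Type 0} and~\ref{prop: diffeomorphism class of Type 2} handle only $q=0$ and $q=2$; they say nothing about, say, two manifolds with determinant $5$. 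For $q\ne 0,1,2$ the needed input is Theorem~\ref{theo:classification as varieties}(4): in Type~3 the pair $(A,\b)$ is uniquely determined by $\det A$, so there is a single variety isomorphism class, hence trivially a single diffeomorphism class. Your sentence asserting that all $X$ with $\det A_X\ne 1$ fall into the ``two combinatorial cases'' Type~0 and Type~2 is false, and without the Type~3 rigidity statement the implication $(3)\Rightarrow(1)$ is unproved for every $q\notin\{0,1,2\}$. Relatedly, the surjectivity of $X\mapsto\det A_X$ is not the content of Proposition~\ref{prop: cohomology ring of Type 3} (which is an injectivity statement: Type~3 cohomology determines the variety); it follows from the fan classification itself, since the Type~3 matrix with $(1,n)$-entry $a=q-1$ realizes every $q\ne 0,1,2$ and Types~0,~1,~2 realize the remaining values.

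A smaller but real issue is your claim that $\det A_X$ can be ``read off as a top intersection number.'' Corollary~\ref{coro: cohomology class of deg n}(1) gives $x^n=(\det A)^{n-1}$ times a generator of $H^{2n}$, and since the generator is only defined up to sign this recovers $|\det A|$, not $\det A$. Distinguishing, e.g., determinant $-1$ from determinant $1$ (and $-2$ from $2$) is precisely the extra work done in the proof of Proposition~\ref{prop:coho_iso_det}, using Lemma~\ref{lem: Annihilator} to pin down $x$ and then comparing square-zero degree-two elements in the quotient rings $H^*/(x)$. Citing Proposition~\ref{prop:coho_iso_det} as a black box is legitimate for this assembly-style proof, but the mechanism you describe would not by itself yield the signed invariance claimed in the theorem.
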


\begin{rema*}
By the theorem above, the cohomological rigidity holds for toric manifolds over $\vt(I^n)$ with determinant $q\not=1$. One can see that the cohomological rigidity holds for toric manifolds {over $\vt(I^n)$} with determinant one if and only if it holds for Bott manifolds {of complex dimension $n$}.
\end{rema*}

The classification of our toric manifolds as varieties is the following, {which follows from Theorem~\ref{theo:classification as varieties}, Propositions~\ref{prop:proj1} and~\ref{prop:proj2}}.

\begin{theo} \label{theo:main2}
Let $\VAR^n(q)$ denote {the set of variety isomorphism classes} of toric manifolds over $\vt(I^n)$ $(n\ge 3)$ with determinant $q$. Then we have the following.
\begin{enumerate}
\item $\VAR^n(q)$ consists of a single element for each $q\not=0,1,2$.
\item $\VAR^n(0)$ is parametrized by sequences $(b_1,\dots,b_n)$ of integers with $\sum_{i=1}^nb_i=1$ up to cyclic {permutation}.
\item If $X=X'$ in $\VAR^n(1)$, then the Bott manifolds {corresponding to $X$ and $X'$} are isomorphic as varieties.
\item $\VAR^n(2)$ is parametrized by sequences of $1$ and $-1$ of length $n$ with an odd number of $1$'s up to cyclic {permutation}.
\end{enumerate}
Moreover, all elements in $\VAR^n(q)$ are projective when $q\not=2$, and one element in $\VAR^n(2)$ is projective while the others in $\VAR^n(2)$ are non-projective.
\end{theo}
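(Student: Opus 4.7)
The plan is to parametrize toric manifolds over $\vt(I^n)$ by normal forms of their fan data and then pass to orbits under the relevant equivalence. After fixing $\v_i = \e_i$ for $i = 1, \dots, n$, each such $X$ is encoded by the matrix $A_X$ together with the coordinates of $\v_{2n+1}$ in this basis, and variety isomorphism classes correspond to orbits under the combined action of $\GL_n(\Z)$ (lattice automorphisms) and the combinatorial automorphisms of the labeled complex $\vcd$. Because the $(n-1)$-simplex $\{1,\dots,n\}$ and the vertex $2n+1$ are intrinsically distinguished and the non-edge conditions force each pair $\{i,n+i\}$ to be preserved, the effective combinatorial automorphism group acts on the basis as $S_n$, permuting these pairs. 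Unimodularity at each maximal cone of $\vcd$ gives $\pm 1$ determinant conditions on the minors of the augmented matrix $[A_X \mid \v_{2n+1}]$, which after $\GL_n(\Z)$-normalization reduce $A_X$ to an explicit parametric form in which $\det A_X$ is a simple polynomial in the remaining scalar parameters.

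With this set-up, the case analysis organises by $q = \det A_X$. For $q \notin \{0,1,2\}$ the remaining parameters are absorbed by the combined $S_n \times \GL_n(\Z)$ action, giving a single orbit and hence (1). For $q = 0$ an $n$-tuple $(b_1, \dots, b_n)$ with $\sum b_i = 1$ survives, and after fixing the normal form of $A_X$ the residual symmetry is precisely cyclic, yielding (2). For $q = 1$ the fan obtained by deleting the ray $\v_{2n+1}$ is that of a Bott manifold $X'$ and the blow-down morphism $X \to X'$ is intrinsic (it contracts the unique exceptional divisor), which implies (3). For $q = 2$ the constraints force the free entries of $A_X$ to be signs $\epsilon_i \in \{\pm 1\}$; the condition $\det A_X = 2$ singles out exactly the patterns with an odd number of $+1$'s, and the residual symmetry is cyclic, giving (4).

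The hardest part is the projectivity statement, particularly for $q = 2$. For $q \neq 2$ I would exhibit strictly convex support functions by explicit formulas: for $q = 1$ projectivity is automatic from the blow-up description, and for the other values of $q$ a small deformation of the corresponding Bott-type support function remains strictly convex on the modified fan. For $q = 2$, however, projectivity becomes sign-pattern-sensitive: the plan is to identify one cyclic orbit of sign sequences admitting an explicit strictly convex piecewise-linear function, and for every other cyclic orbit to produce a specific linear relation among the primitive generators that violates the strict convexity inequalities, generalising the classical obstruction argument for the non-projectivity of Oda's $3$-fold (the $n = 3$ prototype). Cleanly separating the projective orbit from the non-projective ones uniformly across all $n$ will be the most delicate step.
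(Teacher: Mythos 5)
Your classification strategy for parts (1)--(4) is essentially the paper's: fix the basis $\{\v_1,\dots,\v_n\}$, use the rigidity of $\Aut(\vcd)$ (only the $S_n$ permuting the pairs $\{i,n+i\}$ survives, since the vertex $2n+1$ and the simplex $\{1,\dots,n\}$ are intrinsically distinguished for $n\ge 3$), impose unimodularity of the maximal cones to force all proper principal minors of $A_X$ to equal $1$ and to force $\b$ to satisfy $\tilde A\b=\1$ (so $\b=\frac{1}{\det A}\sum_i\a_i$ is determined by $A$ whenever $\det A\neq 0$, leaving $\b$ as a free parameter only when $q=0$), and then reduce to the cyclic normal form whose residual symmetry is exactly the cyclic group. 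One point to make explicit: the paper needs an induction (projecting the fan onto $\Z^n/\langle\a_i\rangle$) to show that in the case $\det A\neq 0,1,2$ exactly $n-1$ of the off-diagonal entries equal $-1$, which is what collapses $\VAR^n(q)$ to a single element; "absorbed by the combined action" glosses over this completeness argument, which is where the actual work lies.

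The genuine divergence, and the place where your plan is at risk, is the non-projectivity of the $q=2$ manifolds with at least three $+1$'s in the sign pattern. You propose to exhibit, for each such cyclic orbit and each $n$, an explicit violation of the strict convexity inequalities, generalizing the cycle obstruction for Oda's $3$-fold. This is not known to work uniformly and would require a delicate combinatorial analysis of cycles of adjacent maximal cones for every sign pattern. The paper avoids this entirely with one structural observation: projecting the fan onto $\Z^n/\langle\a_k\rangle$ with $a_k=-1$ produces the fan of an invariant subvariety of the same shape with one fewer $-1$, and iterating lands on Oda's $3$-fold whenever the pattern has at least three $+1$'s; since every invariant subvariety of a projective toric variety is projective, non-projectivity propagates back up. I would strongly recommend replacing your direct obstruction plan with this reduction. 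For the remaining projectivity claims your plan matches the paper ($q=1$ is a blow-up of a projective Bott manifold; the other cases are handled by explicit piecewise linear strictly convex functions, though writing these down and verifying condition (C2) on every maximal cone is a lengthy but routine computation rather than a "small deformation" argument).
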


\begin{rema*}
Oda's $3$-fold lies in $\VAR^3(2)$. By (4) above, the cardinality of $\VAR^n(2)$ is equal to the number of binary necklaces of length $n$ with an odd number of zeros, which has been studied in combinatorics. See~\cite[A000016]{oeis}. {Indeed,} this number is known as
\[
\frac{1}{2n}\sum_{\substack{d|n\\ d:\text{odd}}}\varphi(d)2^{n/d}
\]
where $\varphi$ denotes Euler's totient function. {It approaches infinity as $n$ approaches infinity.} 
\end{rema*}

Combining the two theorems above, we obtain two interesting corollaries.

(1) {The} toric manifold over $\vt(I^n)$ with determinant $2$ as a smooth manifold admits a projective variety structure {and} a non-projective variety structure.

(2) We say that a toric manifold $X$ is \textit{cohomologically super-rigid} if any toric manifold $Y$ such that $H^*(Y;\Z)\cong H^*(X;\Z)$ as graded rings is isomorphic to $X$ as {a variety}. In general, it is not true that if $X$ is over $P$ and $H^*(Y;\Z)\cong H^*(X;\Z)$ as graded rings, then $Y$ is also over $P$, but this is true when $P$ is $\vt(I^n)$, {see \cite[Section 6]{ch-pa-su10}.\footnote{{In~\cite{ch-pa-su10}, they assumed that the orbit space of $Y$ is a simple polytope, but their proof is applicable when the orbit space of $Y$ is the dual of a simplicial sphere.}}} Therefore, it follows from the two theorems above that a toric manifold over $\vt(I^n)$ with determinant not equal to $0,1,2$ is cohomologically super-rigid.

This paper is organized as follows. We recall some basic facts about toric varieties in Section~\ref{sec:Preliminaries}. In Section~\ref{sec:Classification of fans}, we investigate the complete {nonsingular} fans associated with toric manifolds over $\vt(I^n)$. We divide the family of the fans into four types and classify the fans up to isomorphism, which proves Theorem~\ref{theo:main2} except {for} the projectivity. We study general properties of the cohomology rings of our toric manifolds in Section~\ref{sec:Properties of cohomology rings of the toric manifolds.}, and then study isomorphism classes of the cohomology rings in each type in Sections~\ref{sec:On cohomology rings of the toric manifolds in each type}. These observations {lead to} the invariance of our determinant under cohomology ring isomorphisms mentioned in Theorem~\ref{theo:main1}. In Sections~\ref{sec:proof of diffeo type 0} and~\ref{sec:proof of diffeo type 2}, we discuss smooth classification of our toric manifolds with determinant $0$ and $2$ respectively. We use the quotient construction of toric manifolds in the former case while we use moment-angle manifolds developed in toric topology in the latter case. Section~\ref{sec:Projectivity} is devoted to the determination of (non-)projectivity of our toric manifolds. {In Appendix, we give a proof to a key proposition used in Section~\ref{sec:proof of diffeo type 2} about moment-angle manifolds and finish with some remarks.}

{Throughout this paper, all cohomology groups will be taken with integer coefficients unless otherwise stated.}

\section{Preliminaries} \label{sec:Preliminaries}
In this section, we prepare some notations and {recall some} basic facts about toric geometry. For more details we refer the reader to  \cite{Bu-Pa15}, \cite{CLS2011}, \cite{fult93}, \cite{oda88}.

\medskip

\noindent\textbf{Toric varieties and Fans.}
A \emph{ toric variety} is a normal variety $X$ that contains an algebraic torus $(\C^*)^n$ as a dense open subset, together with an action $(\C^*)^n\times X\to X$ of $(\C^*)^n$ on $X$ that extends the natural action of $(\C^*)^n$ on itself. A {complete nonsingular} toric variety is called a \emph{toric manifold} in this paper.

A fundamental result of toric geometry is that there is a bijection between toric varieties of complex dimension~$n$ and rational fans of real dimension~$n$, {and toric manifolds correspond to complete nonsingular fans. More strongly, the category of toric varieties is equivalent to the category of fans.}

Throughout this paper, toric varieties we consider {are} toric manifolds, so we focus on simplicial fans.

Let $\cK$ be a simplicial complex with $m$ vertices. We identify the vertex set of $\cK$ with the index set $[m]:=\{1,\ldots,m\}$. Consider a map
\[
\cV:[m]\to\Z^n.
\]
{Henceforth we will denote $\cV(i)$ by $\v_i$.}
The pair $\Delta=(\cK,\cV)$ is called a \emph{(simplicial) fan} of dimension $n$ if it satisfies the following:
\begin{enumerate}
\item $\v_i$'s for $i\in I$ are linearly independent (over $\R$) whenever $I\in\cK$
\item $\cone(I)\cap\cone(J)=\cone(I\cap J)$ for $I,J\in\cK$, where $\cone(I)$ is the cone spanned by $\v_i$'s for $i\in I$, that is,
\[
\cone(I)=\left.\left\{\sum_{i\in I}\alpha_i \v_i~~\right|~~\alpha_i\geq0~\text{for all}~i\in I\right\}.
\]
\end{enumerate}
{The simplicial complex $\cK$ is called the underlying simplicial complex of the fan $\Delta$, and we also say that $\Delta$ is over $\cK$.}
{The} fan $\Delta$ is \emph{complete} if $\bigcup_{I\in\cK}\cone(I)=\R^n$ and \emph{nonsingular}
if the $\v_i$'s for $i\in I$ form a part of {a} basis of $\Z^n$ whenever $I\in\cK$. {Two fans} $\Delta$ and $\Delta'$ are \emph{isomorphic} if there is an isomorphism $\kappa:\cK\to\cK'$ {and  $R\in \GL_{n}(\Z)$} such that $\v_{\kappa(i)}'=R \v_{i}$ for every $i\in[m]$.
{When the fan $\Delta$ is complete and nonsingular,} we denote by $X(\Delta)$ the toric manifold associated with $\Delta$. {We say that {$X(\Delta)$ is over $P$ when the orbit space of $X(\Delta)$ by the compact subtorus of $(\C^*)^n$ is homeomorphic to $P$ as a manifold with corners}.

\medskip
\noindent\textbf{Quotient construction of toric manifolds.}
{Suppose that the fan} $\Delta=(\cK,\cV)$ {is} complete {and} nonsingular. We set
\begin{align*}
Z:=\bigcup_{J\not\in\cK}\{(z_{1},\ldots,z_{m})\in\C^{m}\mid z_{j}=0\text{ for all } j\in J\}, \text{ and }U(\cK):=\C^{m}\setminus Z.
\end{align*}
{The natural action of $(\C^{\ast})^{m}$ on $\C^{m}$ leaves the subset $U(\cK)$ of $\C^{m}$ invariant.
For $\v=[v_{1},\ldots,v_{n}]^T\in\Z^{n}$, we define $\lambda_{\v}\colon \C^*\to (\C^*)^n$ by
\[
\lambda_{\v}(h):=(h^{v_1},\dots, h^{v_n})
\]
and $\lambda_{\cV}:(\C^{\ast})^{m}\to(\C^{\ast})^{n}$ by
\begin{align*}
\lambda_{\cV}(h_{1},\ldots,h_{m}):={\lambda_{\v_1}(h_1)\cdots \lambda_{\v_m}(h_m)}
\end{align*}
where $\v_{i}=\cV(i)\in \Z^n$. The nonsingularity of the fan $\Delta$ implies that $\lambda_{\cV}$ is surjective, so $(\C^{\ast})^{m}/\ker\lambda_{\cV}$ can be identified with $(\C^{\ast})^{n}$ via $\lambda_{\cV}$. Then
\begin{align*}
X(\Delta):=U(\cK)/\ker\lambda_{\cV}
\end{align*}
has an induced effective action of $(\C^{\ast})^m/\ker\lambda_{\cV}\cong (\C^*)^n$ having an open dense orbit and finitely many orbits. Indeed, $X(\Delta)$ with this action of $(\C^{\ast})^{n}$ is the toric manifold associated with the fan $\Delta$.}

\medskip
\noindent\textbf{The cohomology ring of a toric manifold $X(\Delta)$.}
{For each $i=1,\dots,m$, the subset $D_i\subset X(\Delta)$ defined by $z_i=0$ is an invariant divisor fixed pointwise by the $\C^*$-subgroup $\lambda_{\v_i}(\C^*)$.
The Poincar\'e dual of $D_i$ gives a cohomology class $\mu_i$ of degree $2$ in {the cohomology} ring $H^{\ast}(X(\Delta))$.}

\begin{theo}[Danilov-Jurkiewicz] \label{theo:Dan-Jur}
The cohomology ring $H^{\ast}(X(\Delta))$ is isomorphic to $\Z[\mu_1,\ldots,\mu_m]/\cI$ as {a} graded ring, where $\cI$ is the ideal generated by the following two types of elements:
\begin{enumerate}
\item $\prod_{i\in I}\mu_i~~~(I\not\in \cK)$, {and}
\item $\sum_{i=1}^m\s<\u,\v_i>\mu_i$ for any $\u\in\Z^n$,
\end{enumerate}
{where $\langle\ ,\ \rangle$ denotes the standard scalar product on $\Z^n$.}
\end{theo}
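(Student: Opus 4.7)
The plan is to verify that the two families of relations vanish in $H^*(X(\Delta))$, deduce a graded ring surjection from the presented ring onto the cohomology, and then match graded ranks to upgrade that surjection to an isomorphism.

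First I would check the relations. If $I\notin\cK$, then $\bigcap_{i\in I}D_i=\emptyset$ because $I$ is not the index set of any cone of $\Delta$; replacing the $D_i$ by rationally equivalent transverse representatives (which is always possible on a toric manifold) yields $\prod_{i\in I}\mu_i=0$. For the linear relations, each $\u\in\Z^n$ defines a character of $(\C^*)^n$ which extends to a rational function on $X(\Delta)$ whose principal divisor is precisely $\sum_{i=1}^m\s<\u,\v_i>D_i$; since principal divisors are cohomologically trivial, the corresponding linear combination of $\mu_i$'s vanishes. Together these give a well-defined graded ring homomorphism $\varphi\colon \Z[\mu_1,\ldots,\mu_m]/\cI\to H^*(X(\Delta))$.

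To see that $\varphi$ is surjective, I would use a cell decomposition of $X(\Delta)$: a generic one-parameter subgroup of $(\C^*)^n$ stratifies $X(\Delta)$ into affine cells indexed by the cones of $\Delta$, giving $X(\Delta)$ a CW structure with only even-dimensional cells. Hence $H^*(X(\Delta))$ is a finitely generated free abelian group concentrated in even degrees. The closure of each cell is a $(\C^*)^n$-invariant subvariety of the form $V(\sigma)=\bigcap_{i\in I}D_i$ for $\sigma=\cone(I)\in\cK$, whose Poincar\'e dual is the monomial $\prod_{i\in I}\mu_i$, and these duals additively span $H^*(X(\Delta))$, so $\varphi$ is surjective.

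The main obstacle is injectivity, which reduces to a Betti number comparison. The cleanest route passes through equivariant cohomology for the compact torus $T=(S^1)^n$: the even-cell decomposition yields equivariant formality, and via the Borel construction one identifies $H^*_T(X(\Delta))$ with the Stanley-Reisner face ring $\Z[\mu_1,\ldots,\mu_m]/\cI_1$ of $\cK$, where $\cI_1$ is the monomial ideal generated by the elements in (1). Choosing a basis $\u_1,\ldots,\u_n$ of $\Z^n$, the $n$ linear forms $\sum_i\s<\u_j,\v_i>\mu_i$ form a regular sequence on this face ring, and equivariant formality identifies the resulting quotient with $H^*(X(\Delta))$. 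Matching Hilbert series on both sides, each equal to the $h$-polynomial of $\cK$, then forces $\varphi$ to be injective and completes the proof.
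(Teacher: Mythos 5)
The paper offers no proof of Theorem~\ref{theo:Dan-Jur}: it is the classical Danilov--Jurkiewicz theorem, stated with pointers to \cite{fult93}, \cite{oda88}, \cite{Bu-Pa15}, \cite{CLS2011}, so there is nothing in the text to compare your argument against. Judged on its own, your sketch is the standard proof from those sources and is essentially correct: the monomial relations vanish because $\bigcap_{i\in I}D_i=\emptyset$ for $I\notin\cK$, the linear relations are the divisors of the characters $\chi^{\u}$, and the Bialynicki--Birula decomposition produces even-dimensional cells whose closures are orbit closures, so the classes $\prod_{i\in I}\mu_i$ generate and $\varphi$ is onto. Two places would need more care in a full write-up. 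First, the closure of the cell attached to a maximal cone $\sigma$ is $V(\tau_\sigma)$ for a face $\tau_\sigma$ of $\sigma$ determined by the chosen one-parameter subgroup, not $V(\sigma)$ itself; your phrasing is compatible with this, but the indexing is exactly what makes the number of $2k$-cells equal to $h_k(\cK)$, which you need for the rank comparison. Second, the regular-sequence and Hilbert-series step is routinely carried out over a field; to conclude integrally you must know that $\Z[\mu_1,\ldots,\mu_m]/\cI$ is torsion-free of graded rank given by the $h$-vector, and this is precisely where nonsingularity of the fan (the $\v_i$ spanning each cone extend to a basis of $\Z^n$) enters --- for merely simplicial complete fans the integral statement fails. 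With those points supplied, either your equivariant-formality route or Fulton's direct filtration argument gives a complete proof, and since the theorem is only quoted in the paper, either is an acceptable substitute.
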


For a simplicial complex $\cK$ on $[m]$, a subset $I\subset[m]$ is a \emph{minimal non-face} of $\cK$ if $I\not\in\cK$ but all proper subsets $I'$ of $I$ are simplices of $\cK$. It is known that every simplicial complex is uniquely determined by its minimal non-faces. {Relation} (1) in Theorem~\ref{theo:Dan-Jur} {is} determined by the {data} of minimal non-faces of $\cK$.

{In this paper, we focus on toric manifolds over an $n$-cube with one vertex cut. Since they are closely related to toric manifolds over an $n$-cube, we shall briefly review them.}

\medskip
\noindent\textbf{Bott manifolds and their cohomology rings} {(see \cite{gr-ka94}, \cite{ma-pa08} for details).} A toric manifold over an $n$-cube {$I^n$} is known as a Bott manifold {or a Bott tower}. It can be obtained as the total space of an iterated $\CP^{1}$-bundle starting with a point, where each $\CP^{1}$-bundle is {the} projectivization of the Whitney sum of two complex line bundles. Let $\cB_n$ be the simplicial complex on $[2n]$ whose minimal non-faces are $\{i,n+i\}$ for $i=1,\ldots,n$. {Note that $\cB_n$ is isomorphic to the boundary complex of the {simplicial polytope dual to $I^n$}. The fan associated with a Bott manifold is of the form $\Delta=(\cB_n,\cV)$ {such that} the integer square matrix $A$ defined by
\[
(\v_{n+1},\dots,\v_{2n})=-(\v_1,\dots,\v_n)A\qquad (\v_i=\cV(i))
\]
is unipotent lower triangular. Since the fan $\Delta$ is determined by the matrix $A$, we may denote the Bott manifold $X(\Delta)$ by $X(A)$. Then, setting $x_i:=\mu_{n+i}$ for $i=1,\dots,n$ in Theorem~\ref{theo:Dan-Jur}, one can see that
\begin{align}\label{eq: cohomology ring of a Bott mfd}
H^{\ast}(X(A))\cong\Z[x_{1},\ldots,x_{n}]/\Big(x_{i}\big(x_i+\sum_{j=1}^{i-1} a_{ij}x_{j}\big) \mid 1\leq i\leq n\Big)
\end{align}
where $a_{ij}$ denotes the $(i,j)$-entry of $A$.}

{We note that if $\rho$ is an automorphism of $\cB_n$, then $(\cB_n,\cV\circ\rho)$ defines the same fan as $(\cB_n,\cV)$ and the matrix $A$ will change into the matrix $A_\rho$ defined by
\[
(\v_{\rho(n+1)},\dots,\v_{\rho(2n)})=-(\v_{\rho(1)},\dots,\v_{\rho(n)})A_\rho.
\]
The group $\Aut(\cB_n)$ of automorphisms of $\cB_n$ is generated by two types of elements. One is a permutation $\sigma$ on $[n]$. Indeed, since the minimal non-faces of $\cB_n$ are $\{i, n+i\}$ for $i=1,\dots,n$, $\sigma$ induces an automorphism of $\cB_n$ by sending $n+i$ to $n+\sigma(i)$. The other is a permutation on $[2n]$ which sends $i$ to either $i$ or $n+i$ for each $i=1,\dots,n$.}

\section{Classification of fans} \label{sec:Classification of fans}
{Let $\vcd$ be} a simplicial complex on $[2n+1]$ $(n\ge 2)$ whose minimal non-faces are $\{i,n+i\}$, $\{i,2n+1\}$ $(i=1,\ldots,n)$, and $\{n+1,\ldots,2n\}$. Note that $\vcd$ is isomorphic to the boundary complex of the simplicial polytope {dual to $\vt(I^n)$} an $n$-cube with one vertex cut, see Figure~\ref{fig:vc3}. Therefore, {the fan associated with a toric manifold over $\vt(I^n)$ is over $\vcd$. Conversely, it turns out that the toric manifold associated with a fan over $\vcd$ is over $\vt(I^n)$, see Remark~\ref{rema:over vcd} in Appendix.}

\begin{figure}[h]
\begin{subfigure}[c]{0.45\textwidth}
\begin{center}
\begin{tikzpicture}[scale=0.5]
    \filldraw[inner color=yellow, outer color=yellow!50] (0,0)--(3,0)--(4.5,2)--(4.5,5)--(1.5,5)--(0,3)--cycle;
    \draw (0,3)--(2,3)--(3,2)--(3,0);
    \draw (2,3)--(3.5,11/3)--(3,2);
    \draw (3.5,11/3)--(4.5,5);
    \draw[dotted] (1.5,5)--(1.5,2)--(0,0);
    \draw[dotted] (1.5,2)--(4.5,2);
    \draw (2.8,2.8) node{$7$};
    \draw (1.5,1.5) node{$5$};
    \draw (3.75,2.5) node{$4$};
    \draw (2.5,4) node{$6$};
    \draw (2.15,1) node{\textcolor{gray}{$3$}};
    \draw (2,3.5) node{\textcolor{gray}{$2$}};
    \draw (0.75,2.5) node{\textcolor{gray}{$1$}};
\end{tikzpicture}
\end{center}
\end{subfigure}
\begin{subfigure}[c]{0.45\textwidth}
\begin{center}
\begin{tikzpicture}[scale=.5]
    \filldraw[fill=yellow!50] (0,0)--(2.5,-3)--(5,1)--(2.5,3)--cycle;
    \draw (0,0)--(3,-1)--(5,1);
    \draw (2.5,3)--(3,-1)--(2.5,-3);
    \draw (3,-1)--(3.35,1.25)--(2.5,3);
    \draw (3.35,1.25)--(5,1);
    \draw[dotted] (0,0)--(2,2)--(5,1);
    \draw[dotted] (2.5,3)--(2,2)--(2.5,-3);
    \draw (-0.3,0) node{$1$};
    \draw (2.5,-3.3) node{$3$};
    \draw (5.3,1) node{$4$};
    \draw (2.5,3.3) node{$6$};
    \draw (3.1,-1.3) node{$5$};
    \draw (2,2) node{$2$};
    \draw (3.55,0.95) node{$7$};
\end{tikzpicture}
\end{center}
\end{subfigure}
\caption{$\mathrm{vc}(I^3)$ and $\mathcal{C}_3=\partial(\mathrm{vc}(I^3))^\ast$}\label{fig:vc3}
\end{figure}

\begin{rema} \label{rema:n3}
When $n\ge 3$, the vertex of degree $n$ in $\vcd$ is only $\{2n+1\}$, and $\{1,2,\dots,n\}$ is the unique maximal simplex of $\vcd$ which does not {intersect with the link of} the vertex $\{2n+1\}$. Therefore, any automorphism of $\vcd$ fixes $\{2n+1\}$ and preserves $\{1,\dots,n\}$. Since the minimal non-faces of $\vcd$ are $\{i,n+i\}$ for $i=1,\dots,n$, any automorphism of $\vcd$ is induced from a permutation on $[n]$ by sending $n+i$ to $n+\sigma(i)$ when $n\ge 3$.
\end{rema}
Let $(\vcd,\cV)$ be a complete nonsingular fan.  Since $\{1,\dots,n\}$ is a simplex of $\vcd$, $\{\v_1,\dots,\v_n\}$ is a basis of $\Z^n$.
We express the remaining vectors $\v_{n+1},\dots,\v_{2n}$, and $\v_{2n+1}$ as linear combinations of the basis $\{\v_1,\dots,\v_n\}$ as
\begin{equation} \label{eq:classAb}
(\v_{n+1},\dots,\v_{2n})=-(\v_1,\dots,\v_n)A,\text{ {and} } \v_{2n+1}=-(\v_1,\dots,\v_n)\b,
\end{equation}
where $A$ is an integer square matrix of size $n$ and $\b$ is an integer column vector of size $n$.
{The matrix $A$ and the vector $\b$ depend on a permutation $\sigma$ on $[n]$ as follows. Since $\{\v_{\sigma(1)},\dots,\v_{\sigma(n)}\}$ is a basis of $\Z^n$, an integer square matrix $A_\sigma$ and an  integer column vector $\b_\sigma$ are defined by
\begin{equation} \label{eq:classAbsigma}
\begin{split}
(\v_{n+\sigma(1)},\dots,\v_{n+\sigma(n)})&=-(\v_{\sigma(1)},\dots,\v_{\sigma(n)})A_\sigma,\text{ and}\\ \v_{2n+1}&=-(\v_{\sigma(1)},\dots,\v_{\sigma(n)})\b_\sigma,
\end{split}
\end{equation}respectively.
Therefore, if $P_\sigma$ denotes the permutation matrix defined by
\[
(\v_{\sigma(1)},\dots,\v_{\sigma(n)})=(\v_1,\dots,\v_n)P_\sigma,
\]
then
\[
(\v_{n+\sigma(1)},\dots,\v_{n+\sigma(n)})=(\v_{n+1},\dots,\v_{2n})P_\sigma
\]
and plugging these into \eqref{eq:classAbsigma} and comparing the resulting {equation} with \eqref{eq:classAb}, we see that
\begin{equation} \label{eq:Asigma}
A_\sigma=P_{\sigma}^{-1}AP_\sigma,
\end{equation}in other words, the $(i,j)$-entry of $A_\sigma$ is equal to the $(\sigma(i),\sigma(j))$-entry of $A$. Combining~\eqref{eq:Asigma} with Remark~\ref{rema:n3} shows that when $n\ge 3$, the fan $\Delta$ determines the matrix $A$ up to conjugation {by} permutation matrices, in particular, the value of $\det A$ is an invariant of the fan $\Delta$ when $n\ge 3$.
One also notes that
\begin{equation*}
\text{the $i$th entry of $\b_\sigma$ is equal to the $\sigma(i)$th entry of $\b$.}
\end{equation*}
Motivated by this observation, we make the following definition.

\begin{defi} \label{defi:conjugate}
A pair $(A',\b')$ is conjugate to $(A,\b)$ if there is a permutation $\sigma$ on $[n]$ such that $(A',\b')=(A_\sigma,\b_\sigma)$.
\end{defi}

For a subset $I$ of $[n]$, we define
\[
i(I)= \begin{cases} n+i \quad&\text{if $i\in I$},\\
i\quad &\text{if $i\notin I$}.\end{cases}
\]
If $I\not=[n]$, then ${\widetilde{I}:=}\{1(I),\dots,n(I)\}$ is a simplex of $\vcd$, so $\{\v_{1(I)}, \dots,\v_{n(I)}\}$ forms a basis of $\Z^n$. Therefore, the matrix $A_I$ defined by
\[
(\v_{1(I)},\dots,\v_{n(I)})=-(\v_1,\dots,\v_n)A_I
\]
is unimodular when $I$ is a proper subset of $[n]$. If $I{\subsetneq} J\subsetneq[n]$ and $J\backslash I$ consists of only one element, then {$\cone(\widetilde{I})$ and $\cone(\widetilde{J})$} are adjacent, i.e., their intersection {$\cone(\widetilde{I}\cap\widetilde{J})$} is a codimension one face of each. This implies that $\det A_I$ and $\det A_{J}$ have different signs. We note $A_\emptyset=-E_n$ where $E_n$ denotes the identity matrix of size $n$, and if $I$ consists of a single element $\{i\}$, then $A_{\{i\}}$ is $-E_n$ with the {$i$th column} replaced by the {$i$th column} of $A$. Since $\det A_\emptyset=(-1)^n$ and $\det A_{\{i\}}=(-1)^{n-1}a_{ii}$, {where $a_{ii}$ is the $(i,i)$-entry of $A$,} and they have different signs, we have $a_{ii}=1$. In general, $\det A_I$ is the principal minor of $A$ associated with $I$ multiplied by $(-1)^{n-|I|}$. Then an inductive argument on $|I|$ shows that all proper principal minors of $A$ are $1$. This observation reminds us of the following lemma.
}

\begin{lemm}\cite[Lemma 3.3]{ma-pa08}\label{lem: proper principal minor}
{Let $R$ be a commutative ring with unit $1$ and $A$ be a square matrix of size $n$ with entries in $R$.}
Suppose that every proper principal minor of $A$ is equal to $1$.
If $\det A=1$, then A is conjugate by a permutation matrix to a unipotent lower triangular matrix, and otherwise to a matrix of the form
\begin{align}\label{eq: matrix A when det A is not equal to 1}
\begin{bmatrix}
1 & 0 & \dots &0& a_1 \\
a_2 & 1 & \dots &0& 0\\
\vdots& \vdots &\ddots &\vdots & \vdots \\
0 & 0 & \dots & 1 & 0\\
0 & 0 & \dots & a_n & 1
\end{bmatrix}
\end{align}
where $\det A=1+(-1)^{n-1}\prod_{i=1}^na_i$ and all $a_i$'s are nonzero since $\det A\not=1$.
\end{lemm}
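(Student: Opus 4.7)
My plan is to extract structural information about $A$ from the collection of principal-minor identities via M\"obius inversion, and then read off the conclusion via a directed-graph dichotomy on the sparsity pattern of $A$. From the $1$-element proper minors we get $a_{ii} = 1$ for all $i$. For any $J \subseteq [n]$, applying M\"obius inversion on the Boolean lattice to the identities $\det A_K = 1$ yields the derangement identity
\begin{equation*}
d(A_J) := \sum_{\sigma \in D(J)} \operatorname{sgn}(\sigma) \prod_{j \in J} a_{j,\sigma(j)} = \sum_{K \subseteq J} (-1)^{|J \setminus K|} \det A_K,
\end{equation*}
where $D(J)$ is the set of derangements of $J$. The hypothesis forces $d(A_J) = 0$ for every $1 \leq |J| < n$ (the right-hand side becomes an alternating sum of $1$'s) and $d(A_{[n]}) = \det A - 1$; in particular, the $|J|=2$ case is $a_{ij} a_{ji} = 0$ for all $i \neq j$.

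I would then form the directed graph $G$ on $[n]$ with arrow $j \to i$ whenever $a_{ij} \neq 0$, so that, over an integral domain, the $2 \times 2$ identity precludes $2$-cycles in $G$. A permutation matrix conjugates $A$ to a unipotent lower triangular matrix iff it realizes a topological sort of $G$, which exists iff $G$ is acyclic. The goal is the dichotomy: either $G$ is acyclic, or else $G$ consists of a single directed Hamiltonian cycle with no additional edges. In the acyclic case, the topological sort provides the permutation, and the derangement identity at $J = [n]$ forces $\det A = 1$ (its only potentially nonzero terms would require directed cycles in $G$). In the Hamiltonian case, a cyclic relabeling produces the form~\eqref{eq: matrix A when det A is not equal to 1}, direct expansion yields $\det A = 1 + (-1)^{n-1}\prod a_i$, and $\det A \neq 1$ forces each $a_i \neq 0$.

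The main obstacle is ruling out cycles of intermediate length $3 \leq k \leq n-1$. Let $C = (v_1 \to \cdots \to v_k \to v_1)$ be such a cycle of minimal length. Any chord in $G[V(C)]$ would, when combined with the appropriate portion of $C$, produce a strictly shorter cycle on a proper subset of $V(C)$, so minimality forces $G[V(C)]$ to consist of exactly the edges of $C$. In the identity $d(A_{V(C)}) = 0$, derangements with a transposition factor vanish by $a_{ij}a_{ji}=0$; any other derangement has cycle factors of length $\geq 3$, each of which induces a directed cycle in $G[V(C)]$ of the same length. Since $G[V(C)]$ equals the single $k$-cycle $C$, the only nonzero-contributing derangement is $\sigma_C : v_i \mapsto v_{i-1}$, so $d(A_{V(C)}) = \pm \prod_{i} a_{v_i, v_{i-1}}$, a product of nonzero entries. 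Concluding that this product is itself nonzero --- and so contradicts $d(A_{V(C)}) = 0$ --- requires $R$ to have no zero divisors; indeed the lemma already fails over $\Z/6$ at $n = 2$ (take $a = 2$, $b = 3$). Thus the argument uses in an essential way that $R$ is an integral domain, which covers the setting $R = \Z$ relevant to this paper.
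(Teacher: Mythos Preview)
This lemma is quoted from \cite[Lemma~3.3]{ma-pa08} and the present paper gives no proof of it, so there is nothing here to compare against; I have assessed your argument on its own merits.

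Your proof is correct and clean. The M\"obius-inversion step is exactly right: since $a_{ii}=1$ one has $\det A_J=\sum_{K\subseteq J}d(A_K)$, and inversion together with the hypothesis gives $d(A_J)=0$ for $0<|J|<n$ and $d(A_{[n]})=\det A-1$. The directed-graph dichotomy is the heart of the matter, and your minimal-cycle argument is sound: a chord of a minimal $k$-cycle $C$ closes a strictly shorter cycle, so $G[V(C)]=C$; then the only derangement of $V(C)$ with nonvanishing weight is the $k$-cycle itself, giving $d(A_{V(C)})=\pm\prod_i a_{v_i,v_{i-1}}\neq 0$, contradicting $d(A_{V(C)})=0$. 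The two ends of the dichotomy (acyclic $G$; $G$ a single Hamiltonian cycle with no extra edges) match the two alternatives in the lemma, and in the acyclic case every term of $d(A_{[n]})$ vanishes, forcing $\det A=1$.

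Your remark that the proof genuinely needs $R$ to be an integral domain, with the $\Z/6$ counterexample
\[
A=\begin{bmatrix}1&2\\3&1\end{bmatrix}
\]
(all proper principal minors equal $1$, $\det A=1-6=1$ in $\Z/6$, yet $A$ is not permutation-conjugate to a unipotent triangular matrix), is a valid correction to the lemma as stated. Since the paper only invokes the lemma with $R=\Z$, none of its results are affected.
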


{Applying Lemma~\ref{lem: proper principal minor} with $R=\Z$ to our matrix $A$, we may assume that $A$ is of the form in Lemma~\ref{lem: proper principal minor}. {Based on this understanding, we can characterize} the pairs $(A,\b)$ obtained from complete nonsingular fans over $\vcd$. }

\begin{prop}\label{prop: classification of A and b}
{Let $\vcd$ be as above and $(A,\b)$ be a pair which defines a fan over $\vcd$ by \eqref{eq:classAb}. Then fans over $\vcd$ are classified into four types according to the values of $\det A$. In the following, $\a_i$ denotes the $i$th column vector of $A$ for $i=1,\dots,n$.}

\bigskip
\noindent
{\bf Type 0} $(\det A=0)$
\begin{equation*}
\begin{bmatrix}
1 & 0 & \dots &0& -1 \\
-1 & 1 & \dots &0& 0\\
\vdots& \ddots &\ddots &\vdots & \vdots \\
0 & 0 & \dots & 1 & 0\\
0 & 0 & \dots & -1 & 1
\end{bmatrix},\qquad
\b=
\begin{bmatrix}
b_1\\
b_2\\
\vdots\\
\vdots\\
b_n
\end{bmatrix}
~\text{with}~\sum_{i=1}^nb_i=1.
\end{equation*}

\medskip
\noindent
{\bf Type 1} $(\det A=1)$
\begin{equation*}
\begin{bmatrix}
1 & 0 & \dots &0& 0 \\
a_{2,1} & 1 & \dots &0& 0\\
\vdots& \vdots &\ddots &\vdots & \vdots\\
a_{n-1,1} & a_{n-1,2} & \dots & 1 & 0&\\
a_{n,1} & a_{n,2} & \dots & a_{n,n-1} & 1
\end{bmatrix},\qquad\b=\sum_{i=1}^n\a_i.
\end{equation*}

\medskip
\noindent
{\bf Type 2} $(\det A=2)$
\begin{equation*}
\begin{bmatrix}
1 & 0 & \dots &0& a_1 \\
a_2 & 1 & \dots &0& 0\\
\vdots& \vdots &\ddots &\vdots & \vdots \\
0 & 0 & \dots & 1 & 0\\
0 & 0 & \dots & a_n & 1
\end{bmatrix},\qquad\b=\frac{1}{2}\sum_{i=1}^n\a_i,
\end{equation*}
where $a_i=\pm 1$ {for $i=1,\dots,n$} and the number of $a_i$'s equal to $1$ is odd.

\medskip
\noindent
{\bf Type 3} $(\det A\not=0,1,2)$
\begin{equation*}
\begin{bmatrix}
1 & 0 & \dots &0& a \\
-1 & 1 & \dots &0& 0\\
\vdots& \vdots &\ddots &\vdots & \vdots \\
0 & 0 & \dots & 1 & 0\\
0 & 0 & \dots & -1 & 1
\end{bmatrix}\qquad a\not=0,\pm 1,
\qquad
\b=\frac{1}{\det A}\sum_{i=1}^n\a_i
=
\begin{bmatrix}
1\\
0\\
\vdots\\
\vdots\\
0
\end{bmatrix}.
\end{equation*}
\end{prop}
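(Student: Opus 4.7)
The plan is to extract algebraic constraints on $(A,\b)$ from the nonsingularity of the maximal cones and from the wall-completeness of the fan, and then to separate four subcases according to the value of $\det A$.

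As explained immediately before the statement, the Type~A facets together with nonsingularity already force every proper principal minor of $A$ to equal $1$, so Lemma~\ref{lem: proper principal minor} normalizes $A$, after conjugation by a permutation matrix on $[n]$ (which by Remark~\ref{rema:n3} corresponds to an automorphism of $\vcd$), to be either a unipotent lower triangular matrix when $\det A=1$ or the cyclic matrix~\eqref{eq: matrix A when det A is not equal to 1} when $\det A\ne1$, with all $a_i\ne0$ and $\det A=1+(-1)^{n-1}\prod_i a_i$. I would then use the Type~B facets to pin down $\b$. Nonsingularity of $\{n+1,\dots,\widehat{n+k},\dots,2n,2n+1\}$ gives $|\det A^{(k)}|=1$, where $A^{(k)}$ is $A$ with its $k$-th column replaced by $\b$, while the adjacency to the Type~A facet $F_{[n]\setminus\{k\}}$ across their common codimension-one wall fixes the sign as $\det A^{(k)}=+1$. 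Since $\det A^{(k)}=(\mathrm{adj}(A)\,\b)_k$, these $n$ conditions package into the single linear system
\[
\mathrm{adj}(A)\cdot\b=\1,\qquad\1:=(1,\dots,1)^{T}.
\]

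Now I would split into cases. When $\det A\ne0$, Cramer's rule gives $\b=\tfrac{1}{\det A}\sum_{i=1}^{n}\a_i$; since the $i$-th row sum of $A$ in the cyclic form is $1+a_i$, integrality of $\b$ is equivalent to $\det A\mid 1+a_i$ for every $i$. For $\det A=1$ this is automatic, yielding Type~1 with $\b=\sum\a_i$. For $\det A=2$, divisibility forces each $a_i$ odd, which combined with $|\prod a_i|=|\det A-1|=1$ gives $a_i=\pm1$, and the sign condition $\prod a_i=(-1)^{n-1}$ translates into an odd number of $+1$'s, producing Type~2. For $\det A=0$, the system is solvable only if $\1\in\mathrm{im}(\mathrm{adj}(A))=\ker(A)$; a direct recursion identifies $\ker A=\langle(u_i)_i\rangle$ with $u_i=(-1)^i\prod_{j\le i}a_j$, so requiring $u_i=1$ for every $i$ inductively forces $a_i=-1$ for every $i$, whereupon the remaining equation becomes $\sum_i b_i=1$, giving Type~0.

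The hardest subcase is Type~3, $\det A\ne0,1,2$, where I must show that up to cyclic conjugation exactly one $a_i$ differs from $-1$. A naive estimate $|\det A-1|=|\prod a_i|\ge(|\det A|-1)^{\#\{i:a_i\ne-1\}}$ handles large $|\det A|$, but the small values $\det A\in\{-3,-2,-1\}$ admit apparent counterexamples (for instance, $(a_1,a_2,a_3)=(2,2,-1)$ when $n=3$, $\det A=-3$) which satisfy every local condition yet fail the global completeness of the fan. To rule these out I would invoke the completeness of the star fan at each vertex $\v_{n+k}$: since the link of $n+k$ in $\vcd$ is isomorphic to $\cC_{n-1}$, the star fan is a complete nonsingular fan over that smaller complex, and the spurious tuples are detected by the fact that the rays obtained by projecting the $\v_i$ (for $i$ in the link of $n+k$) into $\Z^n/\langle\v_{n+k}\rangle$ fail to wind exactly once around the origin as one traverses the link. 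This last step is where I expect the proof to require the most care.
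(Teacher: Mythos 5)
Your setup and your treatment of Types 0, 1, and 2 track the paper's own proof closely: the paper likewise reduces to Lemma~\ref{lem: proper principal minor} via the principal-minor computation preceding the statement, derives $\tilde A\b=\1$ (your $\mathrm{adj}(A)\,\b=\1$) from the nonsingularity of the maximal cones containing $\v_{2n+1}$ with the sign fixed by adjacency/orientation, solves by Cramer's rule when $\det A\neq0$, and in the singular case extracts $a_2=\dots=a_n=-1$ from solvability of the system (the paper row-reduces $[\tilde A,\1]$, you identify $\ker A$; these are the same computation). Your observation that integrality of $\b=\frac{1}{\det A}\sum_i\a_i$ forces $\det A\mid 1+a_i$ is a genuine streamlining for part of Type~3: it disposes of $|\det A|\ge 4$ and of $\det A=3$ by pure arithmetic, whereas the paper handles all of Type~3 by one uniform induction.

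However, the cases $\det A\in\{-1,-2,-3\}$ that you flag are not a residual technicality --- they carry essentially all of the content of the Type~3 assertion --- and your proposed resolution is not yet an argument. ``Winding once around the origin'' is a two-dimensional notion; a single projection to $\Z^n/\langle\v_{n+k}\rangle$ produces a fan of dimension $n-1$, where no such criterion is available once $n\ge 4$. What is actually required (and what the paper does) is an induction on $n$: projecting the fan along $\a_i$ yields a complete nonsingular fan over $\cC_{n-1}$ whose matrix is again of the form \eqref{eq: matrix A when det A is not equal to 1} with entries $a_1,\dots,a_{i-1},-a_ia_{i+1},a_{i+2},\dots,a_n$ and the \emph{same} determinant, so applying the inductive hypothesis to all $n$ such projections pins down the $a_i$; and the base case $n=2$ is a concrete computation with the completeness relation $\sum_{i=1}^{5}c_i=3\cdot 5-12$ for the five rays of a fan over $\cC_2$, which yields $a_1a_2(a_1+1)(a_2+1)=0$ and hence, given $\det A=1-a_1a_2\neq0,1,2$, that exactly one of $a_1,a_2$ equals $-1$. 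Until you set up that induction and carry out the rank-two base case, configurations such as $(2,2,-1,\dots,-1)$ for $\det A=-3$, or the various $\pm1/\pm2$ and $\pm1/\pm3$ tuples for $\det A=-1,-2$, remain unexcluded; you have named the right tool but not supplied the step that does the work.
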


\begin{rema*}
A fan of Type 1 is a blow-up of a fan of a Bott manifold explained in the last section.
The matrix $A$ with {$a=-1,\,0,\text{ and }1$} in Type 3 is of Type 0, 1, and 2, respectively.
\end{rema*}

\begin{proof}[Proof of Proposition~\ref{prop: classification of A and b}]
Since each $n$-dimensional cone containing $\b$ is nonsingular, the determinant of the matrix obtained from $A$ by replacing $\a_{i}$ with $\b$ is $1$ {up to sign  but we {can} see that it is actually $1$ by taking orientations into account}, that is,
\begin{align}\label{eq:det a1...b...an is equal to 1}
\det[\a_1,\ldots,\a_{i-1},\b,\a_{i+1},\ldots,\a_n]=1,\qquad\text{for}~i=1,2,\ldots,n.
\end{align}
Note that the equality above is equivalent to $\sum_{k=1}^{n}b_{k}A_{ki}=1$, where $A_{ki}$ is the $(k,i)$ minor of $A$ {multiplied by $(-1)^{k+i}$}. Hence,
{if we denote by $\tilde A$ the square matrix whose $(i,k)$-entry is $A_{ki}$, then}
we get
\begin{align}\label{eq:Ab is equal to 1}
\tilde{A}\b=\1,
\end{align}
where
$\1=[1,\ldots,1]^T$. Note that $A\tilde{A}=(\det A)E_{n}$, where $E_{n}$ is the identity matrix of size $n$.

Suppose that $\det A=0$. Then
{$A$ is of \eqref{eq: matrix A when det A is not equal to 1} and}
$A\tilde{A}$ is the zero matrix of size $n$, so if we denote by $\r_i$ the $i$th row vector of $\tilde A$, then
we have
\begin{align*}
\r_{i}+a_{i}\r_{i-1}=0\quad\text{for }1\leq i\leq n,
\end{align*}
where $\r_{0}=\r_{n}$. Hence by adding $a_{i}$ times the $(i-1)$th row to the $i$th row {in} $[\tilde{A},\1]$ from $i=n$ to $i=2$, we get
\begin{align*}
\begin{bmatrix}
1&\ast&\ldots&\ast&1\\
0&0&\ldots&0&1+a_2\\
\vdots&\vdots&{\ddots}&\vdots&\vdots\\
0&0&\ldots&0&1+a_n
\end{bmatrix},
\end{align*}
where $\ast$'s are some {integers. On the other hand, since $\b$ is a solution of $\tilde{A}\x=\1$, we have $\rank[\tilde{A},\1]=\rank\tilde{A}$. Therefore}
$a_{2}=\cdots=a_{n}=-1$. Since $\det A=1+(-1)^{n-1}\prod_{i=1}^n a_i=0$, we also have $a_1=-1$. Hence, $\det[\a_{1},\ldots,\a_{n-1},\b]=\sum_{i=1}^{n}b_{i}$ and $\sum_{i=1}^{n}b_{i}=1$ by \eqref{eq:det a1...b...an is equal to 1}.

We now turn to the case $\det A\neq0$. Since $A\tilde{A}=(\det A) E_{n}$, we obtain $\b=\frac{1}{\det A}\sum_{i=1}^n\a_i$ from \eqref{eq:Ab is equal to 1} regardless of the types.
{When $\det A=1$, this together with Lemma~\ref{lem: proper principal minor} establishes Type 1 case.}

In Type 2, since $\det A=1+(-1)^{n-1}\prod_{i=1}^na_i=2$ {and $a_i$'s are integers}, we can see that $a_i=\pm 1$ {for $i=1,\dots,n$} and the number of $a_i$'s equal to $1$ is odd.

{In} Type 3, we prove the following by using induction {on} $n$;
\begin{quote}
$(\ast)$~The number of $a_i$'s equal to {$-1$} is $n-1$ if $\det A\neq0,1,2$.
\end{quote}
When $n=2$, {we have}
\[
[\v_1,\v_2,\v_3,\v_5,\v_4]={[-\e_1,-\e_2,\a_1,\b,\a_2]=}
\begin{bmatrix}
-1&0&1&b_1&a_1\\
0&-1&a_2&b_2&1
\end{bmatrix},
\]
{and}
\begin{align*}\label{eq: nonsingularity for n=2}
\det[\a_1,\b]=b_2-a_2b_1=1\text{ and }\det[\b,\a_2]=b_1-a_1b_2=1
\end{align*}
from \eqref{eq:det a1...b...an is equal to 1}. This means that $\v_1,\v_2,\v_3,\v_5,\v_4$ are arranged in counterclockwise, and any consecutive two vectors in them form a basis of {$\Z^2$.}  Therefore there exist integers $c_{1},\ldots,c_{5}$ satisfying the {equations}
\begin{align*}
&\v_1+\v_3=c_2\v_2,~\v_2+\v_5=c_3\v_3,\\
&\v_3+\v_4=c_4\v_5,~\v_5+\v_1=c_5\v_4,~\v_4+\v_2=c_1\v_1,
\end{align*}
{and an elementary computation shows that those integers} are as follows:
\begin{equation} \label{eq:cis}
c_1=-a_1,~c_2=-a_2,~c_3=b_1,~c_4=1-a_1a_2,~c_5=b_2.
\end{equation}
{The vectors} $\v_1,\v_2,\v_3,\v_5, \v_4$ are {primitive,} arranged in counterclockwise, and form a $2$-dimensional complete nonsingular fan, {so} the integers $c_{1},\ldots,c_{5}$ {must} satisfy the {equation}\footnote
{If primitive integer vectors $\w_1,\w_2,\ldots,\w_{d}=\w_{0}$ in $\Z^2$ $(d\ge 3)$ are arranged in counterclockwise and form a $2$-dimensional complete nonsingular fan, then we must have $\w_{i-1}+\w_{i+1}=c_{i}\w_{i}$ {with some integers $c_{i}$ for $1\leq i\leq d$} and $\sum_{i=1}^{d}c_{i}=3d-12$. See~\cite[pages 42--44]{fult93}.}
\begin{align*}
\sum_{i=1}^5 c_i=3\times5-12.
\end{align*}
{This together with \eqref{eq:cis} shows that}
$a_1a_2(a_1+1)(a_2+1)=0$.
Combining this equality with the condition $\det A=1-a_1a_2\neq0,1,2$, we see that exactly one of $a_1$ and $a_2$ is equal to $-1$, and the other is different from $0,\pm1$. Hence, statement~$(\ast)$ holds for $n=2$.

Assume that statement $(\ast)$ holds for $n-1$ for $n\geq3$. Let $\Delta_1$ be the fan obtained {from $\Delta$} by projecting onto $\Z^n/\s<\a_1>$. Then $\Delta_{1}$ is determined by {the} $(n-1)\times (n-1)$ matrix
\begin{equation*}\label{page:projection}
A_1=
\begin{bmatrix}
1&0&\ldots&0&-a_1a_2\\
a_3&1&\ldots&0 &0\\
\vdots&\vdots&\ddots&\vdots&\vdots\\
0&0&\ldots&1&0\\
0&0&\ldots&a_n&1
\end{bmatrix}.
\end{equation*}
Since $\det A_1=\det A\neq0,1,2$, {it follows} from the induction hypothesis {that} the number of $(-1)$'s in $\{-a_1a_2,a_3,\ldots,a_n\}$ is equal to $n-2$. Repeating this procedure for $\a_2,\ldots,\a_n$, one sees that the number of $(-1)$'s in $$\{a_1,\ldots,a_{i-1},-a_ia_{i+1},a_{i+2},\ldots,a_n\}$$ is equal to $n-2$ for $i=1,\ldots,n$, where $a_{n+1}=a_1$. This {implies} statement~$(\ast)$.

{We note that any cyclic permutation on $[n]$ preserves the form of the matrix $A$ in \eqref{eq: matrix A when det A is not equal to 1} and permutes $a_i$'s cyclically, so we may assume that the entry $a$ $(\not=0,\pm 1)$ in our matrix $A$ is placed in the $(1,n)$-entry through a cyclic permutation. This establishes Type 3 case.}
\end{proof}

{Suppose that $n\ge 3$. Then a pair $(A,\b)$ is associated with a complete nonsingular fan over $\vcd$ through \eqref{eq:classAb} but it is defined up to conjugation (see Definition~\ref{defi:conjugate}) as observed before. By Proposition~\ref{prop: classification of A and b}, we may assume that the pair $(A,\b)$ is one of the form in the proposition. On the other hand, it is not difficult to see that the pair $(A,\b)$ in Proposition~\ref{prop: classification of A and b} defines a complete nonsingular fan over $\vcd$ through \eqref{eq:classAb} up to isomorphism. However, it happens that two different pairs $(A,\b)$ and $(A',\b')$ define isomorphic fans over $\vcd$. For instance, as remarked at the end of the proof of Proposition~\ref{prop: classification of A and b}, any cyclic permutation on $[n]$ preserves the form of the matrix $A$ in \eqref{eq: matrix A when det A is not equal to 1} and permutes $a_i$'s cyclically.

The following theorem classifies complete nonsingular fans over $\vcd$ up to isomorphism, in other words, toric manifolds over {$\mathrm{vc}(I^n)$} up to variety isomorphism, in terms of the pairs $(A,\b)$.}

\begin{theo}\label{theo:classification as varieties}
{Suppose $n\ge 3$. Then pairs $(A,\b)$ and $(A',\b')$ in Proposition~\ref{prop: classification of A and b} define isomorphic fans over $\vcd$ if and only if they are of the same type in Proposition~\ref{prop: classification of A and b} and}
\begin{enumerate}
\item there is an integer $k\in[n]$ such that $b'_i=b_{i+k}$ for every $i\in [n]$ in Type 0;
\item {there is a permutation matrix $P$ such that $A'=P^{-1}AP$ in Type 1;}
\item there is an integer $k\in[n]$ such that $a'_i=a_{i+k}$ for every $i\in [n]$ in Type 2; and
\item $(A,\b)=(A',\b')$ in Type 3,
\end{enumerate}
{where the indices in (1) and (3) above are taken modulo $n$.}
\end{theo}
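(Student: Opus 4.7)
The plan is to translate fan isomorphism into an equivalence on pairs $(A,\b)$ governed by permutations of $[n]$, and then, in each of the four types of Proposition~\ref{prop: classification of A and b}, to determine which permutations preserve the chosen normal form.

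First, by Remark~\ref{rema:n3} every automorphism of $\vcd$ (for $n\ge 3$) is induced by a permutation $\sigma\in S_n$ acting via $i\mapsto\sigma(i)$, $n+i\mapsto n+\sigma(i)$, and fixing $2n+1$. Combining this with the definition of fan isomorphism and the discussion of $(A_\sigma,\b_\sigma)$ preceding Definition~\ref{defi:conjugate}, two pairs $(A,\b)$ and $(A',\b')$ arising from complete nonsingular fans over $\vcd$ define isomorphic fans if and only if they are conjugate in the sense of Definition~\ref{defi:conjugate}, i.e. $(A',\b')=(A_\sigma,\b_\sigma)$ for some $\sigma\in S_n$, where $A_\sigma=P_\sigma^{-1}AP_\sigma$ and $(\b_\sigma)_i=b_{\sigma(i)}$. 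Since $\det A$ is invariant under this conjugation, the two pairs must lie in the same type of Proposition~\ref{prop: classification of A and b}. Thus the theorem reduces to computing, within each type, the stabilizer of the normal form under permutation conjugation.

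For Type 0, the nonzero off-diagonal entries of $A$ occupy positions $(2,1),(3,2),\dots,(n,n-1),(1,n)$, all equal to $-1$, forming a directed $n$-cycle. Since the $(i,j)$-entry of $A_\sigma$ is $a_{\sigma(i),\sigma(j)}$, the permutation $\sigma$ must send this directed cycle to itself in an orientation-preserving way, which forces $\sigma$ to be a cyclic rotation $i\mapsto i+k\pmod n$; then $(\b_\sigma)_i=b_{i+k}$, giving (1). Type 2 has the same directed-cycle pattern but with weights $a_i\in\{\pm1\}$ on the edges, so the same argument restricts $\sigma$ to a cyclic rotation and additionally forces the cyclic sequences of weights to match, yielding $a'_i=a_{i+k}$; since $\b=\tfrac12\sum\a_i$ is determined by $A$, no separate condition on $\b$ appears, giving (3). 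For Type 3, the directed cycle has asymmetric weights—the subdiagonal entries are $-1$ while the $(1,n)$-entry is $a\ne-1$—so $\sigma$ must fix the position $(1,n)$, whence $\sigma(1)=1$ and $\sigma(n)=n$; inspecting the constraint from the $(2,1)$-entry then forces $\sigma(2)=2$, and an easy induction along the subdiagonal gives $\sigma=\mathrm{id}$, proving uniqueness of the normal form and hence (4). For Type 1, the vector $\b=\sum\a_i$ is already determined by $A$, so the equivalence reduces to $A'=P^{-1}AP$ for some permutation matrix $P=P_\sigma$, which is exactly (2).

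The main obstacle is the combinatorial verification in Types~0 and~2 that only cyclic rotations preserve the directed-cycle pattern of $A$; in particular one must rule out reflections, which would reverse the orientation of the cycle (mapping a subdiagonal entry to a superdiagonal one) and thus destroy the normal form. Once this rigidity is established, the rest of the argument is bookkeeping from Definition~\ref{defi:conjugate} and the explicit form of $\b$ in each type.
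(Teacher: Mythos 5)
Your proposal is correct and follows essentially the same route as the paper: reduce fan isomorphism to conjugation of pairs $(A,\b)$ by permutations of $[n]$ (via Remark~\ref{rema:n3} and Definition~\ref{defi:conjugate}), note that $\det A$ and hence the type is preserved, and then identify in each type which permutations preserve the normal form. Your directed-cycle argument for why $\sigma$ must be a cyclic rotation in Types 0, 2, and 3 (and the identity in Type 3) supplies a detail the paper merely asserts, but it is the same underlying observation.
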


\begin{proof}
As remarked before, $\{2n+1\}$ is the unique vertex of $\vcd$ {of} degree $n$ since $n\ge 3$, and hence $\{1,\dots,n\}$ is the unique maximal simplex of $\vcd$ which does not {intersect with the link of} the vertex $\{2n+1\}$. Therefore, the pair $(A,\b)$ is associated with a fan $\Delta$ over $\vcd$ {up to conjugation by \eqref{eq:classAb}}, and if two fans $\Delta$ and $\Delta'$ over $\vcd$ are isomorphic, then the induced isomorphism on $\vcd$ must preserve the simplices $\{1,\dots,n\}$, $\{n+1,\dots,2n\}$, and the vertex $\{2n+1\}$. This implies that $\Delta$ and $\Delta'$ are isomorphic if and only if the associated pairs $(A,\b)$ and $(A',\b')$ are conjugate.
In particular, the value of $\det A$ is invariant under isomorphisms of fans, which implies the first statement in the theorem.

If two pairs $(A,\b)$ and $(A',\b')$ of Type 0 in Proposition~\ref{prop: classification of A and b} are conjugate, then there is a permutation $\sigma$ on $[n]$ such that $A'=A_{\sigma}$ and $\b'=\b_{\sigma}$ (see Definition~\ref{defi:conjugate}). Here the $(i,j)$-entry of $A_\sigma$ is the $(\sigma(i),\sigma(j))$-entry of $A$. Since both $A'=A_\sigma$ and $A$ are the matrix of Type~0, $\sigma$ must be a cyclic permutation. Therefore (1) for Type~0 follows because the $i$th entry of $\b_\sigma=\b'$ is {the} $\sigma(i)$th entry of $\b$.

In the other types, the vector $\b$ is determined by $A$. Since $A_\sigma$ is conjugate to $A$ by the permutation matrix associated with the permutation $\sigma$, (2) for Type 1 follows. As for Types 2 and 3, the permutation $\sigma$ must be a cyclic permutation similarly to Type 0. This proves (3) for Type 2. As for Type 3, the cyclic permutation $\sigma$ must be the identity, which proves (4) for Type 3.
\end{proof}

\begin{rema*}
By (3) in Theorem~\ref{theo:classification as varieties}, the number of isomorphism classes of fans of dimension $n$ in Type 2 is the number of ordered $n$ sets of $1$ and $-1$ with {an odd number of} $1$'s up to cyclic {permutation}.  The number is known in combinatorics {(as mentioned in Introduction)} as
\[
\frac{1}{2n}\sum_{\substack{d|n\\ d:\text{odd}}}\varphi(d)2^{n/d}
\]
where $\varphi$ denotes Euler's totient function. Since $\varphi(1)=1$, {the number is greater than or equal to} $2^{n-1}/n$, and the equality holds if and only if $n$ is a power of $2$.
\end{rema*}

\section{{General properties of cohomology rings}} \label{sec:Properties of cohomology rings of the toric manifolds.}
As observed in the previous section, a fan $\Delta=(\vcd,\cV)$ is determined by a pair $(A,\b)$, so we shall denote the toric manifold $X(\Delta)$ by $X(A,\b)$. In this section, we study general properties of the cohomology ring $H^*(X(A,\b))$.

{We set $x_i:=\mu_{n+i}$ for $i=1,\dots,n$ and $x:=\mu_{2n+1}$ in Theorem~\ref{theo:Dan-Jur}, i.e., $x_{1},\ldots,x_{n},x$ correspond to $\a_{1},\ldots,\a_{n},\b$ respectively. Then one can see that}
\begin{align*}\label{eq: cohomology ring of X(A,b)}
H^{\ast}(X(A,\b))\cong\Z[x_{1},\ldots,x_{n},x]/\cI(A,\b),
\end{align*}
where $\cI(A,\b)$ is the ideal generated by $ \prod_{i=1}^{n}x_{i}$,
\begin{equation}\label{eq: homo poly related to -i}
x_{i}(\sum_{j=1}^{n}a_{ij}x_{j}+b_{i}x), {\text{ and }} x(\sum_{j=1}^{n}a_{ij}x_{j}+b_{i}x)\quad (i=1,\ldots,n),
\end{equation}
where $a_{ij}$ denotes the $(i,j)$-entry of $A$. Note that since $a_{ii}=1$, the former relation in \eqref{eq: homo poly related to -i} shows that $x_i^2$ can be expressed as a linear combination of $x_ix_j$ $(1\le j\not=i\le n)$ and $xx_i$ {for $i=1,\dots,n$}.
\begin{lemm}\label{lem: properties of cohomology class of deg 2}
The elements $x_1,\ldots,x_n,x$ of $H^2(X(A,\b))$ satisfy the following.
\begin{enumerate}
\item $xx_1=xx_2=\cdots=xx_n$, and
\item $x^2=-(\det A)xx_i $.
\end{enumerate}
\end{lemm}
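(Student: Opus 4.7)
The plan is to derive a single master identity by combining the Danilov--Jurkiewicz linear relations with the minimal non-face relations $x\mu_i=0$ (which come from $\{i,2n+1\}\notin\vcd$), and then to specialize it by choosing $\u$ to be the dual basis vectors of carefully chosen $\Z$-bases of $\Z^n$. For any $\u\in\Z^n$, Danilov--Jurkiewicz gives $\sum_{k=1}^{2n+1}\langle\u,\v_k\rangle\mu_k=0$. Multiplying by $x$ and using $x\mu_i=0$ for $i=1,\dots,n$ kills all $\mu_i$-terms, leaving the master identity
\[
\sum_{k=1}^{n}\langle\u,\v_{n+k}\rangle\,xx_k+\langle\u,\v_{2n+1}\rangle\,x^2=0,\qquad\forall\,\u\in\Z^n,
\]
which I will use throughout. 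The whole proof then reduces to smart choices of $\u$.

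For (2), I would fix $i$ and use the $\Z$-basis $\{\v_{n+k}\}_{k\neq i}\cup\{\v_{2n+1}\}$ of $\Z^n$ (a basis because $(\{n+1,\dots,2n\}\setminus\{n+i\})\cup\{2n+1\}$ is a maximal simplex of $\vcd$ and the fan is nonsingular). Taking $\u^{(i)}$ dual to $\v_{2n+1}$ in this basis, the master identity collapses to $\langle\u^{(i)},\v_{n+i}\rangle\,xx_i+x^2=0$, so everything reduces to computing one pairing. By Cramer's rule,
\[
\langle\u^{(i)},\v_{n+i}\rangle=\frac{\det[\v_{n+1},\dots,\v_{n+i},\dots,\v_{2n}]}{\det[\v_{n+1},\dots,\v_{2n+1},\dots,\v_{2n}]}=\frac{(-1)^n\det A}{(-1)^n\cdot 1}=\det A,
\]
where the denominator equals $(-1)^n$ thanks to the identity \eqref{eq:det a1...b...an is equal to 1}. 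This yields $x^2=-(\det A)\,xx_i$, which is (2).

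For (1), when $\det A\neq0$ the statement is immediate from (2) since $-x^2/\det A$ does not depend on $i$. To handle Type 0 and give a uniform argument, I would fix $i\neq j$ and take $\u^{(i,j)}$ dual to $\v_{n+j}$ in the same basis $\{\v_{n+k}\}_{k\neq i}\cup\{\v_{2n+1}\}$. The master identity then becomes $xx_j+\langle\u^{(i,j)},\v_{n+i}\rangle\,xx_i=0$, and an analogous Cramer's rule computation shows $\langle\u^{(i,j)},\v_{n+i}\rangle=-1$: after a single swap of columns $i$ and $j$, the numerator determinant turns into $\det[\a_1,\dots,\a_{j-1},\b,\a_{j+1},\dots,\a_n]=1$, so the numerator is $(-1)^{n+1}$ while the denominator is still $(-1)^n$. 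Hence $xx_i=xx_j$.

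The only non-routine step is the two Cramer-quotient evaluations, both of which collapse cleanly to identity \eqref{eq:det a1...b...an is equal to 1}; no type-by-type case analysis is needed, and the nonsingularity of the appropriate simplices of $\vcd$ does all the structural work.
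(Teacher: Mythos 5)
Your proof is correct, and it takes a genuinely different route from the paper's. The paper proves this lemma by a four-way case analysis, substituting the explicit normal forms of $(A,\b)$ from Proposition~\ref{prop: classification of A and b} into the relations \eqref{eq: homo poly related to -i} and manipulating them type by type (the Type~2 case being the most delicate, requiring the parity argument on the blocks $I_1,\dots,I_m$). You instead observe that the relations $x(\sum_j a_{ij}x_j+b_ix)=0$ are just the specializations of the single identity $\sum_{k=1}^{n}\langle\u,\v_{n+k}\rangle xx_k+\langle\u,\v_{2n+1}\rangle x^2=0$ to the dual basis of $\{\v_1,\dots,\v_n\}$, and that specializing instead to the dual basis of $\{\v_{n+k}\}_{k\ne i}\cup\{\v_{2n+1}\}$ --- legitimate because $(\{n+1,\dots,2n\}\setminus\{n+i\})\cup\{2n+1\}$ is a maximal simplex of $\vcd$ --- isolates exactly the terms you want; the two Cramer quotients then reduce to the identity \eqref{eq:det a1...b...an is equal to 1}, which the paper establishes for every complete nonsingular fan over $\vcd$ before any type distinction is made. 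This buys uniformity and brevity, and makes transparent that the lemma depends only on \eqref{eq:det a1...b...an is equal to 1} rather than on the classification of normal forms; the paper's computation is more elementary and stays entirely inside the generators-and-relations presentation that is reused in later sections. One cosmetic remark: your parenthetical derivation of (1) from (2) in the case $\det A\ne 0$ implicitly divides by $\det A$ in $H^4$, which is justified only because $H^4(X(A,\b))$ is torsion-free; since your second, uniform argument for (1) (dualizing against $\v_{n+j}$ and getting the ratio $-1$ from the column swap) covers all types including $\det A=0$ without this issue, nothing is lost.
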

\begin{proof}
In this proof, the indices of $x_i$'s are taken modulo $n$ as usual. We take four cases according to the types of $A$.

In Type 0 $(\det A=0)$,
\begin{equation*}
a_{ij}=
\begin{cases} 1&\text{if }i=j,\\
-1&\text{if }j\equiv i-1 \pmod{n},\\
0&\text{otherwise},
\end{cases}
\qquad \text{and}\qquad \sum_{i=1}^{n}b_{i}=1.
\end{equation*}
Hence, we have
\begin{align} \label{eq:3-1}
x(x_i-x_{i-1}+b_ix)=0~\text{for}~i=1,\ldots,n.
\end{align}
Summing up the equations in \eqref{eq:3-1} over $i=1,\dots, n$, we obtain $(\sum_{i=1}^{n}b_{i})x^{2}=0$, and hence $x^{2}=0$ since $\sum_{i=1}^nb_i=1$. This proves (2). Plugging $x^{2}=0$ into \eqref{eq:3-1}, we obtain (1).

In Type 1 $(\det A=1)$, the matrix $A$ is a unipotent lower triangular matrix and the vector $\b$ is the sum of {all} the column vectors of $A$, so $b_i=1+\sum_{j=1}^{i-1}a_{ij}$ for $i=1,\ldots,n$. Hence, it follows from \eqref{eq: homo poly related to -i} that we have
\begin{align*}
x
\left(
x_i+\sum_{j=1}^{i-1}a_{ij}x_j+(1+\sum_{j=1}^{i-1}a_{ij})x
\right)=0\quad\text{ for $i=1,\dots,n$}.
\end{align*}
{Rewriting these equations, we have
\begin{equation*} \label{eq:xxi}
(xx_i+x^2)+\sum_{j=1}^{i-1}a_{ij}(xx_j+x^2)=0\quad \text{for $i=1,\dots,n$}.
\end{equation*}
Taking $i=1$ above, we obtain $x^2+xx_1=0$. Then, taking $i=2$ above and using $x^2+xx_1=0$, we obtain $x^2+xx_2=0$. Repeating this argument, we see that $x^2+xx_i=0$ for any $i$}, which proves both (1) and (2) for Type 1.

In Type 2 $(\det A=2)$, we have
\begin{equation} \label{eq:aij}
a_{ij}=
\begin{cases} 1&\text{if }i=j,\\
a_i&\text{if }j\equiv i-1 \pmod{n},\\
0&\text{otherwise},
\end{cases} \qquad\text{and}\qquad b_i=(1+a_i)/2,
\end{equation}
where $a_i=\pm 1$ {for $i=1,\dots,n$}. We set $I=\{i\mid a_i=1\}$. The cardinality $|I|$ of $I$ is odd from Proposition \ref{prop: classification of A and b}. It follows from \eqref{eq: homo poly related to -i} and \eqref{eq:aij} that
\begin{align}
x(x_{i-1}+x_i+x)&=0 \text{ for }i\in I,\text{ and} \label{eq:3-2} \\
x(-x_{i-1}+x_i)&=0 \text{ for }i\not\in I. \label{eq:3-3}
\end{align}

Assume that $I=[n]$. Then \eqref{eq:3-2} holds for any $i$. Subtracting $x(x_{i-1}+x_{i}+x)=0$ from $x(x_{i}+x_{i+1}+x)=0$, we get $xx_{i-1}=xx_{i+1}$ for any $i=1,\dots,n$. This proves~(1) since $n=|I|$ is odd. Then \eqref{eq:3-2} implies $x^2=-2x_ix$, which is (2).

Now assume that $I=\{i_{1},\ldots,i_{m}\}\neq [n]$, {where $i_1<i_2<\dots<i_m$}. We divide the index set $[n]$ into $I_1,\ldots,I_m$ such that each $I_{k}=\{i_{k},i_k+1,\ldots,i_{k+1}-1\}$ satisfies
\[
a_{i_{k}}=1,~a_{i_{k}+1}=\cdots=a_{i_{k+1}-1}=-1 \quad\text{for $k=1,\dots,m$}.
\]
From~\eqref{eq:3-3}, we have $xx_{i-1}=xx_i$ for each $i\in I_{k}\setminus\{i_k\}$. Hence

\begin{equation} \label{eq:xxik}
xx_{i_k}={xx_{i_k+1}=}\cdots=xx_{i_{k+1}-1}\quad\text{for $k=1,\dots,m$}.
\end{equation}
On the other hand, since $i_k,i_{k+1}\in I$, it follows from \eqref{eq:3-2} that
\begin{equation} \label{eq:x2x}
x^2=-xx_{i_k-1}-xx_{i_k},\quad x^2=-xx_{i_{k+1}-1}-xx_{i_{k+1}}.
\end{equation}
{Since $xx_{i_{k-1}}=xx_{i_k-1}$ and $xx_{i_k}=xx_{i_{k+1}-1}$ from \eqref{eq:xxik},} it follows from \eqref{eq:x2x} that
\[
xx_{i_{k-1}}=xx_{i_{k+1}}\quad\text{for $k=1,\ldots,m$}.
\]
Since $m$ is odd, this shows that $xx_{i_k}$ is independent of $k$ and hence $x^2=-2xx_{i_k}$ by {\eqref{eq:xxik}} and \eqref{eq:x2x}. This implies both (1) and (2).

In Type 3 $(\det A\neq0,1,2)$, we have
\begin{equation*}
a_{ij}=
\begin{cases} 1&\text{if }i=j,\\
a&\text{if }(i,j)=(1,n),\\
-1&\text{if }j=i-1,\\
0&\text{otherwise},
\end{cases} \qquad \text{and}\qquad \b=\begin{bmatrix} 1\\
0\\
\vdots\\
0\end{bmatrix}.
\end{equation*}
Hence, it follows from \eqref{eq: homo poly related to -i} that
\[
x(x_1+ax_n+x)=0\quad\text{ and }\quad x(-x_{i-1}+x_i)=0\quad\text{for}~i=2,\ldots,n.
\]
The latter equations above mean (1). Then the former equation above implies that $x^2=-(1+a)xx_i$ for any $i$, proving (2) since $\det A=1+a$.
\end{proof}

{\begin{coro}\label{coro: cohomology class of deg n}
{In} the same situation as above, the following {statements} {hold}:
\begin{enumerate}
\item $x^{n}$ is $(\det A)^{n-1}$ times a generator of $H^{2n}(X(A,\b))$.
\item $x_ix_j$ $(1\leq i<j\leq n)$ and $xx_1$ form an additive basis of $H^4(X(A,\b))$.
\end{enumerate}
\end{coro}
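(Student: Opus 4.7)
The plan is to derive both statements from Lemma~\ref{lem: properties of cohomology class of deg 2} together with the quadratic relations $x_i(\sum_j a_{ij}x_j + b_i x)=0$, and to identify the resulting top-degree product geometrically with the Poincar\'e dual of a torus-fixed point.

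For (1), I first iterate the two identities of the lemma. Starting from $x^2=-(\det A)xx_1$ and multiplying by $x$ while repeatedly substituting the same identity, a straightforward induction on $n$ gives
\[
x^n \;=\; (-1)^{n-1}(\det A)^{n-1}\,xx_1^{n-1}.
\]
A second induction using $xx_i=xx_j$, with inductive step $xx_1^{k+1} = (xx_1)x_1^k = (xx_{k+1})x_1^k = (xx_1^k)x_{k+1}$, rewrites $xx_1^{n-1}$ as $xx_1x_2\cdots x_{n-1}$. It then remains to recognise this as a generator of $H^{2n}$. For this I will observe that $\{n+1,\dots,2n-1,2n+1\}$ contains none of the minimal non-faces $\{i,n+i\}$, $\{i,2n+1\}$, $\{n+1,\dots,2n\}$ of $\vcd$, so it is a maximal simplex; by nonsingularity of the fan, $\v_{n+1},\dots,\v_{2n-1},\v_{2n+1}$ form a $\Z$-basis, and hence $\mu_{n+1}\cdots\mu_{2n-1}\mu_{2n+1}=xx_1\cdots x_{n-1}$ is the Poincar\'e dual of the corresponding torus-fixed point, which is a generator of $H^{2n}$.

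For (2), the same lemma together with the relations $x_i(\sum_j a_{ij}x_j + b_i x)=0$ lets me reduce every degree-$4$ monomial in $\{x_1,\dots,x_n,x\}$ to a $\Z$-linear combination of $\{x_ix_j\mid 1\le i<j\le n\}\cup\{xx_1\}$: mixed monomials $x_ix$ become $xx_1$ by part~(1) of the lemma; $x^2$ becomes $-(\det A)xx_1$ by part~(2); and $x_i^2$ becomes $-\sum_{j\ne i}a_{ij}x_ix_j-b_ixx_1$ via the quadratic relation combined with $xx_i=xx_1$. Thus these $\binom{n}{2}+1$ elements span $H^4$. For linear independence I will invoke the Danilov identity $\dim_{\Z}H^{2k}(X(\Delta))=h_k(\cK)$ and compute $h_2(\vcd)$ from $f_{-1}=1$, $f_0=2n+1$, $f_1=\binom{2n+1}{2}-2n=n(2n-1)$ (the non-edges being precisely $\{i,n+i\}$ and $\{i,2n+1\}$ for $i=1,\dots,n$); this yields $h_2=\binom{n}{2}+1$, forcing the spanning set to be a basis.

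The step most needing care is the bookkeeping in (1): the substitution $xx_1^{k+1}\mapsto xx_1\cdots x_{k+1}$ must be applied in the correct order so that each intermediate monomial really does reduce under the relations $xx_i=xx_j$, and the identification of the resulting simplex $\{n+1,\dots,2n-1,2n+1\}$ as a nonsingular maximal cone must be verified against the list of minimal non-faces. The linear independence in (2), by contrast, follows at once from the Danilov Betti-number formula together with the elementary $h$-vector calculation.
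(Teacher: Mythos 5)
Your proof is correct and follows essentially the same route as the paper: iterate Lemma~\ref{lem: properties of cohomology class of deg 2} to get $x^n=(-\det A)^{n-1}xx_1\cdots x_{n-1}$ and identify this product with the class of the fixed point of the nonsingular maximal cone on $\{n+1,\dots,2n-1,2n+1\}$, then for (2) deduce spanning from the quadratic relations and match the count $\binom{n}{2}+1$ against $h_2(\vcd)$ via Danilov--Jurkiewicz. You merely make explicit two points the paper leaves implicit, namely the $h$-vector computation and the identification of the relevant maximal simplex (the paper writes it as $\{2n+1,1,2,\dots,n-1\}$, which is a slip since each $\{i,2n+1\}$ with $i\le n$ is a non-face; your $\{n+1,\dots,2n-1,2n+1\}$ is the correct one).
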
}
\begin{proof}
It follows from Lemma~\ref{lem: properties of cohomology class of deg 2} that $x^n=(-\det A)^{n-1}x\prod_{i=1}^{n-1}x_i$. Here, since $\{2n+1,1,2,\dots,n-1\}$ is an $(n-1)$-simplex of $\cK$ and our fan is nonsingular, $x\prod_{i=1}^{n-1}x_i$ is a generator of $H^{2n}(X(A,\b))$. This proves (1).

It follows from \eqref{eq: homo poly related to -i} and Lemma~\ref{lem: properties of cohomology class of deg 2} that $x_ix_j$ $(1\leq i<j\leq n)$ and $xx_1$ generate $H^4(X(A,\b))$. On the other hand, the rank of $H^4(X(A,\b))$ is $\binom{n}{2}+1$. This can be seen by computing the $h$-vector of {the simplicial complex} $\vcd$. Another way to see this fact is that the connected sum of $(\C P^1)^n$ and $\C P^n$ with reversed orientation is a toric manifold over {$\vt(I^n)$} and it has the same Betti numbers as $X(A,\b)$. Since $\binom{n}{2}+1$ is the number of the generators $x_ix_j$ $(1\leq i<j\leq n)$ and $xx_1$, they must be an additive basis of $H^4(X(A,\b))$.
\end{proof}

{For an element $z$ of $H^2(X(A,\b))$, we define}
\[
\Ann(z)=\{w\in H^2(X(A,\b))\mid zw=0 \text{ in }H^\ast(X(A,\b))\}.
\]
Since $\{i,2n+1\}$ for $i=1,\dots,n$ is a non-face of $\vcd$, {we have $xx_i=0$ and hence} $\Ann(cx)$ is of rank $n$ for a nonzero constant $c$. The following lemma shows that the converse is also true.

\begin{lemm}\label{lem: Annihilator}
For $n\geq3$, if $\Ann (z)$ is of rank $n$, then $z$ is a nonzero constant multiple of $x$
\end{lemm}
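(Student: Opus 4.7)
The plan is to analyse the multiplication map $\phi\colon H^2(X(A,\b))\to H^4(X(A,\b))$, $\phi(w):=zw$. Since $H^2$ is free of rank $n+1$ with basis $x_1,\dots,x_n,x$, the hypothesis $\rank \Ann(z)=n$ is equivalent to the image $W:=\phi(H^2)$ having rank $1$. Writing $z=\sum_i c_ix_i+cx$, the relations $x_k(\sum_j a_{kj}x_j+b_kx)=0$ from \eqref{eq: homo poly related to -i} together with Lemma~\ref{lem: properties of cohomology class of deg 2} yield
\[
\phi(x)=\bigl(\textstyle\sum_i c_i-c\det A\bigr)xx_1,\qquad \phi(x_k)=\sum_{j\ne k}(c_j-c_ka_{kj})\,x_kx_j+(c-c_kb_k)\,xx_1.
\]
By Corollary~\ref{coro: cohomology class of deg n}, $\{x_ix_j:i<j\}\cup\{xx_1\}$ is a $\Z$-basis of $H^4$; let $\pi$ be the projection onto $V:=\bigoplus_{i<j}\Z\,x_ix_j$ and set $V_k:=\bigoplus_{j\ne k}\Z\,x_kx_j$, so that $\pi\phi(x)=0$ and $\pi\phi(x_k)\in V_k$.

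Two observations drive the proof. First, $\pi\phi(x_k)\ne 0$ can happen for at most two values of $k$: if it held for three distinct $k_1,k_2,k_3$, these three images would be nonzero rational multiples of one generator $v$ of $\pi(W)\otimes\Q$, so $v\in V_{k_1}\cap V_{k_2}\cap V_{k_3}$; the membership $v\in V_{k_i}$ for each $i$ then forces every coefficient of $v$ in the basis $\{x_{k_1}x_{k_2},x_{k_1}x_{k_3},x_{k_2}x_{k_3}\}$ of this intersection to vanish. Second, for each matrix type in Proposition~\ref{prop: classification of A and b} with $n\ge 3$, a direct inspection of the nonzero off-diagonal positions of $A$ gives $a_{jk}a_{kj}=0$ whenever $j\ne k$.

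If $\pi\phi(x_k)=0$ for every $k$, then $c_j=c_ka_{kj}$ for all $j\ne k$; exchanging the roles of $j$ and $k$ and substituting gives $c_k(1-a_{kk'}a_{k'k})=0$ for any $k'\ne k$, so the second observation forces $c_k=0$ for every $k$. Hence $z=cx$, and $W\ne 0$ then forces $c\ne 0$. Otherwise some $\pi\phi(x_{k_1})\ne 0$; since $\rank W=1$ while $\rank\pi(W)\ge 1$, we have $W\cap\ker\pi=0$, so $\phi(x)=0$ and $\phi(x_m)=0$ for every $m$ in $S:=[n]\setminus T$, where $T$ is the at-most-two element set of indices with $\pi\phi(\cdot)\ne 0$. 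When $|S|\ge 2$, the same substitution trick applied to any two $m,m'\in S$ yields $c_m=0$ for every $m\in S$, and then the identities $c_{k_i}=c_ma_{m,k_i}$ and $c=c_mb_m$ coming from $\phi(x_m)=0$ (for any single $m\in S$) force the remaining coefficients of $z$ to vanish, contradicting $\pi\phi(x_{k_1})\ne 0$. The only remaining situation is $n=3$ with $|T|=2$: here the extra condition that $\pi\phi(x_{k_1})$ lies in $\Z\,x_{k_1}x_{k_2}$ reads $c_m=c_{k_1}a_{k_1,m}$, which combined with $c_{k_1}=c_ma_{m,k_1}$ from $\phi(x_m)=0$ gives $c_m(1-a_{k_1,m}a_{m,k_1})=0$ and hence $c_m=0$, and $z=0$ follows as before. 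The main obstacle is verifying the identity $a_{jk}a_{kj}=0$ uniformly across the four types, which is straightforward from the explicit shape of $A$ in each case.
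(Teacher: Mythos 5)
Your proof is correct, and it organizes the linear algebra differently from the paper. The paper parametrizes an element $w=\sum_i d_ix_i+dx$ of $\Ann(z)$, expands $zw=0$ in the basis of Corollary~\ref{coro: cohomology class of deg n}(2) to get a linear system in the $d_i$'s, and then argues type by type that the rank-at-most-one condition on the explicit coefficient matrix (a $3\times3$ submatrix for Type~1, the cyclic band matrix for the other types) forces $c_i=0$ by inspecting its entries and minors. You instead work with the columns of that same matrix, i.e.\ with the image of the multiplication map $\phi$, and replace the explicit minor computations by two structural facts: each projected image $\pi\phi(x_k)$ is supported in $V_k$, so at most two of them can be nonzero when the image has rank one; and $a_{jk}a_{kj}=0$ for all $j\ne k$ in every type once $n\ge 3$ (this is exactly where the hypothesis $n\ge3$ enters --- for $n=2$ the cyclic matrix has $a_{12}a_{21}=a_1a_2$, which need not vanish). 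This buys a uniform treatment of all four types, at the cost of a short case analysis on the set $T$ of indices with $\pi\phi(x_k)\ne0$, including the boundary case $n=3$, $|T|=2$, which you handle correctly via the extra condition $c_m=c_{k_1}a_{k_1,m}$. The only blemish is the phrasing of your first observation: the triple intersection $V_{k_1}\cap V_{k_2}\cap V_{k_3}$ is simply zero, since no quadratic monomial $x_ix_j$ can involve all three indices; it is not a group with basis $\{x_{k_1}x_{k_2},x_{k_1}x_{k_3},x_{k_2}x_{k_3}\}$. The conclusion $v=0$, and hence the whole argument, is nevertheless sound.
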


\begin{proof}
Set $z=\sum_{i=1}^nc_ix_i+cx$. We will show that $c_{i}=0$ for $i=1,\ldots,n$ when $\Ann(z)$ is of rank $n$. If an element $\sum_{i=1}^{n}d_{i}x_{i}+dx$ {belongs to} $\Ann(z)$, then we have
\begin{align}\label{eq: zw}
\left(
\sum_{i=1}^{n}c_{i}x_{i}+cx
\right)
\left(
\sum_{i=1}^{n}d_{i}x_{i}+dx
\right)
=0.
\end{align}
{In the following we will use the fact that $x_ix_j$ $(1\le i<j\le n)$ and $xx_1$ form an additive basis of $H^4(X(A,\b))$, see Corollary~\ref{coro: cohomology class of deg n} (2).}

In Type 1, the matrix $A$ is a unipotent lower triangular matrix and we have the relations
$
x_{i}^{2}=
-\sum_{j=1}^{i-1}a_{ij}x_{j}x_{i}-b_{i}x_{i}x
$ ($1\leq i \leq n$) from~\eqref{eq: homo poly related to -i}. Plugging this into~\eqref{eq: zw}, we can see that
the coefficient of $x_ix_j$ (${i>j}$) is $c_id_j+c_jd_i-a_{ij}c_id_i$ and
the coefficients of $x_{{2}}x_{{1}},x_{{3}}x_{{2}}$ and $x_{{3}}x_{{1}}$ satisfy
\[
\begin{bmatrix}
c_{2}&c_{1}-c_{2}a_{21}&0\\
0&c_{3}&c_{2}-c_3a_{32}\\
c_{3}&0&c_{1}-c_{3}a_{31}
\end{bmatrix}
\begin{bmatrix}
d_{1}\\
d_{2}\\
d_{3}
\end{bmatrix}
=
\begin{bmatrix}
0\\
0\\
0
\end{bmatrix}.
\]
Since $\Ann(z)$ is of rank $n$, the leftmost matrix above is of rank at most one. Hence, we get $c_{1}=c_{2}=c_{3}=0$. The coefficients of $x_{{i}}x_{{1}}$ and $x_{{i}}x_{{2}}$ for $3\leq i\leq n$ satisfy
\[
\begin{bmatrix}
c_{i}&0&c_{1}-c_{i}a_{i1}\\
0&c_{i}&c_{2}-c_{i}a_{i2}\\
\end{bmatrix}
\begin{bmatrix}
d_{1}\\
d_{2}\\
d_{i}
\end{bmatrix}
=
\begin{bmatrix}
0\\
0\\
\end{bmatrix}
\text{ for }3\leq i\leq n.
\]
Therefore $c_{i}=0$ for $3\leq i\leq n$ by the same reason as above, proving the lemma for Type 1.

In the other types, the matrix $A$ is of the form~\eqref{eq: matrix A when det A is not equal to 1}, and we have relations
\[
x_{i}^{2}=-a_{i}x_{i}x_{i-1}-b_{i}x_{i}x\quad\text{ for }1\leq i\leq n,
\] from~\eqref{eq: homo poly related to -i}.
Plugging {these} into~\eqref{eq: zw}, we can see that the coefficient of $x_ix_{i-1}$ is $c_{i-1}d_i-a_ic_id_i+c_id_{i-1}$ {for $1\leq i \leq n$ and hence we get}
\[
\begin{bmatrix}
c_{n}-c_{1}a_{1}&0&\ldots&0&c_{1}\\
c_{2}&c_{1}-c_{2}a_{2}&\ldots&0&0\\
\vdots&\vdots&\ddots&\vdots&\vdots\\
0&0&\ldots&c_{n-2}-c_{n-1}a_{n-1}&0\\
0&0&\ldots&c_{n}&c_{n-1}-c_{n}a_{n}
\end{bmatrix}
\begin{bmatrix}
d_{1}\\
d_{2}\\
\vdots\\
d_{n-1}\\
d_{n}
\end{bmatrix}
=
\begin{bmatrix}
0\\
0\\
\vdots\\
0\\
0\\
\end{bmatrix}.
\]
Since $\Ann(z)$ is of rank $n$, the rank of the leftmost matrix above is at most {one}. Hence, we get $c_{i}=0$ for $1\leq i\leq n$.
\end{proof}

\section{{Isomorphism classes of cohomology rings in each type}}\label{sec:On cohomology rings of the toric manifolds in each type}

In this section we study isomorphism classes of the cohomology rings $H^*(X(A,\b))$ in each type. It turns out that the value of $\det A$ is preserved under isomorphisms of the cohomology rings and there is only one isomorphism class for each value of $\det A$ unless $\det A=1$ (Proposition~\ref{prop:coho_iso_det}).

\subsection{Type 0}
{The cohomology ring of $X(A,\b)$ in Type 0 is}
\begin{align*}
H^{\ast}(X(A,\b))\cong\Z[x_{1},\ldots,x_{n},x]/\cI(A,\b),
\end{align*}
where $\cI(A,\b)$ is the ideal generated by homogeneous polynomials
\begin{enumerate}
\item $\prod_{j=1}^{n}x_{j}$ and
\item $x_{i}(-x_{i-1}+x_{i}+b_{i}x)$ and $x(-x_{i-1}+x_{i}+b_{i}x)$ for $1\leq i\leq n$,
\end{enumerate}
where $x_{0}=x_{n}$.

\begin{lemm}\label{lem: cohomology ring of Type 0}
Cohomology rings of toric manifolds of Type 0 are isomorphic to each other.
\end{lemm}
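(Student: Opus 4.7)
The plan is to construct, for any two Type~0 pairs $(A, \b)$ and $(A, \b')$ (so $\sum_i b_i = \sum_i b_i' = 1$), an explicit ring isomorphism $\phi \colon H^*(X(A, \b')) \to H^*(X(A, \b))$ of the form
\[
\phi(y) = x, \qquad \phi(y_i) = x_{i+k} + c_i\,x \qquad (i = 1, \dots, n),
\]
where $y_1, \dots, y_n, y$ are the source generators, all indices are read cyclically modulo $n$, the cyclic shift $k \in \{0, 1, \dots, n-1\}$ is to be chosen, and the integers $c_1, \dots, c_n$ are to be determined. The guiding structural fact is Lemma~\ref{lem: properties of cohomology class of deg 2}: in Type~0, $x^2 = 0$ and all the products $xx_i$ coincide as a single class $u := xx_1 \in H^4$, so the target relations can be rewritten as $x_j^2 = x_{j-1} x_j - b_j u$ for every $j$ (indices mod $n$).

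First I would expand $\phi$ on each source relation. Using $x_j^2 = x_{j-1} x_j - b_j u$, $xx_j = u$, and $x^2 = 0$, the image of $y_i(-y_{i-1} + y_i + b_i' y)$ reduces to $(c_i - c_{i-1} + b_i' - b_{i+k})\,u$, yielding the $n$ linear conditions $c_i - c_{i-1} = b_{i+k} - b_i'$ (with $c_0 := c_n$). These are consistent thanks to $\sum_i(b_{i+k} - b_i') = 0$, and they determine $c_1, \dots, c_n$ uniquely up to a free choice of $c_n$. The image of $y(-y_{i-1} + y_i + b_i' y)$ vanishes automatically since $xx_{i+k} = xx_{i-1+k} = u$ and $x^2 = 0$. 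The subtle relation is $\prod_i y_i = 0$. Here a key preliminary observation is that in Type~0 all the top monomials $x \prod_{j \neq \ell} x_j$ coincide as one class $g \in H^{2n}$, because
\[
x\Bigl(\prod_{j \neq \ell} x_j - \prod_{j \neq \ell+1} x_j\Bigr) = (xx_{\ell+1} - xx_\ell)\,\prod_{j \neq \ell, \ell+1} x_j = 0.
\]
Since $g$ corresponds to a maximal simplex of $\vcd$ and the fan is nonsingular, $g$ generates $H^{2n} \cong \Z$; combined with $\prod_j x_j = 0$, this gives $\phi(\prod_i y_i) = (\sum_i c_i)\,g$, so one final integer condition $\sum_i c_i = 0$ is required.

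The main obstacle is reconciling $c_i - c_{i-1} = b_{i+k} - b_i'$ with $\sum_i c_i = 0$ over $\Z$; since replacing $c_n$ by $c_n + 1$ shifts $\sum c_i$ by $n$, this is a residue condition modulo $n$. Using $\sum_i b_i = \sum_i b_i' = 1$ together with the identity
\[
\sum_{j=1}^n j\,b_{j+k} \equiv \sum_{j=1}^n j\,b_j - k \pmod n,
\]
(proved by splitting the sum at the cyclic wrap-around $j + k = n$), a direct calculation yields $\sum_i c_i \equiv k + \sum_j j(b_j' - b_j) \pmod n$. Thus choosing $k \equiv \sum_j j(b_j - b_j') \pmod n$ within $\{0, 1, \dots, n-1\}$ kills this obstruction, and an integer $c_n$ can then be adjusted to force $\sum_i c_i = 0$ exactly. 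With these choices $\phi$ is a well-defined ring homomorphism, and since its action on $H^2$ is given by the unimodular matrix $\begin{bmatrix} P_\sigma & \c \\ 0 & 1 \end{bmatrix}$ (with $P_\sigma$ the cyclic-shift permutation matrix and $\c = (c_1, \dots, c_n)^T$) while both source and target have the same Hilbert function, $\phi$ is in fact a graded ring isomorphism.
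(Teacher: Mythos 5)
Your proposal is correct and is essentially the paper's own argument: the same degree-two substitution $x_i\mapsto x_{i+k}+c_ix$, $x\mapsto x$, with the conditions $c_i-c_{i-1}=b_{i+k}-b_i'$ and $\sum_ic_i=0$, and the same mod-$n$ obstruction killed by the choice of cyclic shift. The only packaging difference is that the paper first normalizes $\b'$ to a standard vector with a single entry $1$ via a cyclic permutation at the level of fans (so its cohomology map needs no shift $k$), whereas you absorb that permutation into the ring isomorphism itself.
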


\begin{proof}
Given $\b=[b_{1},\ldots,b_{n}]^{T}$, let $\ell$ be the integer {satisfying} $\sum_{i=1}^{n-1}(n-i)b_{i}\equiv\ell \pmod{n}$ and $0\leq\ell \leq n-1$. Let $\b'=[b_{1}',\ldots,b_{n}']^T$ such that
\begin{align*}
b_{n-\ell}'=1 \quad\text{ and }\quad b'_{i}=0\ \text{ for } i\neq n-\ell.
\end{align*}
Then $\sum_{i=1}^{n-1}(n-i){b_{i}'}=\ell$.
{Note that $(A,\b')$ is conjugate to $(A,\b'_\sigma)$ for any cyclic permutation $\sigma$ on $[n]$ since $A_\sigma=A$ in Type 0 (see Definition~\ref{defi:conjugate} for the conjugation). Hence, it suffices to show that $H^{\ast}(X(A,\b))$ is isomorphic to $H^{\ast}(X(A,\b'))$.}

Since $\sum_{i=1}^{n-1}(n-i)b_{i}\equiv {\ell \equiv} \sum_{i=1}^{n-1}(n-i)b'_{i}~\pmod{n}$,
there exists an integer $\alpha$ such that
\begin{align*}
\ds\sum_{i=1}^{n-1}(n-i)b_{i}-\sum_{i=1}^{n-1}(n-i)b'_{i}+n\alpha=0.
\end{align*}
Then the integers $c_{i}$'s defined by
\begin{align*}
c_{i}=\sum_{j=1}^{i}(b_{j}-b'_{j})+\alpha\ \text{ for } 1\leq i\leq n-1\quad \text{ and }\quad c_{n}=\alpha
\end{align*}
{satisfy} that
\begin{align}\label{eq:4-1}
\sum_{i=1}^{n}c_{i}=0\quad\text{ and }\quad c_{i}-c_{i-1}=b_{i}-b'_{i}\ \text{ for }1\leq i\leq n,
\end{align}
where $c_{0}=c_{n}$ {(note that $\sum_{i=1}^nb_i=\sum_{i=1}^nb_i'=1$)}.

We now consider {the} {automorphism $\varphi$ of $\Z[x_{1},\ldots,x_{n},x]$ defined by
\[
\varphi(x_{i})= x_{i}+c_{i}x\quad\text{for $1\leq i\leq n$}\quad \text{and}\quad \varphi(x)=x.
\]
We {shall check} that $\varphi$ {induces} an isomorphism from $H^\ast(X(A,\b'))$ to $H^\ast(X(A,\b))$. First we have
\begin{align*}
\varphi(x_{i}(-x_{i-1}+x_{i}+b'_{i}x))
&=(x_{i}+c_{i}x)(-x_{i-1}+x_{i}+(-c_{i-1}+c_{i}+b'_{i})x)\\
&=(x_{i}+c_{i}x)(-x_{i-1}+x_{i}+b_{i}x)\qquad\text{(by \eqref{eq:4-1})}\\
&=0 \qquad \text{in $H^*(X(A,\b))$}.
\end{align*}
Similarly we have
\[
\varphi(x(-x_{i-1}+x_i+b'_ix))= x(-x_{i-1}+x_{i}+b_{i}x)=0\qquad \text{in $H^*(X(A,\b))$}.
\]
Finally we have
\begin{align*}
\ds\varphi(\prod_{i=1}^{n}x_{i})
&=\prod_{i=1}^{n}(x_{i}+c_{i}x)\\
&=\sum_{i=1}^{n}(c_{i}x\prod_{j\neq i}^{n}x_{j})\qquad \text{($\because \prod_{i=1}^nx_i=0$ and $x^2=0$ by Lemma~\ref{lem: properties of cohomology class of deg 2})}\\
&=(\sum_{i=1}^{n}c_{i})xx_{1}^{n-1}\qquad \text{($\because xx_1=xx_i$ for any $i$ by Lemma~\ref{lem: properties of cohomology class of deg 2})}\\
&=0 \qquad \qquad \qquad \quad\text{ ($\because \sum_{i=1}^nc_i=0$ by \eqref{eq:4-1})}
\end{align*} {in $H^*(X(A,\b))$.}
This proves that $\varphi$ induces a graded ring homomorphism $\hat{\varphi}$ from $H^\ast(X(A,\b'))$ to $H^\ast(X(A,\b))$. Similarly, the inverse of $\varphi$ induces a graded ring homomorphism in the opposite direction and gives the inverse of $\hat{\varphi}$, proving the lemma.}
\end{proof}

\subsection{Type 1}

Recall that a toric manifold $X(A,\b)$ of Type 1 is a blow up of a Bott manifold $X(A)$ at a torus fixed point. {We have
\begin{align*}
H^{\ast}(X(A))\cong\Z[x_{1},\ldots,x_{n}]/{\cI}(A)
\end{align*}
where ${\cI}(A)$ is the ideal generated by
\[
x_{i}(x_{i}+\sum_{j=1}^{i-1}a_{ij}x_{j}) \qquad (1\leq i\leq n)
\]
by \eqref{eq: cohomology ring of a Bott mfd}. On the other hand, as observed in Section~\ref{sec:Properties of cohomology rings of the toric manifolds.} we have
\begin{align*}
H^{\ast}(X(A,\b))\cong\Z[x_{1},\ldots,x_{n},x]/\cI(A,\b),
\end{align*}
where $\cI(A,\b)$ is the ideal generated by homogeneous polynomials
\begin{enumerate}
\item $\prod_{i=j}^{n}x_{j}$ and
\item $x_{i}(x_{i}+\sum_{j=1}^{i-1}a_{ij}x_{j}+b_{i}x),~x(x_{i}+\sum_{j=1}^{i-1}a_{ij}x_{j}+b_{i}x)$ for $1\leq i\leq n$,
\end{enumerate}where $b_{i}=1+\sum_{j=1}^{i-1}a_{ij}$.}
\begin{lemm}\label{lemm: cohomology type 1}
{Suppose $n\ge 3$. Then $H^*(X(A,\b))$ and $H^*(X(A',\b'))$ in Type 1 are isomorphic as graded rings if and only if $H^*(X(A))$ and $H^*(X(A'))$ are isomorphic as graded rings.}
\end{lemm}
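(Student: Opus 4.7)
For the forward direction, suppose $\Phi\colon H^{*}(X(A,\b))\to H^{*}(X(A',\b'))$ is a graded ring isomorphism. By Lemma~\ref{lem: Annihilator}, $\Phi(x)=\epsilon x'$ for some $\epsilon\in\{\pm 1\}$, so $\Phi$ descends to an isomorphism between the quotients by $(x)$. Setting $x=0$ in the defining relations identifies $H^{*}(X(A,\b))/(x)$ with $\Z[x_{1},\ldots,x_{n}]/(\prod_{i}x_{i},\ x_{i}(x_{i}+\sum_{j<i}a_{ij}x_{j}))$, which equals $H^{*}(X(A))/H^{2n}(X(A))$ since $\prod_{i}\tilde x_{i}$ generates $H^{2n}(X(A))\cong\Z$. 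Because $H^{*}(X(A))$ is generated in degree $2$ with all relations in degree $4<2n$ (using $n\ge 3$), these generators and relations are fully visible in the quotient; lifting degree by degree yields a graded ring isomorphism $\phi\colon H^{*}(X(A))\to H^{*}(X(A'))$.

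For the converse, given $\phi$, I define $\Phi(x)=\epsilon x'$ and $\Phi(x_{i})=(\pi')^{*}(\phi(\tilde x_{i}))-\epsilon x'$, where $(\pi')^{*}\colon H^{*}(X(A'))\to H^{*}(X(A',\b'))$ denotes the blow-up pullback sending $\tilde x_{i}$ to $x_{i}+x$, and $\epsilon\in\{\pm 1\}$ is to be chosen. Lemma~\ref{lem: properties of cohomology class of deg 2} gives $x^{2}=-xx_{i}$ in Type~1, so $x(x_{i}+x)=0$ and hence $x'$ annihilates $(\pi')^{*}(H^{>0}(X(A')))$. This makes the verification of the relations in~\eqref{eq: homo poly related to -i} immediate from $\phi$ respecting the Bott relations. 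The remaining relation $\prod_{i}x_{i}=0$ evaluates under $\Phi$ to $(\epsilon'(-1)^{n+1}+(-\epsilon)^{n})(x')^{n}$ once one uses the iterated identity $\prod_{j}(x_{j}+x)=(-1)^{n+1}x^{n}$ (which follows from $xx_{j}=-x^{2}$), where $\epsilon'\in\{\pm 1\}$ is defined by $\phi(\prod_{i}\tilde x_{i})=\epsilon'\prod_{i}\tilde x_{i}'$. Vanishing is equivalent to $\epsilon^{n}=\epsilon'$, which can be arranged for any $\epsilon'$ when $n$ is odd by choosing $\epsilon$ appropriately.

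The main obstacle is handling the sign $\epsilon'$ when $n$ is even, for which I claim every Bott manifold admits a graded ring automorphism $\alpha_{n}$ that flips the top class: explicitly, $\alpha_{n}(\tilde x_{n})=-\tilde x_{n}-\sum_{j<n}a_{nj}\tilde x_{j}$ while $\alpha_{n}$ fixes $\tilde x_{j}$ for $j<n$. The only nontrivial relation to check is the one for $\tilde x_{n}$, and one has $(-\tilde x_{n}-\sum_{j<n}a_{nj}\tilde x_{j})(-\tilde x_{n})=\tilde x_{n}(\tilde x_{n}+\sum_{j<n}a_{nj}\tilde x_{j})=0$. The image of the top under $\alpha_{n}$ is $-\prod_{i}\tilde x_{i}-\sum_{j<n}a_{nj}\tilde x_{j}\prod_{i<n}\tilde x_{i}$, and each summand $\tilde x_{j}\prod_{i<n}\tilde x_{i}$ with $j<n$ equals $\tilde x_{j}^{2}\prod_{i<n,\,i\ne j}\tilde x_{i}$, which vanishes after iteratively applying $\tilde x_{k}^{2}=-\sum_{l<k}a_{kl}\tilde x_{k}\tilde x_{l}$ until reaching $\tilde x_{1}^{2}=0$. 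Composing $\phi$ with $\alpha_{n}$ on either side then lets us assume $\epsilon'=1$, completing the construction of $\Phi$.
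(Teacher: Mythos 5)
Your proof is correct, and the ``only if'' direction follows the paper's route exactly: use Lemma~\ref{lem: Annihilator} to force $\Phi(x)=\pm x'$, pass to the quotient by $(x)$, and recover an isomorphism of the Bott cohomologies (your remark that the relations live in degree $4<2n$ makes precise a step the paper leaves implicit, since the quotient is $H^*(X(A))$ modulo its top degree rather than $H^*(X(A))$ itself). Where you genuinely diverge is the ``if'' direction. The paper disposes of it in one line by identifying $X(A,\b)$ with the connected sum $X(A)\#\overline{\CP^n}$, whereas you build $\Phi$ algebraically from the blow-down pullback $\tilde x_i\mapsto x_i+x$ and the identity $x(x_i+x)=0$ from Lemma~\ref{lem: properties of cohomology class of deg 2}. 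Your computation $\prod_j(x_j+x)=(-1)^{n+1}x^n$ is right (expand $0=\prod_j x_j=\prod_j\bigl((x_j+x)-x\bigr)$ and use $x(x_j+x)=0$ to kill all mixed terms), and it correctly isolates the sign constraint $\epsilon^n=\epsilon'$. This constraint is exactly the orientation-compatibility issue hidden in the paper's ``obvious'': a ring isomorphism of connected sums must match the fundamental classes of the summands, and for even $n$ the $\overline{\CP^n}$ factor has no top-class-reversing automorphism. Your resolution --- the involution $\alpha_n$ of $H^*(X(A))$ fixing $\tilde x_j$ for $j<n$ and sending $\tilde x_n\mapsto-\tilde x_n-\sum_{j<n}a_{nj}\tilde x_j$, which negates $\prod_i\tilde x_i$ because $\tilde x_j\prod_{i<n}\tilde x_i=0$ for $j<n$ --- is correct and is precisely the fact needed to make the paper's connected-sum argument airtight for all $n$. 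The only cosmetic omission is the routine remark that $\Phi$ is bijective because it is surjective between graded rings that are free of equal finite rank in each degree.
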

\begin{proof}
{Since $X(A,\b)$ is a blow up of $X(A)$ at a point, $X(A,\b)$ is diffeomorphic to the connected sum of $X(A)$ and $\C P^n$ with reversed orientation. Therefore, the \lq\lq if\rq\rq part is obvious.}

{Let $\hat{\varphi}\colon H^*(X(A,\b))\to H^*(X(A',\b'))$ be a graded ring isomorphism. Since $\{x_1,\dots,x_n,x\}$ is an additive basis of $H^2(X(A,\b))$, $\hat{\varphi}$ induces an automorphism $\varphi$ of $\Z[x_{1},\ldots,x_{n},x]$ such that $\varphi(\cI(A,\b))=\cI(A',\b')$. By Lemma~\ref{lem: Annihilator}, $\varphi(x)=x$ up to sign, so $\varphi$ induces an automorphism $\bar\varphi$ of $\Z[x_{1},\ldots,x_{n}]$ such that $\bar{\varphi}(\cI(A))=\cI(A')$. This proves the ``only if'' part of the lemma.}
\end{proof}

It is conjectured that if two Bott manifolds $X(A)$ and $X(A')$ have isomorphic cohomology rings, then they are diffeomorphic, and this conjecture is affirmatively solved when
\begin{enumerate}
\item the complex dimension of $X(A)$ is at most four,
or
\item $H^\ast(X(A);{\mathbb{F}})$ is isomorphic to $H^{\ast}((\CP^{1})^{n};{\mathbb{F}})$ for ${\mathbb{F}}=\Q$ or $\Z/2\Z$,
\end{enumerate}
see {\cite{choi15}, \cite{C-M12}, \cite{C-M-M15}}, \cite{ch-ma-su10}. Combining these results with Lemma~\ref{lemm: cohomology type 1}, we get the following.

\begin{coro}
Two toric manifolds $X(A,\b)$ and $X(A',\b')$ in Type~1 are diffeomorphic if their cohomology rings are isomorphic as graded rings and the corresponding Bott manifolds $X(A)$ and $X(A')$ satisfy one of the conditions above.
\end{coro}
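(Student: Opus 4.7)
The plan is to chain together three inputs: Lemma~\ref{lemm: cohomology type 1}, the cited cohomological rigidity results for Bott manifolds, and the standard fact that blowing up at a (smooth) point is a smooth operation that only depends on the diffeomorphism class of the ambient manifold and a choice of oriented chart.

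First I would assume $H^*(X(A,\b)) \cong H^*(X(A',\b'))$ as graded rings and apply Lemma~\ref{lemm: cohomology type 1} to deduce that $H^*(X(A)) \cong H^*(X(A'))$ as graded rings. Then, under either hypothesis (1) or (2) of the corollary, the cited results from \cite{choi15}, \cite{C-M12}, \cite{C-M-M15}, \cite{ch-ma-su10} give a diffeomorphism $X(A) \cong X(A')$.

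Next I would use the description of $X(A,\b)$ as the blow-up of $X(A)$ at a torus fixed point, which, as already noted in the proof of Lemma~\ref{lemm: cohomology type 1}, is smoothly the same as the connected sum $X(A) \# \overline{\CP^n}$ (with $\overline{\CP^n}$ denoting $\CP^n$ with the opposite orientation). The same holds for $X(A',\b') \cong X(A') \# \overline{\CP^n}$. Since connected sum is well defined on oriented diffeomorphism classes (all choices of embedded disk are ambient isotopic on a connected oriented smooth manifold), a diffeomorphism $X(A) \cong X(A')$ induces a diffeomorphism
\[
X(A,\b) \;\cong\; X(A) \,\#\, \overline{\CP^n} \;\cong\; X(A') \,\#\, \overline{\CP^n} \;\cong\; X(A',\b'),
\]
as desired.

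There is no real obstacle here: the substantive content is already packaged in Lemma~\ref{lemm: cohomology type 1} and in the external rigidity theorems for Bott manifolds. The only point deserving a brief remark is orientation: one should fix orientations on $X(A)$ and $X(A')$ coming from their complex structures so that the connected summand $\overline{\CP^n}$ is the same on both sides, which is automatic since the blow-up at a complex fixed point produces the orientation-reversed $\CP^n$ summand intrinsically.
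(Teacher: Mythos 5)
Your argument is correct and is essentially the paper's own: the corollary is stated there with no separate proof beyond the remark that it follows by combining Lemma~\ref{lemm: cohomology type 1} with the cited cohomological rigidity results for Bott manifolds, which is exactly the chain you give. Your closing remark on orientations and the well-definedness of the connected sum with $\CP^n$ (orientation reversed) only makes explicit a point the paper leaves implicit.
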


\subsection{Type 2}

{The cohomology ring of $X(A,\b)$ in Type 2 is}
\begin{align*}
H^{\ast}(X(A,\b))\cong\Z[x_{1},\ldots,x_{n},x]/\cI(A,\b),
\end{align*}
where $\cI(A,\b)$ is the ideal generated by homogeneous polynomials
\begin{enumerate}
\item $x_{i}(x_{i}+a_{i}x_{i-1}+b_{i}x),~x(x_{i}+a_{i}x_{i-1}+b_{i}x)$ for $1\leq i\leq n$, and
\item $\prod_{i=1}^{n}x_{i}$,
\end{enumerate}
where $b_{i}=(1+a_{i})/2$.

\begin{lemm}\label{lem: cohomology ring of Type 2}
Cohomology rings of toric manifolds of Type 2 are isomorphic to each other.
\end{lemm}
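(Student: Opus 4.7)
The plan is to construct an explicit ring isomorphism between any two Type~2 cohomology rings by a diagonal sign-flip in suitably chosen coordinates. I would first introduce the auxiliary classes $y_i := 2 x_i + x \in H^2(X(A, \b))$. Using Lemma~\ref{lem: properties of cohomology class of deg 2} (which in Type~2 gives $xx_i=xx_1$ and $x^2=-2xx_1$) together with $b_i = (1+a_i)/2$, a direct computation yields $y_i x = 0$ and $y_i(y_i + a_i y_{i-1}) = 0$ in $H^*(X(A, \b))$. Moreover, starting from the polynomial identity $2^n \prod_i x_i = \prod_i(y_i - x)$ and expanding, every cross-term $(-x)^{n-|S|} \prod_{i \in S} y_i$ with $1 \le |S| \le n-1$ contains a factor $y_{i_0} x$ for some $i_0 \in S$ and hence lies in $\cI(A, \b)$; the two surviving terms then give the key top-degree identity $\prod y_i + (-1)^n x^n \equiv 0 \pmod{\cI(A, \b)}$.

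Given two Type~2 sequences $(a_i)$ and $(a'_i)$, set $\delta_i := a_i a'_i$. Since both sequences have an odd number of $1$'s, $\prod_i a_i = \prod_i a'_i = (-1)^{n-1}$ and so $\prod_i \delta_i = 1$, whence signs $\epsilon_i \in \{\pm 1\}$ with $\epsilon_i \epsilon_{i-1} = \delta_i$ exist, uniquely up to a simultaneous sign flip. I would then define the involutive automorphism $\varphi$ of $\Z[x_1, \dots, x_n, x]$ by $\varphi(x_i) = \epsilon_i x_i + \tfrac{\epsilon_i - 1}{2} x$ and $\varphi(x) = x$; a direct check shows $\varphi(y_i) = \epsilon_i y_i$ (so $\varphi$ is a diagonal sign-flip in $y$-coordinates) and $\varphi^2 = \mathrm{id}$. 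Since $\varphi(x_i) = (\epsilon_i y_i - x)/2$ and $\varphi(x_i + a_i x_{i-1} + b_i x) = (\epsilon_i y_i + a_i \epsilon_{i-1} y_{i-1})/2$ (the $x$-coefficient cancels via $b_i = (1+a_i)/2$), each quadratic generator $x_i(x_i + a_i x_{i-1} + b_i x)$ or $x(x_i + a_i x_{i-1} + b_i x)$ of $\cI(A, \b)$ maps under $\varphi$ to a $\Z$-linear combination of $y_i(y_i + a'_i y_{i-1})$ and $y_j x$ (for $j \in \{i, i-1\}$), divided by $4$ or $2$. Each such basic element lies in $\cI(A', \b')$ by the target's version of the structural identities of paragraph~1, and the division by powers of~$2$ is absorbed by the torsion-freeness of toric cohomology.

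The main verification concerns the product relation. From $\varphi(x_i) = (\epsilon_i y_i - x)/2$ we get $2^n \varphi(\prod x_i) = \prod(\epsilon_i y_i - x)$; applying the same cross-term argument in the target reduces this modulo $\cI(A', \b')$ to $\prod \epsilon_i \cdot \prod y_i + (-1)^n x^n$, and using the target's identity $\prod y_i \equiv (-1)^{n+1} x^n$ yields $(-1)^n(1 - \prod_i \epsilon_i) x^n$. Thus provided $\prod_i \epsilon_i = 1$ we have $2^n \varphi(\prod x_i) \in \cI(A', \b')$, and torsion-freeness upgrades this to $\varphi(\prod x_i) \in \cI(A', \b')$; combined with the involutivity of $\varphi$ this produces the desired isomorphism $H^*(X(A, \b)) \cong H^*(X(A', \b'))$. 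Finally, to ensure $\prod_i \epsilon_i = 1$, solving $\epsilon_i \epsilon_{i-1} = \delta_i$ gives $\epsilon_i = \epsilon_n \delta_1 \cdots \delta_i$ and hence $\prod_i \epsilon_i = \epsilon_n^n \prod_j \delta_j^{n-j+1}$: for $n$ odd the free sign $\epsilon_n$ can always be chosen to make this $+1$, while for $n$ even the value is intrinsic but one preliminary cyclic shift of the target (itself a fan-automorphism-induced ring isomorphism) toggles $\prod_{j \text{ even}} a'_j$ between its two possible values $\pm 1$ (which multiply to $\prod a'_j = -1$), so some shift always achieves $\prod_i \epsilon_i = 1$. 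The hardest step will be establishing the structural identity $\prod y_i + (-1)^n x^n \equiv 0$ in paragraph~1; once this is in hand, the remaining verifications reduce to essentially mechanical algebra in the $y$-coordinates.
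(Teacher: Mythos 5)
Your proposal is correct, and all the computations I checked go through: $y_ix=2x_ix+x^2=0$ and $y_i(y_i+a_iy_{i-1})=2\bigl(2x_i(x_i+a_ix_{i-1}+b_ix)+x(x_i+a_ix_{i-1}+b_ix)\bigr)\in\cI(A,\b)$ follow from Lemma~\ref{lem: properties of cohomology class of deg 2}; the expansion of $\prod_i(y_i-x)=2^n\prod_i x_i$ does give $\prod_i y_i=(-1)^{n+1}x^n$; and the parity bookkeeping ($\prod_i\delta_i=1$ because both sequences have an odd number of $1$'s, the free sign $\epsilon_n$ for $n$ odd, the cyclic shift toggling $\prod_{j\,\mathrm{even}}a_j'$ for $n$ even via Theorem~\ref{theo:classification as varieties}(3)) is sound, as is the use of torsion-freeness of $H^*(X(A',\b'))$ to absorb the powers of $2$. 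The route differs from the paper's in organization rather than in its basic ingredient: the paper's automorphism in Lemma~\ref{lem: cohomology ring of Type 2} is exactly your $\varphi$ for the sign pattern $\epsilon_k=-1$ on a consecutive block $i\le k\le j-1$ of even length (so $\prod_k\epsilon_k=1$ automatically), and the paper iterates this to remove two $(-1)$'s at a time, whereas you compose all steps into a single global sign-flip and must therefore confront the obstruction $\prod_i\epsilon_i=1$ explicitly, resolving it by a preliminary cyclic shift when $n$ is even. What your version buys is a cleaner conceptual picture — the $y$-coordinates diagonalize the quadratic relations and the identity $\prod_i y_i=(-1)^{n+1}x^n$ replaces the paper's binomial computation of $\prod_{k=i}^{j-1}(x_k+x)$ — and it explains why the paper's blocks must have even length; what the paper's version buys is that the induction never leaves the integral polynomial ring (no division by $2$ and no appeal to torsion-freeness) and needs no case split on the parity of $n$.
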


\begin{proof}
The matrix $A$ in Type 2 is determined by {its entries} $\{a_{1},\ldots,a_{n}\}$. Assume that the number of $a_{i}$'s equal to $-1$, denoted by $m_A$, is greater than one. Since $n-m_{A}$ is odd, it suffices to show that $H^{\ast}(X(A,\b))$ and $H^{\ast}(X(A',\b'))$ are isomorphic as graded rings whenever $m_{A}-m_{A'}=2$.

In the following, {the indices of $a_i$'s are taken modulo $n$ as usual}. Since the number of $a_{i}$'s equal to $1$ is odd, there exist {$i<j$} such that
\begin{align*}
a_{i}=-1,\ a_{i+1}=\cdots=a_{j-1}=1,\ a_{j}=-1,\quad\text{and\quad $j-i$ is even}.
\end{align*}
Then we have
\begin{equation*}
x_k+a_kx_{k-1}+b_kx=\begin{cases}
x_k-x_{k-1}&\text{ for }k=i\text{ or }j,\\
x_k+x_{k-1}+x&\text{ for }i<k<j.
\end{cases}
\end{equation*}
We set
\begin{align*}
a_{k}'=
\begin{cases}
1&\text{for }i\leq k\leq j\\
a_{k}&\text{otherwise}.
\end{cases}
\end{align*}
Consider {the} automorphism $\varphi$ of $\Z[x_{1},\ldots,x_{n},x]$ defined by
\begin{equation}\label{eq:4-2}
\varphi(x_{k})=
\begin{cases}
-(x_{k}+x)& \text{for }i\leq k\leq j-1,\\
x_{k}& \text{otherwise,}
\end{cases}\quad \text{ and }\quad
\varphi(x)= x .
\end{equation}
We will show that $\varphi$ induces a graded ring isomorphism from $H^\ast(X(A',\b'))$ to $H^\ast(X(A,\b))$.
One can easily check that
\[
\varphi(x_k+a_k'x_{k-1}+b_k'x)=\begin{cases} -(x_k+a_kx_{k-1}+b_kx)\quad&\text{$(i\le k\le j-1)$},\\
x_k+a_kx_{k-1}+b_kx\quad&\text{otherwise}.\end{cases}
\]
This together with \eqref{eq:4-2} shows that both $\varphi(x_{k}(x_{k}+a_{k}'x_{k-1}+b_{k}'x))$ and $\varphi(x(x_{k}+a_{k}'x_{k-1}+b_{k}'x))$ are in $\cI(A,\b)$ for $k=1,\dots,n$.

Let us check that $\varphi(\prod_{k=1}^{n}x_{k})=\prod_{k=1}^nx_k$ in $H^*(X(A,\b))$. {Since $j-i$ is even, it follows from \eqref{eq:4-2} that}
\begin{equation} \label{eq:prodxk}
\varphi(\prod_{k=1}^{n}x_{k})=\prod_{k=i}^{j-1}(x_{k}+x)\prod_{\text{otherwise}}x_{k}.
\end{equation}
{Recall that $x^2=-2xx_k$ and $xx_1=xx_k$ for any $k$ by Lemma~\ref{lem: properties of cohomology class of deg 2}. Then in $H^*(X(A,\b))$ we have
\begin{equation*}
\begin{split}
&\prod_{k=i}^{j-1}(x_{k}+x)\\
=&\prod_{k=i}^{j-1}x_{k}+\left({j-i\choose 1}+{j-i\choose 2}(-2)+\cdots+{j-i\choose j-i}(-2)^{j-i-1}\right)xx_{1}^{j-i-1}\\
=&\prod_{k=i}^{j-1}x_{k}-\frac{1}{2}\Big((1+(-2))^{j-i}-1\Big) xx_1^{j-i-1}\\
=&\prod_{k=i}^{j-1}x_k \qquad (\because \text{$j-i$ is even}).
\end{split}
\end{equation*}
This together with \eqref{eq:prodxk} shows that $\varphi(\prod_{k=1}^{n}x_{k})=\prod_{k=1}^n x_k$ in $H^*(X(A,\b))$.}

{Thus, $\varphi$ induces a graded ring homomorphism $\hat{\varphi}\colon H^\ast(X(A',\b'))\to H^\ast(X(A,\b))$. Similarly, the inverse of $\varphi$ induces a graded ring homomorphism in the opposite direction and gives the inverse of $\hat{\varphi}$, proving the lemma.}
\end{proof}

\subsection{Type 3}

{The cohomology ring of $X(A,\b)$ in Type 3 is}
\begin{align*}
H^{\ast}(X(A,\b))\cong\Z[x_{1},\ldots,x_{n},x]/\cI(A,\b),
\end{align*}
where $\cI(A,\b)$ is the ideal generated by homogeneous polynomials
\begin{enumerate}
\item $\prod_{j=1}^{n}x_{j}$, $x_{1}(x_{1}+ax_{n}+x)$, $x(x_{1}+ax_{n}+x)$ and
\item {$x_{i}(x_{i-1}-x_{i}),~x(x_{i-1}-x_{i})$ for $2\leq i\leq n$.}
\end{enumerate}

\begin{prop}\label{prop: cohomology ring of Type 3}
{When $n\ge 3$, $H^*(X(A,\b))$ and $H^*(X(A',\b'))$ in Type 3 are isomorphic as graded rings if and only if $(A,\b)=(A',\b')$. In other words, when $n\ge 3$,} two toric manifolds of Type 3 are isomorphic as varieties if and only if their cohomology rings are isomorphic as graded rings.
\end{prop}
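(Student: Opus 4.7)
The ``if'' direction is immediate: if $(A,\b)=(A',\b')$ then the fans, varieties, and cohomology rings are identical. For the converse, let $\hat\varphi\colon H^*(X(A,\b))\to H^*(X(A',\b'))$ be a graded ring isomorphism; since a Type 3 pair $(A,\b)$ is determined by the single integer $a\in\Z\setminus\{0,\pm 1\}$ (with $\det A=1+a$), the task reduces to proving $a=a'$.

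The plan is to extract $a$ from the ring structure in two stages. First apply Lemma~\ref{lem: Annihilator} to both $\hat\varphi$ and $\hat\varphi^{-1}$: the unique-up-to-scalar class with rank-$n$ annihilator is preserved, so $\hat\varphi(x)=\epsilon x'$ with $\epsilon\in\{\pm 1\}$. Then Corollary~\ref{coro: cohomology class of deg n}(1) gives $x^n=(\det A)^{n-1}g$ for a generator $g$ of $H^{2n}$; comparing $\hat\varphi(x^n)=\epsilon^n(x')^n$ with the analogous identity in the target yields $(\det A)^{n-1}=\pm(\det A')^{n-1}$, hence $|1+a|=|1+a'|$, so either $a=a'$ or $a+a'=-2$. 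To rule out the latter, pass to the quotient $\bar\varphi\colon R\to R'$ with $R:=H^*(X(A,\b))/(x)$ (well-defined since $\hat\varphi(x)=\pm x'$), whose relations reduce to $x_1^2=-ax_1x_n$, $x_i^2=x_{i-1}x_i$ for $i\ge 2$, and $\prod_j x_j=0$. The $2n$ primitive degree-two classes in $R$ with rank-one annihilator are precisely $\pm x_1,\dots,\pm x_n$ together with $\pm\mu_1,\dots,\pm\mu_n$ (where $\mu_1=x_1+ax_n$ and $\mu_i=x_i-x_{i-1}$ for $i\ge 2$); for each such line $\Z y$ the relation $y^2\in y\cdot R^2$ determines a unique ``companion line'' $\Z z$ together with a signed integer coefficient. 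The restriction of this intrinsic digraph to $\{\Z x_1,\dots,\Z x_n\}$ is the single oriented cycle $\Z x_1\to\Z x_n\to\cdots\to\Z x_2\to\Z x_1$, and its signed holonomy equals $-a$, with the sole nontrivial factor $-a$ coming from $x_1^2=x_1\cdot(-ax_n)$ while the remaining $n-1$ edges each contribute $+1$. Since $-a=2+a$ would force $a=-1$, this invariant distinguishes $a$ from $-2-a$ for any $a\in\Z\setminus\{0,\pm 1\}$, yielding $a=a'$.

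The main obstacle is verifying rigorously that the signed cyclic holonomy is a genuine ring-theoretic invariant: one must check both that no rank-one-annihilator class in $R^2$ lies outside the $2n$ listed primitive directions for general $n\ge 3$, and that the product of companion coefficients around the cycle is independent of all choices of primitive generator and starting vertex. Both facts can be established by direct calculation in $R$ using the relations above, but the bookkeeping is delicate, particularly in ruling out accidental coincidences for special values of $a$.
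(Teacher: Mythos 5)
Your first stage is correct and is exactly what the paper does: Lemma~\ref{lem: Annihilator} forces $\hat\varphi(x)=\pm x'$, and Corollary~\ref{coro: cohomology class of deg n}(1) gives $|1+a|=|1+a'|$, so the whole problem is to exclude $a+a'=-2$. The gap is in the second stage. The ``companion line'' you use to define the holonomy is not well-defined as stated: the condition $y^2\in y\cdot R^2$ is satisfied trivially by $z=y$, and more seriously a factorization $y^2=y\cdot(cz)$ determines $cz$ only modulo $\Ann(y)\cap R^2$. For $y=x_1$ one has $\Ann(x_1)\cap R^2=\Z(x_1+ax_n)$, so for instance
\[
x_1^2=x_1\cdot(-ax_n)=x_1\cdot(2x_1+ax_n)=x_1\cdot x_1,
\]
three factorizations with different primitive ``companion lines'' and different coefficients. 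So the signed holonomy has no meaning until you rigidify the definition (e.g.\ by declaring the companion of $y$ to be the annihilator line $\Ann(y)\cap R^2$ itself, which \emph{is} canonical, and rebuilding the invariant from that). Beyond this, the two facts you defer as ``delicate bookkeeping'' --- that the rank-one-annihilator primitive classes are exactly $\pm x_i,\pm\mu_i$, and that the cyclic product is independent of choices --- are not side issues; together with a third point you do not address (an isomorphism could a priori carry the $x_i$-cycle into a configuration involving the $\mu_j'$, so you must also compute the companion structure on the $\mu_j$ and show no second cycle with holonomy $2+a'$ appears), they constitute essentially the entire proof, and none of them is carried out. As written the argument does not close.

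For comparison, the paper takes a more pedestrian route after the common first stage: assuming $a+a'=-2$, it writes $\varphi(x_1)\equiv\sum r_ix_i$ and $\varphi(x_n)\equiv\sum q_ix_i$ modulo $x$, applies $\varphi$ to the relation $x_1(x_1+ax_n+x)=0$, and reads off quadratic identities in the $r_i,q_i$ from the basis $x_ix_j$ of $H^4/(x)$. Reducing these modulo divisors $p$ of $a$ and using $\gcd(r_1,\dots,r_n)=1$, it rules out $a$ even by a mod~$4$ parity argument and $|a|\ge 3$ odd by a prime-divisor argument (with a special sub-case $p=5$ when $n=3$), finally contradicting $a+a'=-2$. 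If you want to salvage your structural approach you would need to fix the definition of the invariant and prove the three verifications above; otherwise the explicit coefficient computation is the shorter path.
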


\begin{proof}
The matrix $A$ of Type 3 is determined by $a$, the $(1,n)$-entry, in Proposition~\ref{prop: classification of A and b}. {Let $A$ and $A'$ be the matrices of Type 3 determined by $a$ and $a'$, respectively.}

Suppose that {$a\not=a'$ and there is a graded ring isomorphism $\varphi:H^{\ast}(X(A,\b))\to H^{\ast}(X(A',\b'))$. We shall deduce a contradiction.} Since $\varphi$ is an isomorphism, it follows from Corollary~\ref{coro: cohomology class of deg n} that
$$
|\det A|=|1+a|=|1+a'|=|\det A'|.
$$
Hence, we have $a+a'=-2$. {We express $\varphi(x_1)$ and $\varphi(x_n)$ as
\begin{align*}
\varphi(x_{1})\equiv\sum_{i=1}^{n}r_{i}x_{i}\pmod{x}\qquad\text{ and }\qquad\varphi(x_{n})\equiv\sum_{i=1}^{n}q_{i}x_{i}\pmod{x}
\end{align*}
where $q_i$ and $r_i$ are integers satisfying}
\begin{align}\label{eq: gcd condition}
\gcd(q_{1},\ldots,q_{n})=\gcd(r_{1},\ldots,r_{n})=1.
\end{align}
Since $\varphi(x)=\pm x$ by Lemma~\ref{lem: Annihilator}, we have
\begin{align} \label{eq:0varphi}
{0=}\varphi\big(x_{1}(x_{1}+ax_{n}+x)\big)\equiv\Big(\sum_{i=1}^{n}r_{i}x_{i}\Big)\Big(\sum_{j=1}^{n}(aq_{j}+r_{j}){x_{j}}\Big)\pmod{x}.
\end{align}
{Here
\[
x_1^2\equiv -a'x_nx_1\pmod{x},\quad x_i^2\equiv x_{i}x_{i-1}\pmod{x} \quad(2\le i\le n)
\]
in $H^*(X(A',\b'))$. Plugging these into \eqref{eq:0varphi} and looking at the coefficients of $x_ix_j$ $(n\ge i>j\ge 1)$}, we see that
\begin{equation*}\label{eq:type3-coefficients relations}
\begin{array}{ll}
r_{n}(aq_{1}+r_{1})+ r_{1}(aq_{n}+r_{n})-r_{1}(aq_{1}+r_{1})a'=0,&\\
r_{i}(aq_{i-1}+r_{i-1})+r_{i-1}(aq_{i}+r_{i})+r_{i}(aq_{i}+r_{i})=0&(2\le i\le n),\\
r_{i}(aq_{j}+{r_{j}})+r_{j}(aq_{i}+r_{i})=0& {((i,j)\not=(n,1),\ i-j\ge 2).}
\end{array}
\end{equation*}
Let $p$ be a divisor of the integer $a$. Since $a+a'=-2$, the {equations} above reduce to
\begin{align}
2r_{1}(r_{n}+r_{1})\equiv0\pmod{p},&\label{eq:4-9} \\
r_{i}(2r_{i-1}+r_{i})\equiv 0\pmod{p}& \quad (2\leq i\leq n), \label{eq:4-8}\\
2r_{i}r_{j}\equiv0 \pmod{p}&\quad {((i,j)\not=(n,1),\ i-j\ge 2).}\label{eq:4-10}
\end{align}

{Suppose that $a$ is even. Then $r_{i}$ is even for $2\leq i\leq n$ by \eqref{eq:4-8} and hence $r_1$ is odd by \eqref{eq: gcd condition}. Therefore $2r_1(r_n+r_1)\not\equiv 0\pmod{4}$. This together with \eqref{eq:4-9} shows that $a$ is not divisible by four and hence $a\equiv 2\pmod{4}$. Since $a+a'=-2$, $a'$ is also even and the same argument as for $a$ shows that $a'\equiv 2 \pmod{4}$. However, these contradict the {assumption} $a+a'=-2$.}

{Therefore, both $a$ and $a'$ must be odd. Suppose $|a|\ge 3$ and let $p$ be an odd prime integer which divides $a$}. Then there exists $r_i$ such that $r_{i}\not\equiv 0\pmod{p}$ from~\eqref{eq: gcd condition} and then $r_{i-1}\not\equiv0\pmod{p}$ from \eqref{eq:4-9} and \eqref{eq:4-8}, where $r_0=r_n$. Therefore  $r_{i}\not\equiv0\pmod{p}$ for any $i$. {However, this contradicts \eqref{eq:4-10} when $n\ge 4$. When $n=3$,  {\eqref{eq:4-9} and \eqref{eq:4-8} have} a nontrivial {common} solution only when $p=5$. Since $a$ is odd, it follows that $a=\pm5^{u}$ for some $u\ge 1$. Therefore $|a'|\ge 3$ since $a+a'=-2$. Then the same argument as for $a$ shows that $a'=\pm5^{v}$ for some $v\geq1$. However, these contradict the {assumption} $a+a'=-2$. Thus, $|a|=|a'|=1$. However, this again contradicts $a+a'=-2$ because $a\not=a'$.}

This completes the proof of the proposition.
\end{proof}

{\subsection{Isomorphism classes and determinant}
We have studied isomorphism classes of cohomology rings in each type and end up with the following proposition.
\begin{prop} \label{prop:coho_iso_det}
If $H^*(X(A,\b))$ and $H^*(X(A',\b'))$ are isomorphic as graded rings, then $\det A=\det A'$, and the converse holds unless the value of the determinant is one.
\end{prop}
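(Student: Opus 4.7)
My plan splits along the biconditional. The converse is immediate from the per-type rigidity already established in this section: when $\det A = \det A'$ lies in $\{0\}$, $\{2\}$, or $\Z\setminus\{0,1,2\}$, the desired ring isomorphism is furnished by Lemma~\ref{lem: cohomology ring of Type 0}, Lemma~\ref{lem: cohomology ring of Type 2}, or Proposition~\ref{prop: cohomology ring of Type 3} respectively. The value $\det A = 1$ is excluded precisely because Lemma~\ref{lemm: cohomology type 1} reduces the Type~1 cohomological rigidity problem to the still open one for Bott manifolds.

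For the forward direction, let $\varphi\colon H^*(X(A,\b))\to H^*(X(A',\b'))$ be a graded ring isomorphism. The first step is to recover $|\det A|$ from the ring data. Lemma~\ref{lem: Annihilator} forces $\varphi(x) = \epsilon x'$ with $\epsilon = \pm 1$, since the rank-$n$ annihilator characterization of $x$ is preserved by $\varphi$ and $x'$ is primitive in $H^2$. Applying $\varphi$ to the top-degree identity $x^n = (\det A)^{n-1}\mu$ of Corollary~\ref{coro: cohomology class of deg n}(1) then yields $(\det A)^{n-1} = \pm(\det A')^{n-1}$, so $|\det A| = |\det A'|$. This settles the Type 0 case immediately, since $\det A = 0 \iff \det A' = 0$.

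The harder step is pinning down the sign of $\det A$ when it is nonzero. By Proposition~\ref{prop: classification of A and b}, the only configurations with $|\det A| = |\det A'|$ but $\det A \neq \det A'$ are: (i) both fans in Type 3 with $a + a' = -2$; (ii) Type 1 ($\det A = 1$) versus Type 3 with $\det A' = -1$; and (iii) Type 2 ($\det A = 2$) versus Type 3 with $\det A' = -2$. Case (i) is exactly what the closing modular argument in the proof of Proposition~\ref{prop: cohomology ring of Type 3} rules out. For (ii) and (iii) my plan is to mimic that same strategy: write $\varphi(x_1) \equiv \sum_j r_j x_j'$ and $\varphi(x_n) \equiv \sum_j q_j x_j' \pmod{x'}$ with $\gcd(r_1,\dots,r_n) = \gcd(q_1,\dots,q_n) = 1$, apply $\varphi$ to the Type 3 quadratic relation $x_1(x_1 + ax_n + x) = 0$ (with $a = -2$ in case (ii) and $a = -3$ in case (iii)), expand using the target's squared-relations (Type 1: $x_i'^2 = -\sum_{j<i} a_{ij}' x_i' x_j' - b_i' x_i' x'$; Type 2: $x_i'^2 = -a_i' x_i' x_{i-1}' - b_i' x_i' x'$), and use the $H^4$ basis of Corollary~\ref{coro: cohomology class of deg n}(2) to read off coefficient equations and derive a contradiction.

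The main obstacle will be case (ii): since $|\det A| = 1$, there is no prime available to reduce modulo, so the contradiction must instead be extracted from the primitivity conditions $\gcd(r_j) = \gcd(q_j) = 1$ together with the structural mismatch between the cyclic Type 3 relation and the triangular Type 1 target, whose $b_i' = 1 + \sum_{j<i} a_{ij}'$ prevents any uniform rescaling of the coefficient equations. Case (iii) should be milder: the prime $2$ is available, and combined with the $b_i' = (1+a_i')/2$ pattern of Type 2, a reduction modulo $2$ should complete the argument.
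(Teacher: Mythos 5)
Your skeleton is correct and, up to the point where the real work begins, coincides with the paper's: the converse via Lemmas~\ref{lem: cohomology ring of Type 0} and~\ref{lem: cohomology ring of Type 2} and Proposition~\ref{prop: cohomology ring of Type 3}, the reduction to $|\det A|=|\det A'|$ via Lemma~\ref{lem: Annihilator} and Corollary~\ref{coro: cohomology class of deg n}, and the identification of the three residual conflicts (i)--(iii) are all right, and (i) is indeed exactly Proposition~\ref{prop: cohomology ring of Type 3}. The gaps are in (ii) and (iii), i.e.\ in everything you left as a plan rather than an argument.

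For (iii) the proposed reduction modulo $2$ cannot work. In the quotient rings $H^*(\,\cdot\,)/(x)$ the Type 3 relations with $a=-3$ read $x_1^2=3x_nx_1$ and $x_i^2=x_ix_{i-1}$, while the Type 2 relations read $x_1^2=-a_1'x_nx_1$ and $x_i^2=-a_i'x_ix_{i-1}$ with $a_i'=\pm1$; modulo $2$ these presentations coincide, so no congruence mod $2$ extracted from your coefficient equations can separate the two rings. The prime that does the work is $3$, the divisor of $a=-3$, exactly as in the proof of Proposition~\ref{prop: cohomology ring of Type 3}. The paper in fact avoids this computation entirely: it observes that the proof of Proposition~\ref{prop: cohomology ring of Type 3} is valid for every integer $a$, that the Type-3-shaped matrix with $a=1$ is of Type 2, and that by Lemma~\ref{lem: cohomology ring of Type 2} every Type 2 ring is isomorphic to that one, so the already-proved argument disposes of (i) and (iii) simultaneously. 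For (ii) you offer no argument at all, only the hope that primitivity plus a ``structural mismatch'' yields a contradiction; this is the genuinely hard case, and the missing ingredient is an invariant rather than more coefficient equations. The paper passes to $H^*(X(A,\b))/(x)$ (legitimate because any isomorphism sends $x$ to $\pm x$ by Lemma~\ref{lem: Annihilator}) and notes that the Type 1 quotient contains the nonzero element $x_1$ with $x_1^2=0$ there (since $b_1=1$), whereas in the Type 3 quotient with $a=-2$ a direct check using $x_1^2=2x_nx_1$, $x_i^2=x_ix_{i-1}$ and the basis of $H^4$ from Corollary~\ref{coro: cohomology class of deg n} shows that no nonzero degree-two element has vanishing square. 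Without something of this kind your case (ii) does not close.
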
}
\begin{proof}
{We note that the proof of Proposition~\ref{prop: cohomology ring of Type 3} holds for all values of $a$, i.e. even for $a=0, \pm 1$, and the matrix $A$ in Type~3 is of Type 0, 1 and~2 when $a=-1, 0$ and~$1$. We also know that there is only one isomorphism class in the cohomology rings in Type~0 and~2 by Lemmas~\ref{lem: cohomology ring of Type 0} and~\ref{lem: cohomology ring of Type 2}, and if $H^*(X(A,\b))$ and $H^*(X(A',\b'))$ are isomorphic as graded rings, then $|\det A|=|\det A'|$ by Corollary~\ref{coro: cohomology class of deg n}.
Therefore, it suffices to prove that $H^*(X(A,\b))$ for $A$ with $a=-2$ in Type~3 is not isomorphic to any cohomology ring in Type 1.}

{
If $H^*(X(A,\b))$ above is isomorphic to a cohomology ring $H^*(X(A',\b'))$ in Type~1, then the quotient ring $H^*(X(A,\b))/(x)$ is isomorphic to $H^*(X(A',\b'))/(x)$ because any isomorphism sends $x$ to $x$ up to sign by Lemma~\ref{lem: Annihilator}. The element $x_1$ in $H^*(X(A',\b'))/(x)$ is nonzero but its square vanishes. Therefore, it suffices to show that if $\alpha$ is a nonzero degree two element in $H^*(X(A,\b))/(x)$, then $\alpha^2$ does not vanish in $H^*(X(A,\b))/(x)$. Write $\alpha=\sum_{i=1}^n c_ix_i$ with integers $c_i$. Since $x_1^2=2x_nx_1$ and $x_i^2=x_ix_{i-1}$ for $2\le i\le n$ in $H^*(X(A,\b))/(x)$, we have
\[
\begin{split}
(\sum_{i=1}^nc_ix_i)^2&=\sum_{i=1}^nc_i^2x_i^2+2\sum_{1\le j<i\le n}c_ic_jx_ix_j\\
&=2(c_1^2+c_nc_1)x_nx_1+\sum_{i=2}^n(c_i^2+2c_ic_{i-1})x_ix_{i-1}+2\sum_{\substack{(i,j)\not=(n,1)\\ i-j\ge 2}}c_ic_jx_ix_j.
\end{split}
\]
Therefore, if $(\sum_{i=1}^nc_ix_i)^2=0$ in $H^*(X(A,\b))/(x)$, then we have
\[
c_1(c_1+c_n)=0,\quad c_i(c_i+2c_{i-1})=0 \ (2\le i\le n),\quad c_ic_j=0 \ ((i,j)\not=(n,1),\ i-j\ge 2)
\]
since $x_ix_j$ $(1\le j<i\le n)$ are an additive basis of $H^*(X(A,\b))/(x)$. An elementary check shows that the {equations} above have only {the} trivial solution, proving the proposition.}
\end{proof}

\section{Smooth classification in Type 0} \label{sec:proof of diffeo type 0}

We have seen that the cohomology rings of the toric manifolds of Type 0 are isomorphic to each other (Lemma~\ref{lem: cohomology ring of Type 0}). The purpose of this section is to prove the following proposition using the quotient construction of toric manifolds.
\begin{prop} \label{prop:diffeomorphism class of Type 0}
All the toric manifolds of Type 0 are diffeomorphic to each other.
\end{prop}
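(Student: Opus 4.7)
The plan is to reduce to a single elementary move on $\b$ and then construct the diffeomorphism via the quotient construction of toric manifolds.

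The integer vectors $\b \in \Z^n$ with $\sum_i b_i = 1$ form an affine $\Z$-sublattice of $\Z^n$, and two such vectors differ by an element of the zero-sum lattice $L := \{c \in \Z^n : \sum_i c_i = 0\}$. Since $L$ is generated (with indices taken modulo $n$) by the vectors $\e_{j+1} - \e_j$ for $j = 1, \dots, n$, induction on the number of elementary moves reduces the problem to proving that $X(A, \b)$ and $X(A, \b')$ are diffeomorphic whenever $\b' = \b + \e_{j+1} - \e_j$ for some $j$.

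For this elementary step I would invoke the quotient construction $X(A, \b) = U(\vcd)/K_\b$ with $K_\b := \ker \lambda_{\cV} \cong (\C^*)^{n+1}$. Parameterizing $K_\b$ by $(t_1, \dots, t_n, s) \in (\C^{*})^{n+1}$, its action on $U(\vcd) \subset \C^{2n+1}$ with coordinates $(u_1, \dots, u_n, v_1, \dots, v_n, w)$ reads $u_k \mapsto (t_k/t_{k-1}) s^{b_k} u_k$, $v_k \mapsto t_k v_k$, $w \mapsto sw$, using cyclic indexing $t_0 := t_n$. A short direct check shows that the elementary matrix $M = I + E_{n+j,\, 2n+1} \in \GL_{2n+1}(\Z)$, inducing the automorphism $h \mapsto h \cdot h_{2n+1}^{\e_{n+j}}$ of $(\C^*)^{2n+1}$, carries $K_\b$ onto $K_{\b'}$. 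If I can build an $M$-equivariant diffeomorphism $\tilde M \colon U(\vcd) \to U(\vcd)$, it will descend to the sought diffeomorphism $X(A, \b) \to X(A, \b')$.

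Constructing $\tilde M$ is the main technical step, and I expect this to be the hard part. The naive algebraic lift $\tilde M(z)_{n+j} := z_{n+j} z_{2n+1}$ (other coordinates unchanged) is $M$-equivariant, but it collapses the stratum $\{z_{2n+1} = 0\}$ and so is not a diffeomorphism. The plan is to replace it by a smooth modification which, on the bulk region $\{|z_{2n+1}| \geq \epsilon\}$, implements the rotation $z_{n+j} \mapsto z_{n+j} \cdot z_{2n+1}/|z_{2n+1}|$, and which blends smoothly to the identity on $\{|z_{2n+1}| \leq \epsilon/2\}$. The blending is compatible with $U(\vcd)$ thanks to the structural fact that at every point of $U(\vcd)$ with $z_{2n+1} = 0$ the coordinates $u_1, \dots, u_n$ are all nonzero, since $\{k, 2n+1\}$ is a non-face of $\vcd$ for each $k \in [n]$; the $K_\b$-orbits through such points are therefore free and large enough that the failure of strict $M$-equivariance in the blending collar is absorbed at the level of orbits, leaving a well-defined smooth bijection on the quotient. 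The delicate point, which is where I expect the proof to require the most work, is verifying that the blending correction lies tangent to the $K_\b$-orbit foliation uniformly on $U(\vcd)$, so that the map induced on quotients is a genuine diffeomorphism and not merely a continuous bijection.
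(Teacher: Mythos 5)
There is a genuine gap, and it sits exactly at the step you flag as the hard part: the blending cannot be carried out for a single elementary move, because it is obstructed by $\pi_1(\PU(n))\cong\Z/n$. Concretely, the region $\{|w|<1/\sqrt{2}\}$ of the quotient (equivalently $\{z_i\neq 0 \text{ for all } i\}$) is diffeomorphic to $\CP^{n-1}\times\C$ with the $\CP^{n-1}$ factor carrying the homogeneous coordinates $[v_1,\dots,v_n]$, and your unitary twist $v_j\mapsto (w/|w|)\,v_j$ descends there to the loop $w\mapsto \diag(1,\dots,w/|w|,\dots,1)$ in $\PU(n)$. This loop is a \emph{generator} of $\pi_1(\PU(n))\cong\Z/n$ (its lift to $\SU(n)$ ends at $e^{-2\pi i/n}E_n$), so it cannot be homotoped to the identity inside $\PU(n)$, and hence the twist on $\{|w|\geq\epsilon\}$ cannot be capped off by the identity on $\{|w|\leq\epsilon/2\}$. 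Escaping into all of $\mathrm{Diff}(\CP^{n-1})$ would require knowing that this class dies in $\pi_1(\mathrm{Diff}(\CP^{n-1}))$, which you do not establish (and which already fails for $n=2$, where $\mathrm{Diff}(\CP^1)\simeq O(3)$). A secondary problem is that the ``identity near $w=0$'' portion of your map upstairs does not carry $K_{\b}$-orbits to $K_{\b'}$-orbits on the punctured collar $\{0<|w|<\epsilon/2\}$ (the element $M(k)k^{-1}$ rescales $v_j$ by $s$, which moves points off the $K_{\b}$-orbit wherever $v_j\neq 0$), so the appeal to ``absorption at the level of orbits'' does not produce a well-defined map on the quotient there.

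The missing idea, which is how the paper gets around exactly this obstruction, is to \emph{not} reduce to elementary moves but instead to first normalize $\b$ using a cyclic permutation of the fan data. A cyclic shift of the indices is an automorphism of the Type~0 data, hence a variety isomorphism, and it changes the residue $\sum_{i=1}^{n-1}(n-i)b_i \bmod n$ by an arbitrary amount; one can therefore assume this residue vanishes for both manifolds being compared. After that normalization the clutching loop becomes $w\mapsto\diag(w^{c_1},\dots,w^{c_{n-1}},1)$ with $\sum_k c_k\equiv 0\pmod n$, which lifts to the simply connected group $\SU(n)$ and is therefore null-homotopic in $\PU(n)$; only then can the twist on $\{|w|\geq 1\}$ be extended over the disk. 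Your elementary move changes the residue by $\pm 1$, which is precisely the nonvanishing case, so the step-by-step strategy is not just technically delicate but structurally blocked. The rest of your setup (quotient construction, the automorphism $M$ intertwining $K_{\b}$ and $K_{\b'}$, the two-chart decomposition along $\{w\neq 0\}$ and $\{|w|\text{ small}\}$) matches the paper's and is fine; the fix is to replace the elementary-move induction by the cyclic-permutation normalization.
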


{Remember} that the matrix $[A\mid\b]$ in Type 0 is of the form
\begin{equation*}
\begin{bmatrix}
1 & 0 & \dots &0& -1 &b_{1}\\
-1 & 1 & \dots &0& 0&b_{2}\\
\vdots& \ddots &\ddots &\vdots & \vdots&\vdots \\
0 & 0 & \dots & 1 & 0& {b_{n-1}}\\
0 & 0 & \dots & -1 & 1&b_{n}
\end{bmatrix}
\text{ with }\sum_{i=1}^nb_i=1.
\end{equation*}
We add the first row to the second row {in} $[-E_{n}\mid A\mid\b]$, and then add the second row to the third row {in} the new matrix. We repeat the process {and end up} with the matrix
\begin{equation} \label{eq:matrix ci}
\begin{bmatrix}
-1 & & & &1 & & & &-1 & c_1 \cr
-1 & -1 & & & &1 & & &-1 & c_2 \cr
\vdots& &\ddots & & & &\ddots & &\vdots & \vdots \\
-1 & -1 & \dots &-1 \qquad & & & & 1 &-1 & c_{n-1} \\
-1 & -1 & \dots &-1 \ -1& & & & & & 1
\end{bmatrix},\quad c_k=\sum_{i=1}^kb_i.
\end{equation}
Since the fan of Type {0} is isomorphic to the fan determined by the matrix in~\eqref{eq:matrix ci}, we {henceforth} consider the toric manifold associated with the matrix in~\eqref{eq:matrix ci}, denoted by $X_{\bf{c}}$, where $\c=[c_{1},\ldots,c_{n-1}]^{T}$.
Note that
$$
\sum_{k=1}^{n-1}c_{k}=\sum_{i=1}^{n-1}(n-i)b_{i}.
$$
On the other hand, for a cyclic permutation $\sigma(i)=i+\ell$ on $[n]$, {we have
\begin{equation*}
\begin{split}
&\sum_{i=1}^{n-1}(n-i)b_{\sigma(i)}-\sum_{i=1}^{n-1}(n-i)b_i\equiv -\sum_{i=1}^nib_{\sigma(i)}-\Big(-\sum_{i=1}^nib_i\Big)\\
\equiv& -\sum_{i=1}^nib_{i+\ell}+\sum_{i=1}^nib_i\equiv -\sum_{j=1}^n(j-\ell)b_j+\sum_{i=1}^nib_i\\
\equiv&\ \ell\sum_{j=1}^nb_j\equiv \ell\pmod{n}.
\end{split}
\end{equation*}}
Hence, we may restrict our attention to the toric manifolds determined by {a} vector~$\c$ satisfying $\sum_{k=1}^{n-1}c_k\equiv 0\pmod{n}$.

Let us construct the toric manifold $X_\c$ associated with $\c$ by using the quotient construction {explained in Section~\ref{sec:Preliminaries}}. {Remember} that $\vcd$ is the simplicial complex whose minimal non-faces are $\{i,n + i\}, \{i,2n + 1\}$ for $1\leq i\leq n$ and $\{n + 1,...,2n\}$. Letting $(z_{1},\ldots,z_{n},w_{1},\ldots,w_{n},w)$ be the coordinates of $\C^{2n+1}$, we {define}
\begin{align*}\label{eq: complements of coordinate arrangements in Cn}
Z=\bigcup_{i=1}^{n}\{z_{i}=w_{i}=0\}\cup\{w_{1}=\cdots=w_{n}=0\}\cup\bigcup_{i=1}^{n}\{z_{i}=w=0\}.
\end{align*}
Let $\lambda_{\c}:(\C^{\ast})^{2n+1}\to(\C^{\ast})^{n}$ be the homomorphism determined by~\eqref{eq:matrix ci}, that is,
\[
\begin{split}
&\lambda_\c(g_1,\dots,g_n,h_1,\dots,h_{n}, h)\\
=&(g_1^{-1}h_1h_n^{-1}h^{c_1},(g_1g_2)^{-1}h_2h_n^{-1}h^{c_2},\dots,(g_1\cdots g_{n-1})^{-1}h_{n-1}h_n^{-1}h^{c_{n-1}}, (g_1\dots g_n)^{-1}h).
\end{split}
\]
Then the kernel of $\lambda_\c$ is given by
\[
\{(g_1,\dots,g_n,g_1h_nh^{-c_1},g_1g_2h_nh^{-c_2},\dots,(g_1\cdots g_{n-1})h_nh^{-c_{n-1}},h_n,h)\mid h=g_1\cdots g_n\}.
\]
Hence, we get
$$
X_{\c}=(\C^{2n+1}\setminus Z)/\ker\lambda_{\bf c}.
$$

We set
\begin{align*} \label{eq: subset of Xc}
X_{\c}^{-}=X_{\c}\cap\{w\neq0\},\quad X_{\c}^{+}=X_{\c}\cap\bigcap_{i=1}^{n}\{z_{i}\neq0\}.
\end{align*}
We have $X_{\c}=X_{\c}^{-}\cup X_{\c}^{+}$.

\begin{proof}[Proof of Proposition~\ref{prop:diffeomorphism class of Type 0}]
It is sufficient to prove that
\begin{center}
$X_{\c}$ is diffeomorphic to $X_{\0}$, where $\0:=[0,\ldots,0]^{T}\in\Z^{n-1}$,
\end{center}
since it means that $X(A,\b)$ is diffeomorphic to $X(A,\b')$, where $\b=[b_{1},\ldots,b_{n}]^{T}$ with $\sum_{i=1}^{n}b_{i}=1$ and $\b'=[0,\ldots,0,1]^{T}$.

We consider {the} diffeomorphism $\varphi_{\c}:X_{\c}^{-}\to X_{\0}^{-}$ given by
\begin{equation*} \label{eq: diffeomorphism phi_b}
\varphi_\c([z_1,\dots,z_n,w_1,\dots,w_n,w]_\c)=[z_1,\dots,z_n,{w}^{c_1}w_1,\dots,{w}^{c_{n-1}}w_{n-1},w_n,w]_{\0}
\end{equation*}
where $[~]_{\c}$ and $[~]_{\0}$ are points in $X_{\c}$ and $X_{\0}$, respectively, and try to extend it to a diffeomorphism from $X_{\c}$ to $X_{\0}$. We note {that} $X_{\c}^{+}$ is diffeomorphic to $\CP^{n-1}\times \C$ via the map
\begin{align*}
&\psi_{\c}([z_{1},\ldots,z_{n},w_{1},\ldots,w_{n},w]_{\c})\\
&=([z_{1}^{-1}(z_{1}\cdots z_{n})^{c_{1}}w_{1},\ldots,(z_{1}\cdots z_{n-1})^{-1}(z_{1}\cdots z_{n})^{c_{n-1}}w_{n-1},w_{n}],(z_{1}\cdots z_{n})^{-1}w).
\end{align*}
Similarly, ${X_{\0}^+}$ is diffeomorphic to $\CP^{n-1}\times \C$ via the map
\begin{align*}
&\psi_{\0}([z_{1},\ldots,z_{n},w_{1},\ldots,w_{n},w]_{\0})\\
&\qquad=([z_{1}^{-1}w_{1},\ldots,(z_{1}\cdots z_{n-1})^{-1}w_{n-1},w_{n}],(z_{1}\cdots z_{n})^{-1}w).
\end{align*}
Therefore, $\psi_{\0}\circ\varphi_{\c}\circ\psi_{\c}^{-1}$ is a self-diffeomorphism of $\CP^{n-1}\times\C^{\ast}$ given by
\begin{align*}
&\psi_{\0}\circ\varphi_{\c}\circ\psi_{\c}^{-1}([w_{1},\ldots,w_{n}],w)\\
=&\psi_{\0}\circ\varphi_{\c}([1,\ldots,1,w_{1},\ldots,w_{n},w]_{\c})\\
=&\psi_{\0}([{1,\dots,1,w^{c_1}w_1,\dots,w^{c_{n-1}}w_{n-1},w_n,w}]_{\0})\\
=&([w^{c_{1}}w_{1},\ldots,w^{c_{n-1}}w_{n-1},w_{n}],w),
\end{align*}
that is,
\begin{align}\label{eq:self-diffeo of Pn times C}
([w_{1},\ldots,w_{n}],w)\longmapsto([w^{c_{1}}w_{1},\ldots,w^{c_{n-1}}w_{n-1},w_{n}],w).
\end{align}
{If this self-diffeomorphism of $\CP^{n-1}\times\C^*$ extends to a self-diffeomorphism of $\CP^{n-1}\times\C$, then we are done. But this is impossible. To avoid this difficulty, we restrict the map \eqref{eq:self-diffeo of Pn times C} to $\CP^{n-1}\times (\C\backslash \Int D^2)$, where $D^2$ is the unit disk of $\C$, and find its extension to a self-diffeomorphism of $\CP^{n-1}\times\C$.}

We regard the map \eqref{eq:self-diffeo of Pn times C} as the homomorphism
\begin{align*}
\rho:\C^{\ast}\to \PU(n),\quad w\mapsto\diag(w^{c_{1}},\ldots,w^{c_{n-1}},1)
\end{align*}
where $\PU(n)$ denotes the quotient of the unitary group $\U(n)$ by its center and $\diag(~)$ means a diagonal matrix. It suffices to show that $\rho$ restricted to $(\C\setminus\Int D^{2})$, denoted {by} $\bar{\rho}$, extends to a continuous map from $\C$ to $\PU(n)$.\footnote{Let $M$ and $N$ be smooth manifolds {and}  $f:M\to N$ be a continuous map. If $f$ is smooth on a closed subset $A$ in $M$, then the {map $f$ restricted to $A$} extends to a smooth map. We refer the reader to \cite[Theorem 2.23]{mukh15} for the proof. We apply this theorem for $M=\C$, $N=\PU(n)$ and $A=\C\setminus\Int D^{2}$.} For every integer~$m$,
$$
\diag(w^{c_{1}},\ldots,w^{c_{n-1}},1)=\diag(w^{c_{1}+m},\ldots,w^{c_{n-1}+m},w^{m})\qquad \text{in $\PU(n)$}.
$$
{Since $\sum_{k=1}^{n-1}c_{k}\equiv 0\pmod{n}$, this implies that the homomorphism $\rho$ above factors through} the special unitary group $\SU(n)$. Since $\SU(n)$ is simply connected, the restriction of $\bar{\rho}$ to the boundary ${S^{1}}$ of $D^2$ is null homotopic. Therefore, $\bar{\rho}$ extends to a map $\C\to\PU(n)$ continuously.
\end{proof}

\section{Smooth classification in Type 2}\label{sec:proof of diffeo type 2}

We have seen that the cohomology rings of the toric manifolds of Type 2 are isomorphic to each other (Lemma~\ref{lem: cohomology ring of Type 2}). The purpose of this section is to prove the following proposition using moment-angle manifolds associated with an $n$-cube with one vertex cut, which is a deformation retract of $\C^{2n+1}\backslash Z$ used in the previous section.

\begin{prop} \label{prop: diffeomorphism class of Type 2}
All the toric manifolds of Type 2 are diffeomorphic to each other.
\end{prop}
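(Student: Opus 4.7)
The plan is to view each $X(A,\b)$ of Type 2 as a quotient $\ZP/K(A,\b)$, where $\ZP$ is the moment-angle manifold of $P=\vt(I^n)$ (a closed smooth $(3n+1)$-manifold with $T^{2n+1}$-action) and $K(A,\b)\subset T^{2n+1}$ is the compact $(n+1)$-dimensional subtorus determined by the fan data $(A,\b)$. To establish $X(A,\b)\cong X(A',\b')$ as smooth manifolds, it suffices to exhibit a self-diffeomorphism $\Phi\colon\ZP\to\ZP$ which is equivariant with respect to an automorphism $\alpha$ of $T^{2n+1}$ carrying $K(A,\b)$ to $K(A',\b')$; then $\Phi$ descends to the desired diffeomorphism of quotients.

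Next, I would reduce to an elementary move. Following the combinatorial heart of Lemma~\ref{lem: cohomology ring of Type 2}, whenever the sequence $(a_1,\dots,a_n)$ has two or more entries equal to $-1$, one can locate indices $i<j$ with $a_i=a_j=-1$, $a_{i+1}=\cdots=a_{j-1}=1$, and $j-i$ even. I would prove the proposition just for the flip that replaces such a block with $a'_i=\cdots=a'_j=1$; cyclic permutations of the sequence are realized by automorphisms of $\vcd$ (hence of $\ZP$), and iterating the flip transports any admissible Type 2 sequence to the canonical form with at most one $-1$, which depends only on the parity of $n$. So proving smooth equivalence under this one elementary flip suffices.

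The core step is then to upgrade the polynomial-ring automorphism $\varphi$ of Lemma~\ref{lem: cohomology ring of Type 2}, given by $\varphi(x_k)=-(x_k+x)$ for $i\le k\le j-1$ and $\varphi(x_k)=x_k$ otherwise, to an ambient self-diffeomorphism of $\ZP$ that covers a corresponding element $\alpha\in\GL_{2n+1}(\Z)$ with $\alpha(K(A,\b))=K(A',\b')$. The key proposition proved in the Appendix supplies exactly such a construction of self-diffeomorphisms of $\ZP$ from admissible algebraic data, and I would invoke it here. Concretely, one writes down the candidate self-map of $\ZP$ on a large open set (the locus where the coordinates indexed by facets outside the affected block are generic), verifies its equivariance with respect to $\alpha$, and then extends across the remaining strata.

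The main obstacle is precisely this extension across the strata of $\ZP$ where one or more of the coordinates $z_i,w_i,w$ vanish, analogous to the difficulty overcome in the Type 0 proof by factoring through $\SU(n)$ and exploiting $\pi_1(\SU(n))=0$. Here the analogous obstruction is a primary homotopy-theoretic obstruction on links of strata of $\ZP$, and the parity condition \emph{$j-i$ even} should be exactly what forces the relevant obstruction class to vanish, paralleling the role this parity played in showing $\varphi(\prod_k x_k)=\prod_k x_k$ in cohomology. I expect the cleanest route is to isolate the local model of the flip (a $\CP^{j-i-1}$-bundle factor inside $\ZP$, twisted by the line bundle corresponding to $x$) and to exhibit the diffeomorphism there via a null-homotopy in a unitary group of the appropriate rank, mirroring the Type 0 argument; the global piece then glues in by torus-equivariance.
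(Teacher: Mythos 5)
Your setup (realizing $X(A,\b)$ as $\ZP/\ker\lambda_A^S$ via the Appendix, reducing to the single block-flip $(a_i,\dots,a_j)=(-1,1,\dots,1,-1)\mapsto(1,\dots,1)$ with $j-i$ even) matches the paper, but the central step is based on a false premise: there is, in general, \emph{no} self-diffeomorphism $\Phi$ of $\ZP$ that is equivariant with respect to an automorphism $\alpha$ of $(S^1)^{2n+1}$ carrying $\ker\lambda_A^S$ to $\ker\lambda_{A'}^S$. Such a $\Phi$ would descend to a weakly equivariant homeomorphism of the quasitoric manifolds $X(A,\b)\to X(A',\b')$ covering a self-homeomorphism of $\vt(I^n)$, and by the Davis--Januszkiewicz classification this forces the characteristic pairs to agree up to $\GL_n(\Z)$, an automorphism of $\vcd$ (a permutation of $[n]$, by Remark~\ref{rema:n3}), and sign changes $\v_k\mapsto-\v_k$. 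A short computation shows these moves can only permute the necklace $(a_1,\dots,a_n)$ cyclically: since $\v_i=-\e_i$ for $i\in[n]$, the matrix $R$ must be a signed permutation, and the condition $R(\pm\b)=\b'$ with $b_i=(1+a_i)/2\in\{0,1\}$ pins down the positions of the $-1$'s. So for two necklaces that are not cyclic shifts of each other the desired $\alpha$-equivariant $\Phi$ cannot exist, and the Appendix does not (and cannot) supply one; it only identifies $X(A,\b)$ with $\ZP/\ker\lambda_A^S$ via a deformation of the \emph{real} fan data, which does not move the integral subtorus $\ker\lambda_A^S$.

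What the paper actually does is necessarily non-equivariant at the last step. It covers $\ZP/\ker\lambda_A^S$ by the two open sets $\{|w|<1/\sqrt{2}\}$ and $\{w\neq 0\}$. Lemma~\ref{lem: varphi A induces diffeomorphism} identifies the first piece with a fixed model $L$ independent of the necklace; Lemma~\ref{lem: diffeo ZP cap w nonzeo to ZP cap w' nonzero} gives a weakly equivariant diffeomorphism over $\{w\neq0\}$ only, built from complex conjugation on the block together with twists by powers of $w/|w|$ (which is exactly why it does not extend over $w=0$ upstairs). The transition map, computed in $L$ and restricted to a level set $|u|=r\cong\R P^{2n-1}$, negates $j-i$ of the real coordinates; because $j-i$ is even this lies in the identity component, so it is isotopic to the identity and the gluing extends over $L$ --- but the extension is the identity near $\{u=0\}$ and is not torus-equivariant. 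So your instinct that the parity of $j-i$ kills the obstruction is right, and the analogy with the $\SU(n)$ trick in Type 0 is apt, but the obstruction lives on the quotient (an isotopy class in $\mathrm{Diff}(\R P^{2n-1})$), not on links of strata of $\ZP$, and insisting on an equivariant extension upstairs would contradict the classification of the fans in Theorem~\ref{theo:classification as varieties}.
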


First, let us review the definition of the moment-angle manifold $\ZP$ obtained from a simple polytope $P$.
Consider an $n$-dimensional simple polytope
\[
P=\{\x=(x_{1},\ldots,x_{n})\in\R^{n}\mid\s<\n_{i},\x>+\gamma_{i}\geq 0\quad\text{for }i=1,\ldots,m\},
\]
where $\n_i\in \R^n$, $\gamma_i\in \R$ and $\langle\ ,\ \rangle$ denotes the standard scalar product on $\R^n$.
{We assume that $m$ agrees with the number of facets of $P$.} Define the map
\[
\iota_{P}:\R^{n}\to\R^{m},
\quad
\iota_{P}(\x)=(\s<\n_{1},\x>+\gamma_{1},\ldots,\s<\n_{m},\x>+\gamma_{m}).
\]
It embeds $P$ into $\R_{\geq 0}^{m}$. {Then the moment-angle manifold $\ZP$ associated with $P$ is defined to be the fiber product of} the commutative diagram
\begin{equation*}
\begin{CD}
\ZP @>>> \C^m\\
@VVV @VV\pi V\\
P@>\iota_P>> \R^m_{\ge 0}
\end{CD}
\end{equation*}
where $\pi(z_{1},\ldots,z_{m})=(|z_{1}|^{2},\ldots,|z_{m}|^{2})$.
{We note that $\ZP$ is invariant under the standard action of $(S^1)^m$ on $\C^m$.}
The moment-angle manifold associated with an $n$-cube is the product of $n$ copies of $3$-spheres.

Let us consider the polytope $P$ presented as follows:
\begin{equation}\label{eq: polytope}
P=\left\{(x_{1},\ldots,x_{n})\in\R^{n}\,\middle|\, 0\leq x_{i}\leq 1,~\sum_{i=1}^{n}x_{i}\leq n-\frac{1}{2}\right\}.
\end{equation}
Then $P$ is an $n$-cube with one vertex cut {$\vt(I^n)$ as a manifold with corners}, so the boundary complex of the {simplicial polytope dual to} $P$ is {isomorphic to} our simplicial complex $\vcd$. One can see that the moment-angle manifold $\ZP$ associated with $P$ can be described as
\begin{align} \label{eq:ZP}
\left\{(z_{1},\ldots,z_{n},w_{1},\ldots,w_{n},w)\in\C^{2n+1}\,\middle|\, |z_{i}|^{2}+|w_{i}|^{2}=1,~\sum_{i=1}^{n}|w_{i}|^{2}=|w|^{2}+\frac{1}{2}\right\}.
\end{align}
As is well-known, $\ZP$ is a deformation retract of $\C^{2n+1}\backslash Z$, where
$$Z=\bigcup_{i=1}^{n}\{z_{i}=w_{i}=0\}\cup\{w_{1}=\cdots=w_{n}=0\}\cup\bigcup_{i=1}^{n}\{z_{i}=w=0\}$$
as before.

Suppose that $(A,\b)$ is of Type 2. We define {a} homomorphism $\lambda_{A}\colon (\C^{\ast})^{2n+1}\to(\C^{\ast})^{n}$ by
\begin{align*}
\lambda_{{A}}(g_{1},\ldots,g_{n},h_{1},\ldots,h_{n},h) =(g_{1}^{-1}h_{1}h_{n}^{a_{1}}h^{b_1},g_{2}^{-1}h_{1}^{a_{2}}h_{2}g^{b_{2}},\ldots,g_{n}^{-1}h_{n-1}^{a_{n}}{h_n}h^{b_{n}}).
\end{align*}
Then
\begin{align} \label{eq: kernel lambda A}
\ker\lambda_{A}=\{h_{1}h_{n}^{a_{1}}h^{b_1},h_{1}^{a_{2}}h_{2}{h}^{b_{2}},\ldots,h_{n-1}^{a_{n}}{h_n}{h}^{b_{n}},h_{1},\ldots,h_{n},h)\}
\end{align}
and the quotient construction of toric manifolds tells us that
$$X(A,\b)=(\C^{2n+1}\backslash Z)/\ker\lambda_{A}.$$
The following proposition is a real analog of this {quotient} construction. It is well-known for experts, but we shall give a proof in Appendix for the convenience of the reader.

\begin{prop} \label{prop: keyproposition}
The toric manifold $X(A,\b)$ of Type 2 is {$(S^1)^n$-equivariantly} {diffeomorphic} to $\ZP/\ker\lambda_A^S$, where $\lambda_A^S$ is the restriction of $\lambda_A$ to $(S^1)^{2n+1}$.
\end{prop}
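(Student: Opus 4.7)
The plan is to realize $\ZP$ as a cross-section for the non-compact factor of $\ker\lambda_A$ acting on $\C^{2n+1}\setminus Z$, and then transport the complex quotient description $X(A,\b)=(\C^{2n+1}\setminus Z)/\ker\lambda_A$ into the compact setting. Applying the polar decomposition $\C^*\cong S^1\times\R_{>0}$ componentwise in the parametrization \eqref{eq: kernel lambda A} yields a factorization $\ker\lambda_A=\ker\lambda_A^S\cdot K_+$, where $K_+\cong\R_{>0}^{n+1}$ is the subgroup obtained by taking $(h_1,\dots,h_n,h)$ positive real. Since this decomposition is $(S^1)^{2n+1}$-equivariant, once I know that $\ZP$ meets every $K_+$-orbit on $\C^{2n+1}\setminus Z$ in exactly one point, the inclusion $\ZP\hookrightarrow\C^{2n+1}\setminus Z$ will descend to a smooth $(S^1)^n$-equivariant diffeomorphism $\ZP/\ker\lambda_A^S\to X(A,\b)$.

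Writing $\sigma_i:=h_i^2>0$ and $\rho:=h^2>0$, the $K_+$-action multiplies the squared moduli of a point $(z_1,\dots,z_n,w_1,\dots,w_n,w)\in\C^{2n+1}\setminus Z$ as
\[
|z_i|^2\mapsto\sigma_{i-1}^{a_i}\sigma_i\rho^{b_i}|z_i|^2,\quad |w_i|^2\mapsto\sigma_i|w_i|^2,\quad |w|^2\mapsto\rho|w|^2
\]
(with cyclic indexing mod $n$), and the image lies in $\ZP$ precisely when
\[
\sigma_i\bigl(\sigma_{i-1}^{a_i}\rho^{b_i}|z_i|^2+|w_i|^2\bigr)=1\ (1\le i\le n),\qquad\sum_{i=1}^n\sigma_i|w_i|^2-\rho|w|^2=\tfrac12.
\]
The whole proof reduces to showing this system of $n+1$ equations has a unique solution $(\sigma_1,\dots,\sigma_n,\rho)\in\R_{>0}^{n+1}$ for every starting point in $\C^{2n+1}\setminus Z$. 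My strategy is an open/closed argument. For openness I would compute the Jacobian of the left-hand sides with respect to $(\sigma,\rho)$: the sparse cyclic form of $A$ in Type~2 from Proposition~\ref{prop: classification of A and b}, together with $a_i=\pm1$ and $b_i\in\{0,1\}$, makes this a cyclic bidiagonal matrix with one extra row and column, whose determinant admits a direct inductive evaluation with a definite sign. For closedness, the exclusions $(z_i,w_i)\ne(0,0)$, $(w_1,\dots,w_n)\ne 0$, and $(z_i,w)\ne(0,0)$ encoded by $\C^{2n+1}\setminus Z$ prevent any $\sigma_i$ or $\rho$ from drifting to $0$ or $\infty$ along an approximate-solution sequence; combined with openness, this forces the solution set to be a singleton.

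The principal obstacle is controlling the cyclic coupling term $\sigma_{i-1}^{a_i}$: when $a_i=-1$, blow-up of $\sigma_{i-1}$ can drag $\sigma_i$ toward zero, so the properness analysis must be keyed to the sign pattern $(a_1,\dots,a_n)$ allowed by Type~2 (in particular, to the groupings of consecutive indices with $a_i=-1$ used in the proof of Lemma~\ref{lem: properties of cohomology class of deg 2}). Once the cross-section claim is established, smoothness of the induced quotient map and its $(S^1)^n$-equivariance are formal: $\ZP$ is a smooth compact submanifold of $\C^{2n+1}$ by transversality of its defining equations in \eqref{eq:ZP}, and the polar decomposition of $\ker\lambda_A$ commutes with the $(S^1)^{2n+1}$-action throughout.
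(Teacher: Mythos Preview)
Your approach is genuinely different from the paper's, and the gap you yourself flag is real.

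What the paper does in the Appendix is \emph{not} a direct cross-section argument. It first quotes the standard fact (Proposition~\ref{prop: app_1}) that the inclusion $\ZP\hookrightarrow U(\vcd)$ induces an $(S^1)^{2n+1}$-equivariant diffeomorphism $\ZP\cong U(\vcd)/\ker\lambda^\R$, where $\lambda^\R$ is built from the \emph{normal fan of $P$}, not from the Type~2 fan. It then writes down an explicit one-parameter family $A(t)$ with $A(1)=A$ and $A(0)=E_n$, checks that $(\vcd,\{-\e_i,\a_i(t),\b(t)\})$ stays simplicial for every $t$, and invokes a general lemma (Lemma~\ref{lemm: app_2}) that a smooth family of simplicial fans over a fixed $\cK$ gives a fiber bundle of quotients $U(\cK)/\ker\lambda^\R(t)$ over $[0,1]$. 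This yields an abstract $(S^1)^{2n+1}$-equivariant diffeomorphism $\ZP\cong U(\vcd)/\ker\lambda_A^\R$, after which dividing by $\ker\lambda_A^S$ is formal. No equation-solving for $(\sigma,\rho)$, no Jacobian, no properness analysis keyed to the sign pattern of the $a_i$.

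Your route instead asserts that $\ZP$ is a cross-section for the action of $K_+=\ker\lambda_A^\R$, i.e.\ that the inclusion itself descends to a diffeomorphism. This is \emph{not} the content of Proposition~\ref{prop: app_1}: your $K_+$ and the normal-fan $\ker\lambda^\R$ are distinct $(n+1)$-dimensional subgroups of $(\R_{>0})^{2n+1}$, so the cross-section property for $K_+$ needs its own proof. Your open/closed sketch is plausible in outline, but neither piece is carried out: the Jacobian is only described, and the properness step---exactly the ``principal obstacle'' you identify, with the cyclic coupling when $a_i=-1$---is left to a case analysis you do not perform. Even granting a local diffeomorphism plus properness, you would still need to invoke simple connectivity of $U(\vcd)$ (or a degree argument) to pass from a covering to a bijection; you do not mention this.

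In short: the paper buys the result cheaply by deforming the fan and using Ehresmann-type fibration over $[0,1]$, at the cost of an abstract rather than inclusion-induced diffeomorphism. Your proposed argument, if completed, would give the stronger statement that $\ZP$ itself sits as a slice for $K_+$, but as written it is a plan with its hardest step unfinished.
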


We prepare some notations and a lemma we need later. Define the matrix $R$ by
\begin{align}\label{eq: the matrix R}
R=A^{-1}+\frac{1}{2}J,
\end{align}
where $J$ is the $n\times n$ matrix all of whose entries are $1$. One can easily check that every component of $A^{-1}$ is either $1/2$ or $-1/2$, so the matrix $R$ is an integer matrix. Let $\r_{i}$ be the $i$th row of $R$ and $\u_{i}$ be the $i$th row of $A$. We write $\r_{i}=[r_{i1},r_{i2},\ldots,r_{in}]$ and $\u_i=[u_{i1},\ldots,u_{in}]$. Set $\e_{i}'=[0,\ldots,0,1,0,\ldots,0]$, the vector with a $1$ in the $i$th coordinate and $0$'s elsewhere, and $\1:=[1,\ldots,1]$.
\begin{lemm}\label{lem: equalities about r, u}
Under the notations above, we obtain the following:
\begin{enumerate}
\item$\sum_{j=1}^n r_{ij}\u_j=\frac{1}{2}\sum_{j=1}^n\u_j+\e'_{{i}}$;
\item$\sum_{j=1}^n r_{ij}b_j=\frac{1}{2}(1+\sum_{j=1}^nb_j)$; and
\item$\sum_{j=1}^n u_{ij}\r_j-b_i\1=\e'_i$,
\end{enumerate} where $b_i$ is the $i$th entry of $\b$, that is, $b_i=\frac{1}{2}(1+a_i)$ for $1\leq i\leq n$.
\end{lemm}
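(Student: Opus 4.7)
The plan is to repackage all three identities as row readings of the matrix equations $RA = E_n + \frac{1}{2}JA$ and $AR = E_n + \frac{1}{2}AJ$, both of which are immediate from the definition $R = A^{-1} + \frac{1}{2}J$. The useful observation that makes everything fall out cleanly is that multiplying by the all-ones matrix $J$ collapses things: each row of $JA$ equals the row vector $\sum_j \u_j$, because $(JA)_{ij}=\sum_k a_{kj}$ is the $j$-th coordinate of that sum; symmetrically, each row of $AJ$ is the constant row whose common entry is the row sum of the corresponding row of $A$.

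With this in hand, identity (1) becomes the statement that $\sum_j r_{ij}\u_j$, read as the $i$-th row of $RA$, equals $\e'_i + \frac{1}{2}\sum_j \u_j$, which is precisely $\e'_i$ from $E_n$ plus the $i$-th row of $\frac{1}{2}JA$. For (3) I would identify $\sum_j u_{ij}\r_j$ with the $i$-th row of $AR$ and then use the explicit shape of $A$ in Type~2: the $i$-th row has only $u_{ii}=1$ and $u_{i,i-1}=a_i$ (indices interpreted cyclically so that $u_{1,n}=a_1$) nonzero, so its row sum is $1+a_i = 2b_i$; hence the $\frac{1}{2}AJ$ contribution to the $i$-th row of $AR$ is exactly $b_i\1$, producing (3).

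For (2) I would rewrite the column vector $\b = \frac{1}{2}\sum_{j=1}^n \a_j$ recorded in Type~2 of Proposition~\ref{prop: classification of A and b} as $\b = \frac{1}{2}A\1^T$, which immediately gives $A^{-1}\b = \frac{1}{2}\1^T$. Combined with $J\b = \bigl(\sum_j b_j\bigr)\1^T$, this yields $R\b = \frac{1}{2}\bigl(1 + \sum_j b_j\bigr)\1^T$, whose $i$-th entry is exactly the right-hand side of (2). I do not anticipate any serious obstacle: the entire lemma is linear-algebraic bookkeeping, and the only substantive input beyond the definition of $R$ is the explicit Type~2 shape of $A$ and $\b$ in Proposition~\ref{prop: classification of A and b}, which supplies both $\b = \frac{1}{2}A\1^T$ and the row sums $1+a_i=2b_i$ needed above.
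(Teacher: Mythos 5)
Your proposal is correct and follows essentially the same route as the paper: all three identities are read off row-by-row from the matrix equations $RA=\tfrac{1}{2}JA+E_n$, $R\b=A^{-1}\b+\tfrac{1}{2}\bigl(\sum_j b_j\bigr)\1^T$ (using $A\1^T=2\b$), and $AR-\tfrac{1}{2}AJ=E_n$ (using the row sums $1+a_i=2b_i$). Nothing is missing.
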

\begin{proof}
From~\eqref{eq: the matrix R}, we have
\begin{equation}\label{eq:RA}
RA=\frac{1}{2}JA+E_{n}.
\end{equation}
Then we obtain (1) by comparing the rows on both sides of~\eqref{eq:RA}.

Multiplying {both sides of \eqref{eq: the matrix R} by $\b$ from the right}, we have
\begin{align}\label{eq:Rb}
R
\begin{bmatrix}
b_{1}\\
\vdots\\
b_{n}
\end{bmatrix}
=
A^{-1}
\begin{bmatrix}
b_{1}\\
\vdots\\
b_{n}
\end{bmatrix}
+\frac{1}{2}\sum_{j=1}^{n}b_{j}
\begin{bmatrix}
1\\
\vdots\\
1
\end{bmatrix}
\end{align}
Then, {noting $A\1^T=2\b$,} we obtain (2) by comparing the rows on both sides of~\eqref{eq:Rb}.

From~\eqref{eq: the matrix R}, we have
\begin{equation}\label{eq:AR}
AR-\frac{1}{2}AJ=E_{n}.
\end{equation}
Since $\sum_{j=1}^{n}u_{ij}=1+a_{i}=2b_{i}$, we obtain (3) by comparing the rows on both sides of~\eqref{eq:AR}.
\end{proof}

We look at two parts of $\ZP/\ker\lambda_A^S$:
\[
(\ZP/\ker\lambda_{A}^{S})\cap\{|w|<1/\sqrt{2}\}\qquad\text{and}\qquad (\ZP/\ker\lambda_{A}^{S})\cap\{w\neq 0\}.
\]
Note that if $|w|<1/\sqrt{2}$, then $z_{i}\neq0$ for every $i=1,\ldots,n$ by \eqref{eq:ZP}. Write
\begin{align*}
\z^{\c}=\prod_{j=1}^{n}z_{j}^{c_{j}}\text{ and }\h^{c}=\prod_{j=1}^{n}h_{j}^{c_{j}},
\end{align*}
for $\c=[c_{1},\ldots,c_{n}]$.
{It follows from \eqref{eq: kernel lambda A} that}
\begin{align} \label{eq:kerlambdaA}
\ker\lambda_{A}^{S}=\{(\h^{\u_{1}}h^{b_{1}},\ldots,\h^{\u_{n}}h^{b_{n}},h_{1},\ldots,h_{n},h)\in(S^{1})^{2n+1}\mid h_{i},h\in S^{1}\}.
\end{align}
We define
\begin{align*}
\tilde{L}=\left\{(v_{1},\ldots,v_{n},u)\in\C^{{n+1}}~\left|~\sum_{i=1}^{n}|v_{i}|=|u|^{2}+\frac{1}{2},~|u|<\frac{1}{\sqrt{2}}\right\}\right.\text{ and } L=\tilde{L}/S^{1},
\end{align*}
where the action of $S^{1}$ on $\tilde{L}$ is given by
\begin{align} \label{eq: S1-action on tildeL}
g\cdot(v_{1},\ldots,v_{n},u)=(gv_{1},\ldots,gv_{n},g^{-2}u)\text{ for }g\in S^{1}.
\end{align}

\begin{lemm}\label{lem: varphi A induces diffeomorphism}
The map $\varphi_{A}\colon (\C^{\ast})^{n}\times\C^{n+1}\to \tilde{L}$ defined by
\begin{equation}\label{eq: the map induces diffeomorphism}
\varphi_{{A}}(z_{1},\ldots,z_{n},w_{1},\ldots,w_{n},w)
=\Biggl(\biggl(\frac{\z^{\r_{1}}}{|\z^{\r_{1}}|}\biggr)^{-1}w_{1},\ldots,\biggl(\frac{\z^{\r_{n}}}{|\z^{\r_{n}}|}\biggr)^{-1}w_{n},\frac{\z^{\1}}{|\z^{\1}|}w\Biggr)
\end{equation} induces a diffeomorphism $\hat{\varphi}_{A}:(\ZP/\ker\lambda_{A}^{S})\cap\{|w|<1/\sqrt{2}\}\to L$.
\end{lemm}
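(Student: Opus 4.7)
\emph{Proof plan.} First, I would verify that $\varphi_A$ is well defined on the open subset of $\ZP$ cut out by $|w|<1/\sqrt{2}$ and lands in $\tilde{L}$. From $\sum|w_i|^2=|w|^2+\tfrac{1}{2}<1$ one gets $|w_i|<1$ for every $i$, whence $|z_i|^2=1-|w_i|^2>0$; in particular $\z^{\r_i}\neq 0$ and the formula \eqref{eq: the map induces diffeomorphism} makes sense. Since $|v_i|=|w_i|$ and $|u|=|w|$, the defining relation of $\ZP$ rewrites to the defining relation of $\tilde L$, and the map is visibly smooth.

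Next, I would show that $\varphi_A$ is equivariant from the $\ker\lambda_A^S$-action on its domain to the $S^1$-action on $\tilde L$. By \eqref{eq:kerlambdaA}, an element of $\ker\lambda_A^S$ transforms $z_j\mapsto \h^{\u_j}h^{b_j}z_j$, $w_i\mapsto h_i w_i$ and $w\mapsto hw$. A direct computation gives
\[
\z^{\r_i}\;\longmapsto\; \h^{\sum_j r_{ij}\u_j}\,h^{\sum_j r_{ij}b_j}\,\z^{\r_i},
\]
and Lemma~\ref{lem: equalities about r, u}(1),(2) turn these exponents into $\tfrac{1}{2}\sum_j\u_j+\e'_i$ and $\tfrac{1}{2}(1+\sum_j b_j)$ respectively. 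Both vectors are integral: each column sum of $A$ equals $1+a_{k+1}=2b_{k+1}$, and $\sum_j b_j$ is odd precisely because Type 2 requires an odd number of $a_j=1$'s. Setting $g:=\h^{-\frac{1}{2}\sum_j\u_j}h^{-\frac{1}{2}(1+\sum_j b_j)}\in S^1$, one obtains $(\z^{\r_i}/|\z^{\r_i}|)^{-1}\mapsto g\,h_i^{-1}\,(\z^{\r_i}/|\z^{\r_i}|)^{-1}$, and the $h_i^{-1}$ cancels the twist of $w_i$, giving $v_i\mapsto g\,v_i$. The parallel calculation with $\r_i$ replaced by $\1$ yields $u\mapsto g^{-2}u$. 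This matches \eqref{eq: S1-action on tildeL}, so $\varphi_A$ descends to a smooth map $\hat\varphi_A$.

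Finally, I would build an explicit inverse. Because $\det A=2\neq 0$, the homomorphism $(\h,h)\mapsto(\h^{\u_1}h^{b_1},\dots,\h^{\u_n}h^{b_n})$ from $(S^1)^{n+1}$ to $(S^1)^n$ is surjective, so every $\ker\lambda_A^S$-orbit on the domain contains a representative with $z_j\in\R_{>0}$ for all $j$. The residual subgroup of $\ker\lambda_A^S$ fixing all $\arg z_j$ is the one-parameter subgroup $\{(t,\dots,t,t^{-2}):t\in S^1\}$: indeed, the integer kernel of $[A\mid\b]\colon\Z^{n+1}\to\Z^n$ is spanned by $(1,\dots,1,-2)^T$, since $A\cdot\1^T=2\b$ by the Type 2 formula for $\b$. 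This residual $S^1$ acts on $(w_1,\dots,w_n,w)$ exactly as in \eqref{eq: S1-action on tildeL}, so the candidate inverse
\[
[(v_1,\dots,v_n,u)]\;\longmapsto\;\bigl[\bigl(\sqrt{1-|v_1|^2},\dots,\sqrt{1-|v_n|^2},\,v_1,\dots,v_n,\,u\bigr)\bigr]
\]
is well defined (as $\sum|v_i|^2=|u|^2+\tfrac12<1$ forces $|v_i|<1$) and manifestly smooth; checking composition in both directions using the normalization $z_j\in\R_{>0}$ is routine.

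The main obstacle will be the exponent bookkeeping in the equivariance step, in particular verifying that the two half-exponents defining $g$ are integers; this is the only point at which the Type 2 hypothesis (odd number of $a_i=1$) is essential, and without Lemma~\ref{lem: equalities about r, u} the monomials would involve genuinely half-integer powers and the descent to the quotient would fail.
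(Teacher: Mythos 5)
Your proposal is correct, and its first half coincides with the paper's proof: you compute how $\z^{\r_i}$ and $\z^{\1}$ transform under the element \eqref{eq:kerlambdaA} of $\ker\lambda_A^S$, apply Lemma~\ref{lem: equalities about r, u}(1),(2) to extract a single $t=\bigl(\h^{\frac12\sum_j\u_j}h^{\frac12(1+\sum_jb_j)}\bigr)^{-1}\in S^1$ acting as in \eqref{eq: S1-action on tildeL}, and conclude that $\varphi_A$ descends; your side remark that the half-integer exponents are genuinely integral is correct (and is automatic from Lemma~\ref{lem: equalities about r, u}, since the left-hand sides there are manifestly integral), so it is a harmless addition rather than a needed extra step. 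Where you genuinely diverge is injectivity. The paper argues directly: from two points with the same image it builds $g_i=(z_i'/|z_i'|)^{-1}(z_i/|z_i|)$, sets $h_i=t\g^{\r_i}$, $h=t^{-2}\g^{-\1}$, and verifies via Lemma~\ref{lem: equalities about r, u}(3) that the resulting tuple lies in $\ker\lambda_A^S$. You instead pass to the slice $z_j\in\R_{>0}$ (legitimate: the torus homomorphism $(\h,h)\mapsto(\h^{\u_1}h^{b_1},\dots,\h^{\u_n}h^{b_n})$ is surjective because its differential is, as $\det A\neq0$) and identify the residual stabilizer with the diagonal circle $\{(t,\dots,t,t^{-2})\}$ from $A\1^T=2\b$. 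These are two organizations of the same linear algebra --- part (3) of Lemma~\ref{lem: equalities about r, u} is precisely the statement that $R$ inverts $A$ modulo the diagonal, which is what makes both the paper's reconstruction and your slice work --- but your version makes the appearance of the weighted circle action on $\tilde L$ more conceptual. One small point you should make explicit: passing from the integer kernel of $[A\mid\b]$ to the kernel of the associated torus homomorphism uses that this kernel is connected, which holds because $[A\mid\b]\colon\Z^{n+1}\to\Z^n$ is surjective (e.g.\ $\{\a_1,\dots,\a_{n-1},\b\}$ is a $\Z$-basis by nonsingularity of the fan); alternatively, your own equivariance computation already forces $h_1=\dots=h_n=t$ and $h=t^{-2}$ on the residual subgroup, so no appeal to connectedness is needed. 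Surjectivity and smoothness of the inverse via the section $z_i=\sqrt{1-|v_i|^2}$ are identical to the paper.
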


\begin{proof}
We first show that $\varphi_{A}$ induces a {smooth} map
$$\hat{\varphi}_{A}:(\ZP/\ker\lambda_{A}^{S})\cap\{|w|<1/{\sqrt{2}}\}\to L.$$
{By \eqref{eq:kerlambdaA}} it is sufficient to show that {for each $(h_1,\dots,h_n,h)\in (S^1)^{n+1}$,} there exists $t\in S^1$ such that
\begin{equation} \label{eq: map from ZP/ker to L}
\begin{split}
&\varphi_A(\h^{\u_1}h^{b_1}z_1,\dots,\h^{\u_n}h^{b_n}z_n,h_1w_1,\dots,h_nw_n,hw)\\
&= \left(t\Big(\frac{\z^{\r_1}}{|\z^{\r_1}|}\Big)^{-1}w_1,\dots,t\Big(\frac{\z^{\r_n}}{|\z^{\r_n}|}\Big)^{-1}w_n,t^{-2}\frac{\z^{\mathbf 1}}{|\z^{\mathbf 1}|}w\right).
\end{split}
\end{equation}
Since we have $|h_{i}|=|h|=1$ for every $i=1,\ldots,n$, the $i$th coordinate of the {left hand side of \eqref{eq: map from ZP/ker to L}} is equal to
\begin{align*}
\left(\prod_{j=1}^n (\h^{\u_j}h^{b_j}z_j/|z_j|)^{r_{ij}}\right)^{-1}h_iw_i
&=\left(\h^{\sum_{j=1}^nr_{ij}\u_j} h^{\sum_{j=1}^nr_{ij}b_j}\frac{\z^{\r_i}}{|\z^{\r_i}|}\right)^{-1}h_iw_i\\
&=\left(\h^{\frac{1}{2}\sum_{j=1}^n\u_j}h_ih^{\frac{1}{2}(1+\sum_{j=1}^nb_j)}\frac{\z^{\r_i}}{|\z^{\r_i}|}\right)^{-1}h_iw_i\\
&=\left(\h^{\frac{1}{2}\sum_{j=1}^n\u_j}h^{\frac{1}{2}(1+\sum_{j=1}^nb_j)}\right)^{-1}\left(\frac{\z^{\r_i}}{|\z^{\r_i}|}\right)^{-1}w_i,
\end{align*}
where the second equality follows from (1) and (2) in Lemma~\ref{lem: equalities about r, u}. The {$(n+1)$}th coordinate of the {left hand side of \eqref{eq: map from ZP/ker to L}} is equal to
\begin{align*}
&\left(\prod_{j=1}^n (\h^{\u_j}h^{b_j}z_j/|z_j|)\right)hw=\left(\h^{\sum_{j=1}^n\u_j}h^{1+\sum_{j=1}^nb_j}\right)\frac{\z^{\mathbf 1}}{|\z^{\mathbf 1}|}w.
\end{align*}
Hence,~\eqref{eq: map from ZP/ker to L} holds for $t=\left(\h^{\frac{1}{2}\sum_{j=1}^n\u_j}{h}^{\frac{1}{2}(1+\sum_{j=1}^nb_j)}\right)^{-1}\in S^{1}$.

Let us show the injectivity of $\hat{\varphi}_{A}$. Suppose that
$$
\varphi_{A}(z_{1},\ldots,z_{n},w_{1},\ldots,w_{n},w)=\varphi_{A}(z_{1}',\ldots,z_{n}',w_{1}',\ldots,w_{n}',w')\quad\text{in $L$},
$$
that is, there is an element $t\in S^1$ such that
\begin{align}\label{eq: equality of equivalence class of L}
\begin{split}
&\left(\biggl(\frac{\z'^{\r_{1}}}{|\z'^{\r_{1}}|}\biggr)w_{1}',\ldots,\biggl(\frac{\z'^{\r_{n}}}{|\z'^{\r_{n}}|}\biggr)w_{n}',\biggl(\frac{\z'^{\1}}{|\z'^{\1}|}\biggr)w'\right)\\
=
&\left(t\biggl(\frac{\z^{\r_{1}}}{|\z^{\r_{1}}|}\biggr)w_{1},\ldots,t\biggl(\frac{\z^{\r_{n}}}{|\z^{\r_{n}}|}\biggr)w_{n},t^{-2}\biggl(\frac{\z^{\1}}{|\z^{\1}|}\biggr)w\right).
\end{split}
\end{align}
We set $g_{i}=\Bigl( \frac{z_{i}'}{|z_{i}'|}\Bigr)^{{-1}}\Bigl( \frac{z_{i}}{|z_{i}|}\Bigr)$ {and} $
\g^{\c}=\prod_{j=1}^{n}g_{j}^{c_{j}}$ for $\c=[c_{1},\ldots,c_{n}]$. Then
\begin{align*}
w_{i}'=t\g^{\r_{i}}w_{i}\text{ and }w'=t^{-2}\g^{-\1}w
\end{align*}
from~\eqref{eq: equality of equivalence class of L}. Setting $h_{i}=t\g^{\r_{i}}$ and $h=t^{-2}\g^{-\1}$, we obtain
\begin{align*}
\h^{\u_i}h^{b_i}=\big(\prod_{j=1}^nh_j^{u_{ij}}\big)h^{b_i}=\left(t^{\sum_{j=1}^n{u_{ij}}}\g^{\sum_{j=1}^nu_{ij}\r_j}\right)\left(t^{-2b_i}\g^{-b_i\mathbf 1}\right)=g_i,
\end{align*}
where the last equality {above} follows from $\sum_{j=1}^{n}u_{ij}=1+a_{i}=2b_{i}$ and (3) in Lemma~\ref{lem: equalities about r, u}. Therefore, $(g_{1},\ldots,g_{n},h_{1},\ldots,h_{n},h)\in\ker\lambda_{A}^{S}$, which implies that $\hat\varphi_A$ is injective.

For $(v_{1},\ldots,v_{n},u)\in \tilde{L}$, $(z_{1},\ldots,z_{n},v_{1},\ldots,v_{n},u)$ with $z_{i}=\sqrt{1-|v_{i}|^{2}}$ is an element of $\ZP$ and $\varphi_A(z_{1},\ldots,z_{n},v_{1},\ldots,v_{n},u)=(v_{1},\ldots,v_{n},u)$, which proves that $\hat{\varphi}_{A}$ is surjective.

Thus, $\hat{\varphi}$ is smooth and bijective. The inverse of $\hat{\varphi}$ is induced from {the} map sending $(v_{1},\ldots,v_{n},u)\in \tilde{L}$ to $(z_{1},\ldots,z_{n},v_{1},\ldots,v_{n},u)\in\ZP$ with $z_{i}=\sqrt{1-|v_{i}|^{2}}$, so it is also smooth. Therefore, $\hat{\varphi}$ is a diffeomorphism.
\end{proof}

Suppose that the matrix $A$ has at least {two $(-1)$'s} in $\{a_{1},\ldots,a_{n}\}$. We consider the {indices of $a_i$'s modulo $n$ as usual}. Since the number of $1$'s in $\{a_{1},\ldots,a_{n}\}$ is odd, there {exist indices $i<j$} such that
\begin{align}\label{eq: sequence of ai}
a_{i}=-1,~a_{i+1}=\cdots=a_{j-1}=1,~a_{j}=-1,{\text{ and } \text{ $j-i$ is even}}.
\end{align}
Set
\begin{align}\label{eq: relation of between ak' and ak}
a_{k}'=
\begin{cases}
1&\text{for }i\leq k\leq j\\
a_{k}&\text{otherwise}
\end{cases}
\end{align}
and denote by $A'$ the matrix of Type 2 associated with $\{a_{1}',\ldots,a_{n}'\}$. We will use the prime symbol to represent notations corresponding to $A'$.

\begin{lemm}\label{lem: diffeo ZP cap w nonzeo to ZP cap w' nonzero}
The map $f:\C^{2n}\times \C^\ast\to\C^{2n}\times \C^\ast$ defined by
\begin{equation*}
\begin{split}
&f(z_{1},\ldots,z_{n},w_{1},\ldots,w_{n},{w})\\
&=(z_1,\dots,z_{i-1},\frac{w}{|w|}\bar{z}_i,\left(\frac{w}{|w|}\right)^2\bar{z}_{i+1},\dots,\left(\frac{w}{|w|}\right)^2\bar{z}_{j-1},\frac{w}{|w|}z_j,\\
&\qquad\qquad z_{j+1},\dots,z_n,w_1,\dots,w_{i-1},\bar{w}_i,\dots,\bar{w}_{j-1},w_j,\dots,w_n,w)
\end{split}
\end{equation*} induces a diffeomorphism $\hat{f}:(\ZP/\ker\lambda_{A}^{S})\cap\{w\neq0\}\to(\ZP/\ker\lambda_{A'}^{S})\cap\{w\neq0\}$.
\end{lemm}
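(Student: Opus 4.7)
The plan is to check three things about $f$: that it sends $\ZP\cap\{w\neq 0\}$ into itself, that it intertwines the actions of $\ker\lambda_A^S$ and $\ker\lambda_{A'}^S$ via a suitable isomorphism $\Phi$, and that it has a smooth inverse; together these yield the desired diffeomorphism $\hat f$ on the quotients. The first point is immediate: each coordinate transformation defining $f$ is either a complex conjugation or a multiplication by a unit-modulus factor $(w/|w|)^{\pm 1}$ or $(w/|w|)^{\pm 2}$, so every modulus $|z_k|,|w_k|,|w|$ is preserved, which preserves the defining relations \eqref{eq:ZP} of $\ZP$; the $w$-coordinate itself is fixed, so $\{w\neq 0\}$ is preserved, and on this open set $w/|w|$ is smooth.

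For the equivariance, I would parameterize $\ker\lambda_A^S$ and $\ker\lambda_{A'}^S$ by $(h_1,\ldots,h_n,h)\in(S^1)^{n+1}$ via \eqref{eq:kerlambdaA}, and define the group isomorphism $\Phi$ by $h\mapsto h$, together with $h_k\mapsto h_k^{-1}$ for $i\le k\le j-1$ and $h_k\mapsto h_k$ otherwise. Then I would verify coordinate by coordinate that $f(g\cdot x)=\Phi(g)\cdot f(x)$ for $g\in\ker\lambda_A^S$ and $x\in\ZP\cap\{w\neq 0\}$. The inversions appearing in $\Phi$ are forced by the conjugation $w_k\mapsto\bar w_k$ for $i\le k\le j-1$, since this converts an action by $h_k$ into one by $h_k^{-1}$. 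On the $z$-coordinates with $k\in\{i,i+1,\ldots,j\}$, the identity $hw/|hw|=h\cdot w/|w|$ contributes extra powers of $h$ after applying $f$; these match the new $h'^{\,b'_k}$-contributions on the $A'$-side precisely because $A'$ differs from $A$ only by changing $a_i=a_j=-1$ to $a'_i=a'_j=1$, which shifts $b_i,b_j$ from $0$ to $1$, while $b'_k=b_k=1$ for $i<k<j$, so the $h$-powers align in every case.

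Finally, solving the defining formulas for $f$ gives $f^{-1}$ explicitly, of the same moduli-preserving form (conjugations and factors $(w'/|w'|)^{\pm 1}$, $(w'/|w'|)^{\pm 2}$), hence smooth on $\{w'\neq 0\}$. The principal obstacle is the coordinate-by-coordinate equivariance check in the middle step: it splits into five regimes $k<i$, $k=i$, $i<k<j$, $k=j$, $k>j$ (indices taken mod $n$), and in each one must track carefully how the phases produced by complex conjugation combine with the factors $(w/|w|)^{\pm 1}$ and $(w/|w|)^{\pm 2}$, and how the local changes $a_k\leadsto a'_k$ and $b_k\leadsto b'_k$ alter the $A'$-side formulas. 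Once this verification is in hand, the three observations combine to produce the required diffeomorphism $\hat f$.
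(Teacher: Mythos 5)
Your proposal is correct and follows essentially the same route as the paper: one first checks that $f$ preserves all moduli (hence preserves $\ZP$ and $\{w\neq 0\}$) and is a diffeomorphism, and then verifies weak equivariance with respect to the isomorphism $\ker\lambda_A^S\to\ker\lambda_{A'}^S$ that inverts $h_k$ for $i\le k\le j-1$ and fixes the remaining parameters, the extra powers of $h$ produced by $hw/|hw|$ being absorbed exactly by the shift of $b_i,b_j$ from $0$ to $1$. The paper carries out precisely this coordinate-by-coordinate computation, so nothing further is needed.
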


\begin{proof}
As is easily checked, $f$ preserves $\ZP$ (see \eqref{eq:ZP}) and is a diffeomorphism, so it suffices to show that $f$ is weakly equivariant with respect to the actions of $\ker\lambda_A^S$ and $\ker\lambda_{A'}^S$.

Remember the description of $\ker\lambda_A^S$ in \eqref{eq:kerlambdaA}. Since the complex conjugate of an  element in $S^{1}$ is equal to its inverse {and} $|h|=1$ for $h\in S^1$, we have
\begin{align}\label{eq: conjugation is equal to inverse}
\begin{split}
&f(\h^{\u_1}h^{b_1}z_1,\dots,\h^{\u_n}h^{b_n}z_n,h_1w_1,\dots,h_nw_n,hw)\\
&=(\h^{\u_1}h^{b_1}z_1,\dots,\h^{\u_{i-1}}h^{b_{i-1}}z_{i-1},\left(\frac{hw}{|w|}\right){\h}^{-\u_{i}}{h}^{-b_{i}}\bar{z}_i, \\
&\qquad \left(\frac{hw}{|w|}\right)^2{\h}^{-\u_{i+1}}{h}^{-b_{i+1}}\bar{z}_{i+1},\dots,\left(\frac{hw}{|w|}\right)^2{\h}^{-\u_{j-1}}{h}^{-b_{j-1}}\bar{z}_{j-1}, \\&\qquad \left(\frac{hw}{|w|}\right)\h^{\u_{j}}h^{b_{j}}z_j,\h^{\u_{j+1}}h^{b_{j+1}}z_{j+1},\dots,\h^{\u_{n}}h^{b_{n}}z_n,\\
&\qquad h_1w_1,\dots,h_{i-1}w_{i-1},{h}_i^{-1}\bar{w}_i,\dots,{h}_{j-1}^{-1}\bar{w}_{j-1},h_jw_j,\dots,h_nw_n,hw).
\end{split}
\end{align}
Here, since $\u_{k}=[0,\ldots,0,a_{k},1,0\ldots,0]$, it follows from \eqref{eq: sequence of ai} that we have
\begin{equation*}
\begin{split}
&{\h}^{-\u_i}=h_{i-1}^{-a_i}h_i^{-1}=h_{i-1}h_i^{-1},\\
&{\h}^{-\u_k}={h}_{k-1}^{-a_k}{h}_k^{-1}=h_{k-1}^{-1}h_k^{-1}\quad(i+1\le k\le j-1),\\
&\h^{\u_j}=h_{j-1}^{a_j}h_j=h_{j-1}^{-1}h_j,
\end{split}
\end{equation*}
and note that $\h^{\u_{k}}$ does not contain $h_{i},h_{i+1},\ldots,h_{j-1}$ unless $i\le k\le j$.
{On the other hand, it follows from the definition} of $a'_k$'s in~\eqref{eq: relation of between ak' and ak} that we have
\begin{align*}
\begin{split}
&\u_k'=\u_k,~b_k'=b_k \quad\text{for }k\not=i,j,\\
&b_k'=b_k=1\quad\text{for }i+1\le k\le j-1,\\
&b_i'=b_j'=1,\quad b_i=b_j=0.
\end{split}
\end{align*}
Hence, the right hand side of~\eqref{eq: conjugation is equal to inverse} is written as
\begin{align} \label{eq:hprime}
\begin{split}
&(\h^{\u_1'}h^{b_1'}z_1,\dots,\h^{\u_{i-1}'}h^{b_{i-1}'}z_{i-1},\left(\frac{w}{|w|}\right)h_{i-1}h_i^{-1}{h}^{b_{i}'}\bar{z}_i, \\
&\qquad \left(\frac{w}{|w|}\right)^2h_{i}^{-1}h_{i+1}^{-1}{h}^{b_{i+1}'}\bar{z}_{i+1},\dots,\left(\frac{w}{|w|}\right)^2h_{j-2}^{-1}h_{j-1}^{-1}{h}^{b_{j-1}'}\bar{z}_{j-1}, \\&\qquad \left(\frac{w}{|w|}\right){h_{j-1}^{-1}}h_jh^{b_{j}'}z_j,\h^{\u_{j+1}'}h^{b_{j+1}'}z_{j+1},\dots,\h^{\u_{n}'}h^{b_{n}'}z_n,\\
&\qquad h_1w_1,\dots,h_{i-1}w_{i-1},{h}_i^{-1}\bar{w}_i,\dots,{h}_{j-1}^{-1}\bar{w}_{j-1},h_jw_j,\dots,h_nw_n,hw).
\end{split}
\end{align}
{We set} $\h'=(h_1,\ldots,h_{i-1},h_i^{-1},\ldots,h_{j-1}^{-1},h_j,\ldots,h_n,h)$. {Then
\[
\begin{split}
&\h^{\u_k'}=(\h')^{\u_k'} \quad \text{(unless $i\le k\le j$)},\\
&h_{i-1}h_i^{-1}=(\h')^{\u_i'},\quad
h_{k}^{-1}h_{k+1}^{-1}=(\h')^{\u_k'} \ \ (i+1\leq k\leq j-1),\quad
h_{j-1}^{-1}h_j=(\h')^{\u_j'}.
\end{split}
\]
This together with \eqref{eq: conjugation is equal to inverse} and \eqref{eq:hprime} shows that $f$ is $\theta$-equivariant with respect to the actions of $\ker\lambda_A^S$ and $\ker\lambda_{A'}^S$, where $\theta$ is the automorphism of $(S^1)^{2n+1}$ which maps $\h$ to $\h'$, proving the lemma.}
\end{proof}

Now we are ready to prove Proposition~\ref{prop: diffeomorphism class of Type 2}.

\begin{proof}[Proof of Proposition~\ref{prop: diffeomorphism class of Type 2}]
Let $\hat{\varphi}_A$ and $\hat{f}$ be the diffeomorphisms in Lemmas~\ref{lem: varphi A induces diffeomorphism} and~\ref{lem: diffeo ZP cap w nonzeo to ZP cap w' nonzero}, {respectively}. We consider the composition
\[
\hat{\varphi}_{A'}\circ\hat{f}\circ\hat{\varphi}_{A}^{-1}:L\cap\{u\neq0\}\to L\cap\{u\neq0\}.
\]
Setting $s_{i}=\sqrt{1-|v_{i}|^{2}}$, we can see that
\begin{align}\label{eq: self-map of L cap u nozero}
\begin{split}
&(\hat{\varphi}_{A'}\circ \hat{f}\circ \hat{\varphi}_A^{-1})([v_1,\dots,v_n,u])\\
&=(\hat{\varphi}_{A'}\circ \hat{f})([s_1,\dots,s_n,v_1,\dots,v_n,u])\\
&=\hat{\varphi}_{A'}\Big(\big[s_1,\dots,s_{i-1},s_i\frac{u}{|u|},s_{i+1}\Big(\frac{u}{|u|}\Big)^2,\dots,s_{j-1}\Big(\frac{u}{|u|}\Big)^2,s_j\frac{u}{|u|},s_{j+1}, \dots,s_n,\\
&\qquad v_1,\dots,v_{i-1},\bar{v}_i,\dots,\bar{v}_{j-1},v_j,\dots,v_n,u\big]\Big)\\
&=[f_1v_1,\dots,f_{i-1}v_{i-1},f_i\bar{v}_i,\dots,f_{j-1}\bar{v}_{j-1},f_jv_j,\dots,f_nv_n,f_0u],
\end{split}
\end{align}
where each $f_{i}$ is a Laurent monomial of $u/|u|$ with coefficient $1$ by \eqref{eq: the map induces diffeomorphism}. Since the coordinates in~\eqref{eq: self-map of L cap u nozero} are homogeneous {with respect to the $S^1$-action on $\tilde{L}$ defined in \eqref{eq: S1-action on tildeL} and $f_0\in S^1$}, we may assume $f_0=1$. Moreover, the weights of $u$ and $v_{i}$ {with respect to} the $S^{1}$-action are $-2$ and {$1$} respectively from~\eqref{eq: S1-action on tildeL}, {the last term in \eqref{eq: self-map of L cap u nozero} must be equal to}
\begin{align*}
[v_1,\dots,v_{i-1},\Big(\frac{\bar{u}}{|u|}\Big)\bar{v}_i,\dots,\Big(\frac{\bar{u}}{|u|}\Big)\bar{v}_{j-1},v_j,\dots,v_n,u],
\end{align*}
where $\bar{u}/|u|=(u/|u|)^{-1}$.

Note that {$L\cap\{|u|=r\}$} for $0<r<1/\sqrt{2}$ is diffeomorphic to $\R P^{2n-1}$. In fact, if we write $v_{i}=x_{i}+\sqrt{-1}y_{i}$, then a diffeomorphism is given by
\begin{equation*}
\begin{split}
L\cap\{|u|=r\} &\to \R P^{2n-1}\\
[v_1,\dots,v_n,r]& \mapsto [x_1,\dots,x_n,y_1,\dots,y_n].
\end{split}
\end{equation*}
Then $\hat{\varphi_{A'}}\circ\hat{f}\circ\hat{\varphi_{A}}^{-1}$ restricted to $|u|=r$ is a self-diffeomorphism $\psi$ of $\R P^{2n-1}$ which maps $[x_1,\dots,x_n,y_1,\dots,y_n]$ to
\begin{equation*}
[x_1,\dots,x_n,y_1,\dots,y_{i-1},-y_i,\dots,-y_{j-1},y_j,\dots,y_n]\in \R P^{2n-1}.
\end{equation*}
Note that $\psi$ is independent of the choice of $r$.
Since {$j-i$ is even}, {$\psi$} is isotopic to the identity {map on $\R P^{2n-1}$. Therefore $\hat{\varphi_{A'}}\circ\hat{f}\circ\hat{\varphi_{A}}^{-1}$ restricted to $L\cap\{|u|\geq 1/2\sqrt{2}\}$ extends to a self-diffeomorphism of $L$ as the identity map on a neighborhood of $L\cap \{u=0\}$. This means that $\hat{f}$ restricted to $(\ZP/\ker\lambda_A^S)\cap \{|w|\geq 1/2\sqrt{2}\}$} extends to a diffeomorphism $\ZP/\ker\lambda_{A}^{S}\to \ZP/\ker\lambda_{A'}^{S}$.
\end{proof}

\section{Projectivity}\label{sec:Projectivity}
It is known that every toric manifold of complex dimension less than $3$ is projective. However, in general, there are many non-projective toric manifolds.
Oda's $3$-fold, which is of Type 2 {in our terminology, is known as the simplest non-projective toric manifold}. In this section, we will check the projectivity of {a toric manifold associated with a fan over $\vcd$.} It turns out that {our toric manifold is projective unless it is of Type 2} and only one toric manifold is projective {in Type 2 in each dimension $\ge 3$ while the others in Type 2 are non-projective} (Propositions~\ref{prop:proj1} and~\ref{prop:proj2}).

First we discuss the toric manifolds of Type 2. Recall that the matrix $A$ of Type~2 {is of the form}:
\begin{equation*}\label{eq: A of Type 2}
\begin{bmatrix}
1 & 0 & \dots &0& a_1 \\
a_2 & 1 & \dots &0& 0\\
\vdots& \vdots &\ddots &\vdots & \vdots \\
0 & 0 & \dots & 1 & 0\\
0 & 0 & \dots & a_n & 1
\end{bmatrix},
\end{equation*}
where $a_i=\pm 1$ and the number of $a_i$'s equal to $1$ is odd.

\begin{prop} \label{prop:proj1}
Toric manifolds of Type 2 are non-projective if there are at least three $1$'s in $\{a_{1},\ldots,a_{n}\}$.
\end{prop}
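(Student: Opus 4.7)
My plan is to prove non-projectivity by combining a direct support function argument in the all-ones sub-case with a reduction via orbit closures in the general case. The underlying principle is that a closed torus-invariant subvariety of a projective toric variety is itself projective, so producing a non-projective orbit closure inside $X(A,\b)$ forces $X(A,\b)$ to be non-projective.

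\emph{All-ones case.} Suppose first that $J:=\{i:a_i=1\}=[n]$ (this forces $n$ odd, by the parity constraint in Proposition~\ref{prop: classification of A and b}). Assume for contradiction that a strictly convex support function $\varphi$ exists, normalized so that $\varphi(\v_i)=0$ for $i\in[n]$. For each $k\in[n]$ (indices mod $n$), the maximal cones $\widetilde{\{k+1\}}$ and $\widetilde{\{k,k+1\}}$ share the codimension-one wall obtained by removing $\v_k$ (respectively $\v_{n+k}$). Combining $\v_{n+k}=-\v_k-a_{k+1}\v_{k+1}$ with $\v_{k+1}=-\v_{n+(k+1)}-a_{k+2}\v_{k+2}$ gives the wall relation
\[
\v_k+\v_{n+k}=a_{k+1}\v_{n+(k+1)}+a_{k+1}a_{k+2}\v_{k+2},
\]
so strict convexity yields $\varphi(\v_{n+k})<a_{k+1}\varphi(\v_{n+(k+1)})$. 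When every $a_i=1$ this closes into the cyclic chain $\varphi(\v_{n+1})<\varphi(\v_{n+2})<\cdots<\varphi(\v_{2n})<\varphi(\v_{n+1})$, a contradiction. The $n=3$ instance recovers Oda's classical argument.

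\emph{General case.} When $J\subsetneq[n]$ and $|J|\ge 3$, pick $j_0\in[n]\setminus J$ (so $a_{j_0}=-1$) and set $\rho:=\cone(\v_{n+j_0})$. The orbit closure $V(\rho)$ is a toric manifold of complex dimension $n-1$ whose fan is the star $\mathrm{Star}(\rho)$ in $N/\Z\v_{n+j_0}$. Using $\v_{n+j_0}=-\v_{j_0}-a_{j_0+1}\v_{j_0+1}$ to eliminate $\v_{j_0}$ from the quotient lattice and relabeling $[n]\setminus\{j_0\}$ cyclically, I would verify by direct computation that $\mathrm{Star}(\rho)$ is again a Type~2 fan whose $a$-sequence is $(a_1,\ldots,\widehat{a_{j_0}},\ldots,a_n)$. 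The key compatibility points are that the cyclic neighbor $\v_{n+(j_0-1)}=-\v_{j_0-1}-a_{j_0}\v_{j_0}$ rewrites (using $a_{j_0}=-1$ and $\v_{j_0}\equiv-a_{j_0+1}\v_{j_0+1}$) as $-\v_{j_0-1}-a_{j_0+1}\v_{j_0+1}$, exactly the form required by Proposition~\ref{prop: classification of A and b}, and that $\v_{2n+1}=-\sum_{j\in J}\v_j$ is unaffected because $j_0\notin J$. Since the number of $1$'s is preserved, $V(\rho)$ still has at least three of them. Iterating this deletion removes the elements of $[n]\setminus J$ one at a time and after $n-|J|$ steps lands in the all-ones case of the previous paragraph, yielding the desired non-projectivity.

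The main obstacle will be the bookkeeping in the general case: one has to check that $\mathrm{Star}(\rho)$ lies in the precise normal form of Proposition~\ref{prop: classification of A and b}, including the cyclic wraparound at position $j_0-1$ and the correct descent of the cut-vertex vector to $\b'=\tfrac{1}{2}\sum_i\a'_i$ with the expected parities, and that the iteration terminates in the all-ones case rather than getting stuck. Step~1 is comparatively routine once the wall relation is computed, with the only minor point being the verification that $\widetilde{\{k+1\}}$ and $\widetilde{\{k,k+1\}}$ remain proper maximal cones of $\vcd$, which holds for all $n\ge 3$.
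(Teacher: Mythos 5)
Your proposal is correct, and its reduction step coincides with the paper's: the paper likewise passes to the invariant subvariety obtained by projecting the fan onto $\Z^n/(\a_k)$, observes that this replaces the pair $(a_k,a_{k+1})$ by $-a_ka_{k+1}$, and uses that an invariant subvariety of a projective toric variety is projective. Where you genuinely diverge is at the terminal case. The paper keeps projecting --- including at positions with $a_k=1$, which merges two $1$'s into a $-1$ that must then be removed by a further projection --- until exactly three $1$'s remain, and then quotes the known non-projectivity of Oda's $3$-fold. You instead stop at the all-ones sequence of length $|J|\ge 3$ and prove its non-projectivity directly: your wall relation $\v_k+\v_{n+k}=a_{k+1}\v_{n+(k+1)}+a_{k+1}a_{k+2}\v_{k+2}$ follows from $\v_{n+k}=-\v_k-a_{k+1}\v_{k+1}$ and $\v_{k+1}=-\v_{n+(k+1)}-a_{k+2}\v_{k+2}$, the cones $\cone(\widetilde{\{k+1\}})$ and $\cone(\widetilde{\{k,k+1\}})$ are adjacent maximal cones for all $n\ge 3$, and after the normalization $\varphi(\v_i)=0$ $(i\in[n])$ the resulting cyclic chain of strict inequalities is absurd (whichever sign convention for convexity one adopts, the chain closes up either way). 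The bookkeeping in your general case also checks out: the rewriting of $\v_{n+(j_0-1)}$ via $a_{j_0}=-1$ and $\v_{j_0}\equiv-a_{j_0+1}\v_{j_0+1}$, and the invariance of $\v_{2n+1}=-\sum_{j\in J}\v_j$ for $j_0\notin J$, put $\mathrm{Star}(\rho)$ in the Type 2 normal form with $a$-sequence obtained by deleting $a_{j_0}$, exactly matching the paper's $-a_{j_0}a_{j_0+1}=a_{j_0+1}$. Your route buys a self-contained proof that the all-ones Type 2 manifold is non-projective in every odd dimension (a direct generalization of Oda's classical argument) and a slightly cleaner iteration that only ever deletes $-1$'s; the paper's route is shorter because it delegates the base case to a citation.
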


\begin{proof}
{Since the matrix $A$ is determined by the ordered set $(a_1,\dots,a_n)$, we express $A$ as $(a_1,\dots,a_n)$.}
By projecting the fan $\Delta$ associated with $A=(a_1,\ldots,a_n)$ onto $\Z^n/(\a_k)$, we obtain the fan $\Delta_k$ associated with $A_k=(a_1,\ldots,-a_ka_{k+1},\ldots,a_n)$.\footnote{This is similar to the argument in the proof of Proposition~\ref{prop: classification of A and b}.} Note that the toric manifold $X(\Delta_k)$ is an invariant subvariety of the toric manifold $X(\Delta)$. Furthermore, if $a_k=-1$, then the ordered set $A_k$ has one less $(-1)$'s than the ordered set $A$. We repeat this process until we get the ordered set $A_\ast$ all of whose elements are equal to $1$. Since $A$ has at least three $1$'s from the hypothesis, the size of $A_\ast$ is at least three. If $A_\ast$ has exactly three $1$'s, then the toric manifold associated with $A_\ast$ is Oda's $3$-fold. Since every invariant subvariety of a projective toric variety is also projective, $X(\Delta)$ is non-projective. If $A_\ast$ has at least five $1$'s, then the new ordered set $A'$ obtained from the projection as above has two less $1$'s compared with $A_\ast$. Hence, by repeating projections suitably, we get the ordered set consisting of three $1$'s. Hence, if the ordered set $A$ has at least three $1$'s, then the corresponding toric manifold is non-projective.
\end{proof}

The following proposition {provides a criterion of} whether a {toric manifold} is projective or not.

\begin{prop}\cite[page 70]{fult93}\label{prop: projective criterion}
A {toric manifold} $X$ of complex dimension $n$ is projective
if and only if there is a continuous piecewise linear function $\psi$ on the support $|\Delta_X|=\R^n$ of the fan $\Delta_X$ associated with $X$ that satisfies the following two conditions:
\begin{enumerate}
\item[(C1)] The restriction of $\psi$ to each $n$-dimensional cone $\sigma$, denoted by $\psi_{\sigma}$, is linear.
\item[(C2)] $\psi_{\sigma}$ satisfies $\psi_{\sigma}(\v_i)>\psi(\v_i)$ for $\v_i\not\in\sigma$,
\end{enumerate}
{where $\v_i$'s are the primitive edge vectors in the fan $\Delta_X$.}
\end{prop}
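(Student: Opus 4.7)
The plan is to build a dictionary between $T$-invariant Cartier divisors on $X$ and continuous piecewise linear (PL) functions on $|\Delta_X|=\R^n$ satisfying (C1) with integer values on the primitive generators $\v_i$, and then to show that (C2) is precisely the translation of ampleness. Given a $T$-Cartier divisor $D=\sum a_i D_i$, set $\psi_D(\v_i):=-a_i$ and extend linearly on each maximal cone $\sigma$; condition (C1) says that on each $\sigma$ there is an element $m_\sigma\in M$ with $\psi_D|_\sigma=\langle m_\sigma,\cdot\rangle$, which is exactly the Cartier condition. The local sections $\chi^{m_\sigma}$ patch to the standard $T$-invariant section of $\cO(D)$, so this correspondence is a bijection.

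For the ``only if'' direction, I would use that a projective toric manifold admits an ample $T$-equivariant line bundle: the torus acts on $X$, so any ample bundle can be averaged/linearized to a $T$-equivariant one, giving a $T$-Cartier divisor $D$. It then suffices to show that ampleness of $D$ forces $\psi_D$ to satisfy (C2). This I would do via the toric Nakai criterion: $D$ is ample iff $D\cdot C>0$ for every $T$-invariant curve $C$, and the $T$-invariant curves correspond to the codimension-one cones $\tau=\sigma\cap\sigma'$. A direct computation expresses the intersection number $D\cdot C_\tau$ in terms of the difference $\psi_{\sigma'}(\v)-\psi_\sigma(\v)$ for the ray $\v$ of $\sigma'$ not lying in $\tau$; positivity of all such quantities along adjacent maximal cones is equivalent to the global strict convexity condition~(C2).

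For the ``if'' direction, given $\psi$ satisfying (C1), (C2) (after replacing $\psi$ by $N\psi$ for $N\gg 0$ if necessary to ensure integer values), define $D_\psi$ by the dictionary above, and associate the polytope
\[
P_\psi=\{\u\in M_\R\mid \langle\u,\v\rangle\ge \psi(\v)\text{ for all }\v\in|\Delta_X|\}.
\]
The lattice points of $P_\psi$ give a basis of $H^0(X,\cO(D_\psi))$ consisting of $T$-eigensections. The key point is that condition (C2) implies $P_\psi$ is a full-dimensional lattice polytope whose normal fan is exactly $\Delta_X$: each maximal cone $\sigma$ contributes the vertex $m_\sigma$, and (C2) forces the cone of $P_\psi$ at $m_\sigma$ to be the dual of $\sigma$. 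Hence $|P_\psi\cap M|\ge n+1$ and, for sufficiently large $N$, the linear system $|ND_\psi|$ separates points and tangent vectors, yielding a closed embedding $X\hookrightarrow \CP^{|NP_\psi\cap M|-1}$.

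The main obstacle is establishing the precise combinatorial-to-algebraic translation in the ``if'' direction: showing that strict convexity, which is an a priori local convexity condition across codimension-one walls, forces the global polytope $P_\psi$ to have normal fan equal to $\Delta_X$ (and not a coarsening). This requires checking that for each $\sigma$ the supporting hyperplanes determined by the $m_\sigma$'s on all neighboring maximal cones cut out precisely the dual cone of $\sigma$ at $m_\sigma$, which is a clean but non-trivial consequence of (C2) applied across all codimension-one walls containing~$\sigma$.
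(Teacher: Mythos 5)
The paper offers no proof of this proposition; it is quoted directly from Fulton \cite[page 70]{fult93}, and your sketch is precisely the standard argument given there (the dictionary between $T$-Cartier divisors and piecewise linear functions, strict convexity via intersection with $T$-invariant curves across walls, and the polytope $P_\psi$ whose normal fan recovers $\Delta_X$). Your outline is correct and identifies the genuine technical points (local-to-global convexity on a complete fan, and that (C2) pins down the normal fan of $P_\psi$ rather than a coarsening), so there is nothing to add beyond noting that on a smooth complete toric variety ample already implies very ample, which lets you avoid the passage to $N\psi$ for $N\gg 0$.
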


{We think of $\v_i$'s as column vectors as before.  Since $\Delta_X$ is nonsingular, $Q=[\v_{i}]_{\v_{i}\in\sigma}$ is a unimodular matrix and we denote by $\u_i$ the row vector of $Q^{-1}$ corresponding to $\v_i$, i.e., $\u_{i}\v_{j}=\delta_{ij}$}.
Then $\psi_\sigma$ is of the form
\begin{align}\label{eq: linear function on n-dim cones}
\psi_{\sigma}=\sum_{\v_{i}\in\sigma}\psi(\v_{i})\u_{i}.
\end{align}

\begin{prop} \label{prop:proj2}
{Each toric manifold $X(A,\b)$} is projective except when the toric manifold is of Type 2 such that the number of $a_i$'s equal to $1$ is {more than} one.
\end{prop}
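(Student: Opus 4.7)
The plan is to invoke Proposition~\ref{prop: projective criterion} by exhibiting, in each projective case, a strictly convex piecewise linear support function $\psi$; equivalently, a polytope
\[
P(\gamma) = \{x \in \R^n : \langle \v_i, x \rangle + \gamma_i \geq 0 \text{ for } i = 1, \ldots, 2n+1\}
\]
whose normal fan coincides with the fan of $X(A,\b)$. For Type~1 ($\det A = 1$), the result is immediate: $X(A,\b)$ is, by definition, the blow-up of the Bott manifold $X(A)$ at a torus-fixed point, and since Bott manifolds are projective (as iterated $\CP^1$-bundles) and the blow-up of a smooth projective variety at a point is again projective, Type~1 is handled.

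For Types~0 and~3, and for Type~2 with exactly one $a_i=1$, I would construct $\gamma$ explicitly. The natural ansatz is $\gamma_i=1$ for $i=1,\dots,n$, together with carefully chosen positive integers $\gamma_{n+1},\dots,\gamma_{2n}$ and a single real parameter $\gamma_{2n+1}=T$. The recursive structure of $\v_{n+j}$ (always expressible as $\e_{j+1}-\e_j$ or a close variant) permits one to compute each type~(B) vertex $x_{\sigma_I}$ of $P(\gamma)$ as a telescoping expression in the $\gamma_{n+k}$'s, and each type~(A) vertex $x_{\sigma_{i_0}^A}$ as an affine function of $T$. With suitable choices of the $\gamma_{n+j}$'s and $T$, the $2^n+n-1$ candidate vertices are all distinct and condition~(C2) holds strictly at each.

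The main obstacle is the non-emptiness of the feasibility interval for $T$ in Type~2 with exactly one $a_i=1$, where this interval is bounded on both sides. Here the ``cut vertex'' (corresponding to the removed singular cone $\sigma_{[n]}=\cone(\v_{n+1},\dots,\v_{2n})$ of the pre-fan over $\cB_n$) must have its $-\v_{2n+1}$-coordinate strictly greater than that of every lattice type~(B) vertex of $P(\gamma)$; this gap can be controlled by adjusting $\gamma_{2n}$ upward --- for instance, $\gamma_{2n}=n-1$ with $\gamma_{n+j}=1$ for $j<n$ suffices, since then the cut vertex has first coordinate $(\gamma_{n+1}+\cdots+\gamma_{2n})/2$ while its nearest type~(B) competitor has first coordinate $\gamma_{n+1}+\cdots+\gamma_{2n-1}-1$, yielding a positive gap. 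For Types~0 and~3 the interval is instead unbounded above, so the uniform choice $\gamma_{n+j}=1$ works with $T$ taken sufficiently large. The parallel computation for Type~2 with three or more $1$'s shows the analogous interval collapses for every permissible $\gamma$, consistent with Proposition~\ref{prop:proj1}. The remaining verification of condition~(C2) at the type~(A) vertices is routine linear algebra once the ansatz is in place.
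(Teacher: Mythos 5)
Your strategy is the polytope-side dual of the paper's: instead of writing down a strictly convex support function $\psi$ as in Proposition~\ref{prop: projective criterion}, you try to realize the fan as the normal fan of $P(\gamma)=\{x:\langle\v_i,x\rangle+\gamma_i\ge 0\}$. These are equivalent formulations, and your Type~1 argument (blow-up of a projective Bott manifold) is exactly the paper's. The problem is that your concrete choice of $\gamma$ fails in Type~3, and the claims made to justify it are false. You assert that for Type~3 ``the interval is unbounded above, so the uniform choice $\gamma_{n+j}=1$ works with $T$ taken sufficiently large.'' First, when $a\ge 2$ the polyhedron $Q(\gamma)$ cut out by the first $2n$ inequalities is bounded (its recession cone $\{x_i\le 0,\ x_1\ge\cdots\ge x_n\ge -ax_1\}$ is trivial), so for large $T$ the inequality $\langle\b,x\rangle+T\ge0$ is redundant, $P(\gamma)$ has only $2n$ facets, and the feasible interval for $T$ is bounded above exactly as in Type~2. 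Second, and independently of $T$, the uniform choice does not even give $Q(\gamma)$ the required combinatorics: take $n=3$, $a=-5$ (so $\det A=-4$). The vertex of $P(\gamma)$ that should correspond to the maximal cone $\cone(-\e_1,-\e_2,\a_3)$ solves $x_1=x_2=1$, $-5x_1+x_3=-1$, hence $x_3=4>1=\gamma_3$, violating the inactive constraint $x_3\le\gamma_3$; so with $\gamma_{n+j}=1$ the facets never meet in the pattern of $\vcd$ and no value of $T$ yields the normal fan $\Delta$. This is precisely where the paper's support function is non-uniform: it sets $\psi(\a_n)=-n|a|$, i.e.\ $\gamma_{2n}=n|a|$ growing with $|a|$, and the entire case analysis is organized around that weight.

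Beyond this error, the parts of the argument that carry the actual content --- that with ``suitable choices'' all $2^n+n-1$ vertices of $P(\gamma)$ exist, are distinct and simple, and that the maximum of $\langle-\b,\cdot\rangle$ over the type~(B) vertices is attained at the one you name --- are asserted rather than proved (``routine linear algebra once the ansatz is in place''). That verification is where essentially all of the work lies: the paper's proof spends several pages inverting the matrices $Q=[\v_i]_{\v_i\in\sigma}$ cone by cone to check condition (C2). Your Type~2 choice $\gamma_{2n}=n-1$, $\gamma_{n+j}=1$ $(j<n)$ does check out in low dimensions, and Type~0 genuinely admits arbitrarily large $T$ (there $Q(\gamma)$ is unbounded with recession ray spanned by $(-1,\dots,-1)$ and simplex cross-section), but as written the proposal is a sketch with a broken Type~3 case rather than a proof.
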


\begin{proof}
It is well-known that every Bott manifold is a projective toric manifold. Since a toric manifold of Type 1 is a blow-up of a Bott manifold, it is also projective. For {the} other types, we will find a {continuous piecewise linear} function $\psi$ {on $\R^n$} satisfying (C1) and (C2) in Proposition~\ref{prop: projective criterion}.

{Remember} that the minimal non-faces of $\vcd$ are
\begin{align*}
\{i,n+1\},~\{i,2n+1\}\text{ for }1\leq i\leq n\text{ and } \{n+1,\ldots,2n\}.
\end{align*}
{We may take $\v_1=-\e_1,\dots,\v_n=-\e_n$, so that $\v_{n+1}=\a_1,\ldots,\v_{2n}=\a_n$ and $\v_{2n+1}=\b$.}

We first {deal with} the toric manifolds of Type 0.
\bigskip

\noindent{\bf Type 0.} Recall that
\begin{equation*}
[\a_{1},\dots,\a_{n}]=
\begin{bmatrix}
1 & 0 & \dots &0& -1 \\
-1 & 1 & \dots &0& 0\\
\vdots& \vdots &\ddots &\vdots & \vdots \\
0 & 0 & \dots & 1 & 0\\
0 & 0 & \dots & -1 & 1
\end{bmatrix},\qquad
\b=
\begin{bmatrix}
b_1\\
b_2\\
\vdots\\
\vdots\\
b_n
\end{bmatrix}
~\text{with}~\sum_{i=1}^nb_i=1.
\end{equation*}
Let $b:=\max\{|b_{1}|,\ldots,|b_{n}|\}$, and let $\psi$ be the {piecewise} linear function defined by
\begin{align} \label{eq:psitype0}
\psi(-\e_{i})=\psi(\a_{i})=-1\text{ for }1\leq i\leq n, \text{ and }\psi(\b)=-\frac{n(n-1)}{2}b.
\end{align}
We shall show that {our} $\psi$ satisfies (C2). We divide the proof into two cases according to whether {the} $n$-dimensional cone $\sigma$ contains $\b$ or not.\\

\noindent \underline{Case 1: $\b\not\in\sigma$.} {In this case, $\sigma$ contains some $-\e_i$ as an edge vector.  Through a cyclic permutation on $[n]$, we may assume that $\sigma$ contains $-\e_n$.}
Then the matrix $Q=[\v_i]_{\v_i\in\sigma}$ is equal to $-E_n$ or it is of the form
\begin{equation}\label{eq: the matrix Q of Type 0 b not in sigma}
\begin{split}
Q=[\a_1,&\dots,\a_{r_1},-\e_{r_1+1},\dots,-\e_{r_1+s_1},\dots,\\
&\a_{m+1},\dots,\a_{m+r_q}, -\e_{m+r_q+1},\dots,-\e_{m+r_q+s_q}],
\end{split}
\end{equation}
where $m=\sum_{i=1}^{q-1}(r_{i}+s_{i})$, $m+r_{q}+s_{q}=n$, {$r_1\ge 0$}, $r_{i}\geq1$ {for $i\ge 2$}, and $s_{i}\geq1$ {for $i\ge 1$}.

If $Q=-E_n$, then $Q^{-1}=-E_n$ and $\psi_\sigma=[1,\ldots,1]$ {by \eqref{eq: linear function on n-dim cones} and \eqref{eq:psitype0}}. Hence, $\psi_\sigma(\a_i)=0>\psi(\a_i)$ for $1\le i\le n$ and $\psi_\sigma(\b)=1>\psi(\b)$ {by \eqref{eq:psitype0}}.

If $Q\neq-E_n$, then $Q^{-1}$ is as follows:
\begin{equation} \label{eq: matrix D_rs}
Q^{-1}=\begin{bmatrix}
D_{r_1,s_1} & & \\
& \ddots & \\
& & D_{r_q,s_q}
\end{bmatrix}
\end{equation}
where
\begin{equation*}
D_{r,s} = \begin{array}{c@{\hspace{-.5cm}}l}
\left[ \begin{array}[c]{ccc|ccc}
1& & & & & \\
\vdots&\ddots& & & & \\
1&\dots&1 & & & \\
\hline
-1&\dots&-1 &-1 & & \\
& & & &\ddots & \\
& & & & & -1 \\
\end{array} \right]
&
\begin{array}[c]{@{}l@{\,}l}
\left. \begin{array}{c} \vphantom{0} \\ \vphantom{\vdots}
\\ \vphantom{0} \end{array} \right\} & \text{$r$} \\
\left. \begin{array}{c} \vphantom{0} \\ \vphantom{\vdots}
\\ \vphantom{0} \end{array} \right\} & \text{$s$} \\
\end{array}
\end{array}.
\end{equation*}
Therefore, {it follows from \eqref{eq: linear function on n-dim cones} and \eqref{eq:psitype0} that} $\psi_{\sigma}$ is given by
\begin{equation} \label{eq: psi_sigma_type0}
\begin{split}
\psi_\sigma=[&-1,\dots,-1]Q^{-1}\\
=[&-(r_1-1),-{(r_1-2)},\dots,-1,0,1,\dots,1,\\
&\qquad \dots\dots\dots\dots\dots\dots\dots\dots\\
&-(r_q-1),-{(r_q-2)},\dots,-1,0,1,\dots,1],
\end{split}
\end{equation}
where $0$'s correspond to $\a_{r_1}$, $\a_{r_1+s_1+r_2}$, $\ldots$, $\a_{m+r_q}$ in this order. Then {the vectors $-\e_i$ and $\a_i$ not contained in $\sigma$ satisfy}
\begin{equation}\label{eq: sigma does not contain ei, b}
\begin{split}
&0\le \psi_\sigma(-\e_i)\le r_j-1 \text{ for some }j,\\
&\psi_\sigma(\a_i)=0\text{ or } r_j \text{ for some }j.
\end{split}
\end{equation}
{This together with \eqref{eq:psitype0} shows that (C2) is satisfied for those $-\e_i$ and $\a_i$ not contained in $\sigma$.}
{It also follows from~\eqref{eq:psitype0} and~\eqref{eq: psi_sigma_type0} that}
\begin{align*}
\psi_\sigma(\b)\ge
-\left(\sum_{k=1}^q\frac{r_k(r_k-1)}{2}+\sum_{k=1}^qs_k\right)b
>-\frac{n(n-1)}{2}b=\psi(\b).
\end{align*}
Hence, our $\psi$ satisfies (C1) and (C2).\\

\noindent \underline{Case 2: $\b\in\sigma$.} In this case none of $-\e_i$'s are contained in $\sigma$ and some $\a_i$  is not contained in $\sigma$. Similarly to Case 1, we may assume that $\a_n$ is not contained in $\sigma$. Then
\[
Q=[\a_1,\dots,\a_{n-1},\b]
=\begin{bmatrix}
1 & & & & b_1\\
-1 & 1 & & & b_2\\
& \ddots & \ddots & & \vdots\\
& & -1 & 1 & b_{n-1}\\
& & & -1 & b_n
\end{bmatrix}
\]
{and hence}
\[
Q^{-1}=\begin{bmatrix}
\sum_{k=2}^nb_k& -b_1 &\dots &-b_1 &-b_1\\
\sum_{k=3}^nb_k &\sum_{k=3}^n b_k & \dots & -b_1-b_2 &-b_1-b_2\\
\vdots & \vdots &\ddots &\vdots &\vdots\\
b_n & b_n & \dots &b_n &-\sum_{k=1}^{n-1}b_k\\
1 & 1& \dots & 1 & 1
\end{bmatrix}.
\]
The $i$th component of $\psi_\sigma=[-1,\dots,-1,-\frac{n(n-1)}{2}b]Q^{-1}$ is equal to
\begin{equation} \label{eq:ithcomp}
\sum_{\ell=1}^{i-1}(\sum_{k=1}^{\ell}b_k)-\sum_{\ell=i+1}^n(\sum_{k=\ell}^nb_k)-\frac{n(n-1)}{2}b.
\end{equation}
{The edge vectors not contained in $\sigma$ are $-\e_{1},\ldots,-\e_n$ and $\a_{n}$, and it follows from \eqref{eq:psitype0} and \eqref{eq:ithcomp} that they satisfy}
\[
\begin{split}
\psi_\sigma(-\e_i)&=-\sum_{\ell=1}^{i-1}(\sum_{k=1}^{\ell}b_k)+\sum_{\ell=i+1}^n(\sum_{k=\ell}^nb_k)+\frac{n(n-1)}{2}b>-1=\psi(-\e_i),\\
\psi_\sigma(\a_n)&={\sum_{\ell=2}^n(\sum_{k=\ell}^nb_k)+\sum_{\ell=1}^{n-1}(\sum_{k=1}^\ell b_k)=}(n-1)\sum_{k=1}^nb_k=n-1>-1=\psi(\a_n).
\end{split}
\]
Hence, our $\psi$ satisfies (C1) and (C2).

\bigskip

\noindent{\bf Type 2 having only one $a_i$ equal to $1$ and Type 3.} Recall that, in Type 3,
\begin{equation}\label{eq: matrix of Type 3}
[\a_{1},\ldots,\a_{n}]=
\begin{bmatrix}
1 & 0 & \dots &0& a \\
-1 & 1 & \dots &0& 0\\
\vdots& \vdots &\ddots &\vdots & \vdots \\
0 & 0 & \dots & 1 & 0\\
0 & 0 & \dots & -1 & 1
\end{bmatrix},
\quad
\b=\frac{1}{\det A}\sum_{i=1}^n\a_i
=
\begin{bmatrix}
1\\
0\\
\vdots\\
\vdots\\
0
\end{bmatrix}.
\end{equation}
If $a=1$ in~\eqref{eq: matrix of Type 3}, then $[\a_1,\ldots,\a_n,\b]$ is of Type 2 such that the number of $a_i$'s equal to~$1$ is exactly one. Hence, it remains to prove that the toric manifold associated with $[\a_1,\ldots,\a_n,\b]$ in~\eqref{eq: matrix of Type 3} {is projective when $a\not=0,-1$}.

Let $\psi$ be the {piecewise} linear function defined by
\[
\begin{split}
&\psi(-\e_i)=-1 \ \text{ for $1\le i\le {n}$}, \qquad \psi(\a_i)=-1\ \text{ for }1\le i\le n-1\\
&\psi(\a_n)=-n|a|,\qquad \psi(\b)=-(n-1).
\end{split}
\]
To show that $\psi$ satisfies (C2), we divide the proof into four cases according to whether $\sigma$ contains the edge vectors $\a_n$ and $\b$.\\

\noindent\underline{Case 1: $\b\notin\sigma$ and $\a_n\not\in\sigma$.} In this case, {$-\e_n\in\sigma$ and} the matrix $Q$ is either $-E_n$ or of the form in~\eqref{eq: the matrix Q of Type 0 b not in sigma}. If $Q=-E_n$, then $Q^{-1}=-E_n$ and $\psi_\sigma=[1,\ldots,1]$, and hence
\[
\begin{split}
&\psi_\sigma(\a_i)=0>\psi(\a_i)\quad (1\leq i<n),\\
&\psi_\sigma(\a_n)=a+1>\psi(\a_n), \text{ and}\\
&\psi_\sigma(\b)=1>-(n-1).
\end{split}
\]
Hence, (C2) is satisfied. If $Q\neq -E_n$, then $\psi_\sigma$ is the same as~\eqref{eq: psi_sigma_type0}. Thus (C2) is satisfied for the edge vectors not contained in $\sigma$ except for $\a_{n}$ and $\b$, see~\eqref{eq: sigma does not contain ei, b}. {As for} $\a_{n}$ and $\b$, since $\psi_\sigma(\a_n)=1-a(r_1-1)$ {and} $r_{1}\leq {n-1}$, we get
\[
\psi_\sigma(\a_n)>-(n-2)|a| {\ge} -n|a|=\psi(\a_n),
\]
and
\[
\psi_\sigma(\b)=-(r_1-1)\ge -(n-2)>-(n-1)=\psi(\b).
\]
Hence, our $\psi$ satisfies (C1) and (C2).\\

\noindent\underline{Case 2: $\b\not\in\sigma$ and $\a_{n}\in\sigma$.} In this case, the matrix $Q$ is obtained from the matrix in~\eqref{eq: the matrix Q of Type 0 b not in sigma} by replacing $-\e_{n}$ with $\a_{n}$, that is,
\begin{equation*}
\begin{split}
Q=[\a_1,&\dots,\a_{r_1},-\e_{r_1+1},\dots,-\e_{r_1+s_1},\dots,\\
&\a_{m+1},\dots,\a_{m+r_q}, -\e_{m+r_q+1},\dots,-\e_{m+r_q+s_q{-1}},\a_n].
\end{split}
\end{equation*}
We {consider}  two subcases depending on whether $-\e_{n-1}\in \sigma$ or not.\\

\underline{Subcase 2-1: $-\e_{n-1}{\in}\sigma$.} In this case, $m+r_q<n-1$ and we have
\[
Q^{-1}=\left[\begin{array}{cccc|c}
D_{r_1,s_1} & & & & \\
& \ddots & & &\bar\a\\
& & {D_{r_{q-1},s_{q-1}}}& &\\
& & & D_{r_q,s_q{{-1}}} & \\
\hline
& & & & 1
\end{array}\right],
\]
where $D_{r,s}$ is the matrix~\eqref{eq: matrix D_rs} and $\bar\a=[\underbrace{-a,\dots,-a}_{r_1},a,0,\ldots,0]^T$. Then
\begin{equation*}
\begin{split}
\psi_\sigma=[&-1,\dots,-1,-n|a|]Q^{-1}\\
=[&-(r_1-1),-{(r_1-2)},\dots,-1,0,1\dots,1,\\
&\qquad \dots\dots\dots\dots\dots\dots\dots\dots\\
&{-(r_{q-1}-1),-(r_{q-1}-2),\dots,-1,0,1\dots,1,}\\
&-(r_q-1),-{(r_q-2)},\dots,-1,0, 1,\dots,1, (r_1-1)a-n|a|].
\end{split}
\end{equation*}
Note that $\psi_\sigma$ is the same as~\eqref{eq: psi_sigma_type0} except the last coordinate. Hence, \eqref{eq: sigma does not contain ei, b} shows that $\psi_\sigma$ satisfies (C2) for $-\e_{i}\not\in\sigma$ {except} for $-\e_n$. The function $\psi_\sigma$ also satisfies (C2) for $\a_{i}\not\in\sigma$ except for $\a_{n-1}$. On the other hand, since $r_{1}\leq n-2$, we can see that
\begin{align*}
\begin{split}
&\psi_\sigma(-\e_n)=n|a|-(r_1-1)a>-1=\psi(-\e_n),\\
&\psi_\sigma(\a_{n-1})=1+n|a|-(r_1-1)a>-1=\psi(\a_{n-1}), \text{ and}\\
&\psi_\sigma(\b)=-(r_1-1)>-(n-1)=\psi(\b).
\end{split}
\end{align*}
This proves that $\psi_\sigma$ satisfies (C2).\\

\underline{Subcase 2-2: $-\e_{n-1}\notin \sigma$} In this case, $m+r_q=n-1$ {and} we have
\[
Q^{-1}=\begin{bmatrix}
\,D_{r_1,s_1} & & & \overline{A}\,\\
& \ddots & & \\
& & D_{r_{q-1},s_{q-1}} & \\
& & & D
\end{bmatrix},
\]
where $\overline{A}$ is the $(r_1+1)\times (r_q+1)$ matrix and $D$ is the square matrix of size $(r_q+1)$ such that
\[
\overline{A}=
\begin{bmatrix}
-a&\dots&-a\\
\vdots&\dots&\vdots\\
-a&\dots&-a\\
a&\dots&a\end{bmatrix}
\text{ and }
D=\begin{bmatrix}
1 & & & \\
1& 1& & \\
\vdots &\vdots & \ddots & \\
1& 1&\dots & 1
\end{bmatrix}.
\]
Then
\begin{align*}
\begin{split}
\psi_\sigma=[&-1,\dots,-1,-n|a|]Q^{-1}\\
=[&-(r_1-1),-{(r_1-2)},\dots,-1,0,1\dots,1,\\
&\qquad \dots\dots\dots\dots\dots\dots\dots\dots\\
&-(r_{q-1}-1),-{(r_{q-1}-2)},\dots,-1,0,1\dots,1,\\
& (r_1-1)a-r_q-n|a|,\dots,(r_1-1)a-n|a|].
\end{split}
\end{align*}
Since $\psi_\sigma$ is the same as~{\eqref{eq: psi_sigma_type0}} except for the coordinates from $m+1$ to $n$, it is enough to check (C2) for $-\e_i\not\in\sigma$ $(m+1\leq i\leq n)$, $\a_{m}$, and $\b$. Note that $n=m+r_q+1$. Since $r_{1}\leq n-2$, we can see that
\begin{equation*}
\begin{array}{ll}
\psi_\sigma(-\e_i)=n|a|+(n-i)-(r_1-1)a>-1=\psi(-\e_i)&(m+1\le i\le n),\\
\psi_\sigma(\a_{m})=1+n|a|+r_q-(r_1-1)a> -1=\psi(\a_{m}),&\\
\psi_\sigma(\b)=-(r_1-1)>-(n-1)=\psi(\b).&
\end{array}
\end{equation*}
Hence, our $\psi$ satisfies (C1) and (C2).\\

\noindent\underline{Case 3: $\b\in\sigma$ and $\a_n\not\in\sigma$.} Since $Q=[\a_1,\dots,\a_{n-1},\b]$, we have
\[
Q^{-1}=\begin{bmatrix}
0&-1 &\dots & -1\\
\vdots &\ddots & \ddots& \vdots \\
0&\dots & 0 &-1 \\
1 &\dots & 1 & 1
\end{bmatrix}.
\]
Hence
\[
\psi_\sigma=[-1,\dots,-1,-(n-1)]Q^{-1}=[-(n-1),{-(n-2)},\dots,-1,0].
\]
The vectors $-\e_{i}$ for $1\leq i\leq n$ and $\a_{n}$ {are not contained in} $\sigma$, and we can see that
\[
\begin{split}
&\psi_\sigma(-\e_i)=n-i>-1=\psi(-\e_i),\\
&\psi_\sigma(\a_n)=-(n-1)a>-n|a|=\psi(\a_n),
\end{split}
\]
since $a\neq0$. Thus, $\psi_\sigma$ satisfies (C2).\\

\noindent\underline{Case 4: $\b\in\sigma$ and $\a_{n}\in\sigma$.} Since the matrix $Q=
[\a_1,\dots,\a_{k-1},\b,\a_{k+1},\dots,\a_n]$ for some $1\le k\le n-1$,
we obtain
\[
Q^{-1}=\begin{bmatrix}
0 & -1 &\dots&-1 & & & \\
\vdots & \ddots &\ddots &\vdots & & & \\
0&\dots & 0 & -1 & & & \\
1 & \dots & 1 & 1&-a&\dots &-a\\
& & & & 1 & & \\
& & & & \vdots &\ddots & \\
& & & & 1&\dots&1
\end{bmatrix},
\]
where the number of $0$'s on the main diagonal is $k-1$.
Hence
\[
\begin{split}
\psi_\sigma=&[-1,\dots,-1,-(n-1),-1,\dots,-1,-n|a|]Q^{-1}\\
=&[-(n-1),\dots,-(n-1)+k-2,-(n-1)+k-1, \\
&\quad (n-1)a-(n-k-1)-n|a|,\dots,(n-1)a-n|a|].
\end{split}
\]
The edge vectors not contained in $\sigma$ are $\a_k$ and $-\e_i$ {for $i=1\dots,n$, and} we can see that
\[
\begin{split}
&\psi_\sigma(-\e_i)
=
\begin{cases}n-i >-1=\psi(-\e_i) &(1\le i\le k),\\
-(n-1)a+(n-i)+n|a|>-1=\psi(-\e_i)&(k+1\le i\le n),
\end{cases}\\
&\psi_\sigma(\a_k)=-(n-1)a-1+n|a|>-1=\psi(\a_k),
\end{split}
\]
since $a\neq 0$. Therefore, $\psi_\sigma$ satisfies (C2).

This completes the proof of the proposition.
\end{proof}

\appendix
\setcounter{section}{-1}
\section{}\label{sec:appendix}
\renewcommand{\thesection}{A}
{In this section, we provide the proof of Proposition \ref{prop: keyproposition} and give some remarks.} Note that, in this section, the term fan does not necessarily mean a rational fan.

First, we regard a moment-angle manifold as a submanifold of $\C^m$ in the following way. Suppose that a simple $n$-polytope $P$ with $m$ facets is realized in $\R^n$ by
$$ P=\{{\x}\in\R^n\ |\ \langle{\n}_i,\x\rangle+ \gamma_i\ge 0\quad(1\le i\le m)\} $$
where ${\n}_i\in \R^n$, $\gamma_i{\in \R}$ and $\langle\ ,\ \rangle$ denotes the standard scalar product on $\R^n$. Then we identify $\ZP$ with the image of
$$ \{(\x,\z)\in P\times\C^m\ |\ |z_i|^2=\langle{\n}_i,\x\rangle+\gamma_i\quad(1\le i\le m)\},\quad \z=(z_1,\ldots,z_m), $$
under the projection to the second component. Note that this set does not depend on whether $\x$ runs over $P$ or $\R^n$. Since we can choose $n$ vectors out of ${\n}_1,\ldots,{\n}_m$ so that they are linearly independent, we can delete $\x$ from the above $m$ equations to obtain $f_i\colon\C^m\to\R$ ($i=1,\ldots,m-n$) such that $\ZP{=\bigcap_{i=1}^{m-n} f_i^{-1}(0)}$.

Let $\cK$ be the simplicial complex on $[m]$ whose geometric realization is $\partial P^*$, and put
$$ U(\cK)=\C^m\setminus\bigcup_{I\not\in \cK}\{\z\in\C^m\ |\ z_i=0\ (i\in I)\}. $$
If we denote by $\lambda^\R\colon (\R_{>0})^m\to(\R_{>0})^n$ the homomorphism {sending $(y_1,\ldots,y_m)$ to $y_1^{\n_1}\cdots y_m^{\n_m}$, where $y^{\u}=(y^{u_1},\dots, y^{u_n})\in (\R_{>0})^n$ for $y\in \R_{>0}$ and $\u=(u_1,\dots,u_n)\in \R^n$}, then we have the following.

\begin{prop}[cf. {\cite[Theorem 3.3]{pa-us12}}]\label{prop: app_1}
The group $\ker\lambda^{\R}$ acts on $U(\cK)$ freely and properly. Moreover, the inclusion $\ZP\to \C^m$ induces an $(S^1)^m$-equivariant diffeomorphism $\ZP\to U(\cK)/\ker\lambda^{\R}$.
\end{prop}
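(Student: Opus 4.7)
The plan is to verify three claims in turn: freeness of the $\ker\lambda^{\R}$-action on $U(\cK)$, the orbit bijection $\ZP \leftrightarrow U(\cK)/\ker\lambda^{\R}$, and smoothness together with properness.

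For freeness, suppose $y=(y_1,\ldots,y_m)\in\ker\lambda^{\R}$ fixes $\z\in U(\cK)$, so $y_i z_i=z_i$ for each $i$. Then $y_i=1$ whenever $z_i\neq 0$, and for $I=\{i:z_i=0\}$ we have $I\in\cK$ by the definition of $U(\cK)$. Since $P$ is simple, the normals $\{\n_i\}_{i\in I}$ are linearly independent, and taking logarithms of the defining condition for $\ker\lambda^{\R}$ gives $\sum_{i=1}^m(\log y_i)\n_i=0$; this reduces to $\sum_{i\in I}(\log y_i)\n_i=0$, hence $y=\1$.

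For the orbit bijection, first note $\ZP\subset U(\cK)$: if $\z\in\ZP$ corresponds to $\x\in P$, then $\{i:z_i=0\}=\{i:\x\in F_i\}$ indexes a non-empty face of $P$ and so lies in $\cK$. The key step is that for each $\z\in U(\cK)$ with zero set $I\in\cK$ there is a unique $y\in\ker\lambda^{\R}$ with $y\cdot\z\in\ZP$. Writing $r_i=|z_i|^2$ for $i\notin I$ and $F_I=\bigcap_{i\in I}F_i$, this reduces to finding a unique $\x\in\mathrm{relint}(F_I)$ such that $\sum_{i\notin I}\log((\langle\n_i,\x\rangle+\gamma_i)/r_i)\,\n_i$ lies in $\mathrm{span}\{\n_i:i\in I\}$; then $y_i=\sqrt{(\langle\n_i,\x\rangle+\gamma_i)/r_i}$ is forced for $i\notin I$, and the remaining $y_i$ ($i\in I$) are uniquely determined by $\sum_{i\in I}(\log y_i)\n_i=-\sum_{i\notin I}(\log y_i)\n_i$ via the linear independence of $\{\n_i\}_{i\in I}$. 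To produce this $\x$ I would introduce the function
\[
\Phi(\x)=\sum_{i\notin I}\Bigl[(\langle\n_i,\x\rangle+\gamma_i)\log\frac{\langle\n_i,\x\rangle+\gamma_i}{r_i}-(\langle\n_i,\x\rangle+\gamma_i)\Bigr]
\]
on $\mathrm{relint}(F_I)$; its Hessian on the tangent space of $F_I$ equals $\sum_{i\notin I}\langle\n_i,v\rangle^2/(\langle\n_i,\x\rangle+\gamma_i)$, which is positive definite because $\{\n_1,\ldots,\n_m\}$ spans $\R^n$. As $\x$ tends to a boundary point of $F_I$ where $\langle\n_j,\x\rangle+\gamma_j=0$ for some $j\notin I$, the inward directional derivative of $\Phi$ tends to $-\infty$, so $\Phi$ attains its unique minimum at some $\x_0\in\mathrm{relint}(F_I)$, and the critical-point condition at $\x_0$ is exactly the required linear condition.

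Finally, $(S^1)^m$-equivariance is automatic, since the construction of $\x_0$ and $y$ depends only on the $(S^1)^m$-invariant data $(|z_i|^2)_i$. Smoothness of the resulting inverse $U(\cK)\to\ZP$ follows from the implicit function theorem applied to the gradient equation $\nabla\Phi(\x_0)=0$ (its derivative being the invertible positive-definite Hessian), combined with the explicit formulas for $y$. Properness of the $\ker\lambda^{\R}$-action then follows from the induced $\ker\lambda^{\R}$-equivariant diffeomorphism $U(\cK)\cong\ZP\times\ker\lambda^{\R}$, where $\ker\lambda^{\R}$ acts by translation on the second factor. The main technical obstacle is checking that the inward blow-up of the directional derivative of $\Phi$ holds uniformly across faces $F_I$ of all dimensions so that the unique minimum genuinely lies in $\mathrm{relint}(F_I)$ rather than slipping off to a smaller face.
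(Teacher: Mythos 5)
The paper itself does not prove this proposition: it is quoted from Panov--Ustinovsky with a ``cf.''\ citation, so your attempt can only be measured against the standard argument, which it essentially reproduces in a Legendre-dual form. Your freeness argument is correct, and your orbit-bijection argument via the strictly convex functional $\Phi$ on $\mathrm{relint}(F_I)$ is sound; the worry you raise at the end about the minimum ``slipping off'' to a smaller face is not a real obstacle (if the minimum over the compact set $F_I$ sat at a point $\x^*$ with $\langle\n_j,\x^*\rangle+\gamma_j=0$ for some $j\notin I$, then along the segment from $\x^*$ to any interior point the derivative of $\Phi$ tends to $-\infty$, contradicting minimality).

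The genuine gap is in the smoothness step, and it propagates into properness. Your function $\Phi$ and its domain $\mathrm{relint}(F_I)$ both depend on the stratum $I=\{i:z_i=0\}$, and $\Phi$ blows up as some $r_i\to 0$ (the term $t\log(t/r_i)$ diverges), so the implicit function theorem applied to $\nabla\Phi(\x_0)=0$ only yields smoothness of $\x_0(\z)$ and $y(\z)$ \emph{within} each stratum; you never show that the section is smooth (or even continuous) across a stratum boundary, which is exactly what is needed for the product decomposition $U(\cK)\cong\ZP\times\ker\lambda^{\R}$ from which you then deduce properness. The standard repair is to dualize: parametrize the orbit of $\z$ by the fixed vector space $W=\ker(\R^m\to\R^n,\ e_i\mapsto\n_i)$ and minimize $F(t)=\tfrac12\sum_i e^{2t_i}|z_i|^2-\langle\gamma,t\rangle$ over $t\in W$, where $\gamma=(\gamma_1,\dots,\gamma_m)$. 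This $F$ is jointly smooth in $(t,\z)$ on $W\times\C^m$, strictly convex in $t$ for $\z\in U(\cK)$ (a direction $w\in W$ supported on $I\in\cK$ would give a linear relation among the independent $\n_i$, $i\in I$), and proper because $\langle\gamma,w\rangle<0$ for every nonzero $w\in W$ with $w_i\le 0$ for all $i\notin I$ (pair $w$ with $\iota_P(\x)$ for $\x\in\mathrm{relint}(F_I)$). Its unique critical point is exactly your $y(\z)$, and since the domain $W$ no longer varies with $\z$, the implicit function theorem now gives global smoothness on $U(\cK)$. Alternatively, you can sidestep smoothness of the retraction entirely by establishing properness directly and checking transversality $T_{\z}\ZP\cap T_{\z}(\ker\lambda^{\R}\cdot\z)=0$, so that the smooth bijection $\ZP\to U(\cK)/\ker\lambda^{\R}$ is a local, hence global, diffeomorphism.
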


More generally, for any simplicial fan $(\cK,\{{\n}_i\}_{i=1}^m)$, $\ker\lambda^{\R}$ acts on $U(\cK)$ freely and properly. The following lemma is due to Hiroaki Ishida.

\begin{lemm}\label{lemm: app_2}
Let ${\n}_i(t)~(i=1,\ldots,m)$ be a smooth function $[0,1]\to\R^n$ and suppose that $(\cK,\{{\n}_i(t)\}_{i=1}^m)$ is a simplicial fan for each t in $[0,1]$. Then $U(\cK)/\ker\lambda^{\R}(0)$ is $(S^1)^{{m}}$-equivariantly diffeomorphic to $U(\cK)/\ker\lambda^{\R}(1)$.
\end{lemm}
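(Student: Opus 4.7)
The plan is to organize $\{U(\cK)/\ker\lambda^{\R}(t)\}_{t\in[0,1]}$ into the fibers of a smooth fiber bundle over the contractible base $[0,1]$ and trivialize. Write $H_t:=\ker\lambda^{\R}(t)$. Under the identification $(\R_{>0})^m\cong\R^m$ via the exponential map, $H_t$ corresponds to the linear subspace $\mathfrak{h}_t:=\ker N(t)$, where $N(t)\colon\R^m\to\R^n$ is the linear map sending $e_i$ to $\n_i(t)$. The hypothesis that $(\cK,\{\n_i(t)\}_{i=1}^m)$ is a simplicial fan forces $N(t)$ to be surjective for every $t$: any maximal cone of $\cK$ supplies $n$ linearly independent $\n_i(t)$. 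Consequently $\{\mathfrak{h}_t\}$ is a smoothly varying rank-$(m-n)$ subbundle of the trivial bundle $\R^m\times[0,1]$, and $\mathcal{H}:=\bigsqcup_t H_t$ is a smooth Lie subgroup bundle of $(\R_{>0})^m\times[0,1]$.

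Form the total space $E:=(U(\cK)\times[0,1])/{\sim}$, where $(\z,t)\sim(h\cdot\z,t)$ for $h\in H_t$. Since each $H_t$ acts freely and properly on $U(\cK)$ by Proposition~\ref{prop: app_1} and the family $\mathcal{H}$ is smooth, $E$ is a smooth manifold and the projection $\pi\colon E\to[0,1]$ is a smooth submersion whose fiber over $t$ is $U(\cK)/H_t$. The standard $(S^1)^m$-action on $U(\cK)$ commutes with every $H_t$ and descends to a fiber-preserving $(S^1)^m$-action on $E$. It therefore suffices to prove that $\pi$ is a locally trivial smooth fiber bundle; contractibility of $[0,1]$ together with $(S^1)^m$-equivariance of the local trivializations will then deliver the desired diffeomorphism $E_0\cong E_1$.

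Local triviality of $\pi$ near each $t_0\in[0,1]$ will be established by constructing a smooth vector field $\xi$ on $E|_{(t_0-\epsilon,t_0+\epsilon)}$ with $\pi_\ast\xi=\partial_t$ whose flow is defined on the whole interval. The vector field will be manufactured from the infinitesimal data of a smooth path of linear isomorphisms $B(t)\in\GL(m,\R)$ with $B(t_0)=\mathrm{id}$ and $B(t)\mathfrak{h}_{t_0}=\mathfrak{h}_t$. To produce such $B(t)$, choose a smooth right-inverse $P(t)\colon\R^n\to\R^m$ of $N(t)$ (which exists since $N(t)$ is surjective of constant rank) and let $B(t)$ be the solution of the linear ODE $B'(t)=-P(t)N'(t)B(t)$ with $B(t_0)=\mathrm{id}$; differentiating $N(t)B(t)$ along the flow shows $N(t)B(t)\equiv N(t_0)$, from which $B(t)\mathfrak{h}_{t_0}=\mathfrak{h}_t$ follows by dimension. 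The smooth family $B(t)$ determines a principal connection on the $\mathcal{H}$-bundle $U(\cK)\times[0,1]\to E$, and $\xi$ is taken to be its horizontal lift of $\partial_t$.

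The main anticipated obstacle is completeness of the flow of $\xi$: the fibers $U(\cK)/H_t$ are typically non-compact, so Ehresmann's fibration theorem does not apply directly. This will be handled by lifting the horizontal distribution on $E$ to a smooth vector field on $U(\cK)\times[0,1]$ whose $U(\cK)$-component is generated by the time-dependent infinitesimal action of an element of the Lie algebra of $(\R_{>0})^m$, yielding a coordinatewise linear ODE $\dot z_i = c_i(t)z_i$ whose solutions $z_i(s)=z_i(0)\exp\!\bigl(\int_0^s c_i(\tau)\,d\tau\bigr)$ are defined for all $s$ and preserve $U(\cK)$ (zero coordinates remain zero and non-zero coordinates remain non-zero). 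Integrating $\xi$ from $0$ to $1$ and restricting to $E_0$ then produces the required $(S^1)^m$-equivariant diffeomorphism $U(\cK)/\ker\lambda^{\R}(0)\cong U(\cK)/\ker\lambda^{\R}(1)$.
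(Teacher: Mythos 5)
Your overall strategy coincides with the paper's: form $E=(U(\cK)\times[0,1])/{\sim}$, observe that the projection to $[0,1]$ is an $(S^1)^m$-equivariant fiber bundle, and use contractibility of the base. The paper dismisses local triviality with ``we can easily verify,'' so the substance of your proposal is the attempt to supply that verification, and this is where there is a genuine gap. In your final paragraph you take the lift of the horizontal field to $U(\cK)\times[0,1]$ to have $U(\cK)$-component equal to the infinitesimal action of an element $c(t)\in\R^m$ depending on $t$ alone, so that the ODE is $\dot z_i=c_i(t)z_i$. The time-one flow of such a field is $\z\mapsto g\cdot\z$ for the \emph{fixed} element $g=\exp\bigl(\int_0^1c(\tau)\,d\tau\bigr)\in(\R_{>0})^m$; since $(\R_{>0})^m$ is abelian, multiplication by $g$ carries $\ker\lambda^{\R}(0)$-orbits to $\ker\lambda^{\R}(0)$-orbits, not to $\ker\lambda^{\R}(1)$-orbits, so this flow does not descend to a map $E_0\to E_1$ unless the two subgroups coincide. (Concretely, for $m=2$, $n=1$, $\n_1(t)=1$, $\n_2(t)=-1-t$, the map $(z_1,z_2)\mapsto(\alpha z_1,\beta z_2)$ sends the ray-orbit $\{(yz_1,yz_2)\}$ of $H_0$ to another ray, never to an $H_1$-orbit $\{(y^2w_1,yw_2)\}$.) Equivalently, such a field is not $q$-related to any vector field on $E$, because $q(h\cdot\z,t')\neq q(\z,t')$ once $h\in H_t\setminus H_{t'}$. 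To repair this the generator must depend on $\z$ as well, and the natural candidate that transforms correctly under the varying subgroups is built from $\log|z_i|$, which is singular along the coordinate hyperplanes of $U(\cK)$; smoothness there is precisely the point that needs an argument and your proposal does not address it. (Your construction of $B(t)$ via $B'=-PN'B$ is correct and does show that the $\mathfrak{h}_t$ form a smooth subbundle, but it acts on the group, not on $U(\cK)$.)

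Note also that the obstacle you set out to circumvent is absent in the paper's setting: here $\cK$ is the boundary complex of $P^*$, a simplicial sphere, so every fan $(\cK,\{\n_i(t)\})$ in the family is complete and each fiber $U(\cK)/\ker\lambda^{\R}(t)$ is compact (by Proposition~\ref{prop: app_1} and \cite{pa-us12} it is a copy of the compact moment-angle manifold). Hence $E\to[0,1]$ is a proper submersion with a compatible action of the compact group $(S^1)^m$, and the equivariant Ehresmann argument (average a lift of $\partial_t$ over $(S^1)^m$ and integrate) gives triviality directly. This is presumably the ``easy verification'' the authors intend, and it is the cleanest way to close the gap in your write-up.
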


\begin{proof}
Let $Y$ be the quotient space $U(\cK)\times[0,1]/{\sim}$ where the equivalence relation $\sim$ is defined so that $({\z},t)\sim({\z'},t')$ if and only if $t=t'$ and {$\z$, $\z'$} are in the same orbit of $\ker\lambda^\R(t)$. Then we can easily verify that the projection to the second component descends to an $(S^1)^m$-equivariant smooth fiber bundle $Y\to[0,1]$. Since $[0,1]$ is contractible, we obtain the lemma.
\end{proof}

Let $\Delta=(\cK,{\cV})$ be a complete nonsingular fan {(see Section~\ref{sec:Preliminaries})}. Then {the homomorphism $\lambda_{\cV}\colon(\C^*)^m\to(\C^*)^n$ was} defined in the same way as  $\lambda^\R$ {above}.  {We note that $(\C^*)^m=(\R_{>0})^m\times (S^1)^m$ and denote by $\lambda_{\cV}^\R$ and $\lambda_{\cV}^S$ the restrictions of $\lambda_{\cV}$ to $(\R_{>0})^m$ and $(S^1)^m$ respectively.} Then since
$$ X(\Delta)=U(\cK)/\ker\lambda_{{\cV}}=(U({\cK})/\ker\lambda_{{\cV}}^\R)/\ker\lambda_{{\cV}}^S, $$
we have the following corollary {from} Propostion \ref{prop: app_1} and Lemma \ref{lemm: app_2}.

\begin{coro}\label{coro: app_3}
{If there is a smooth deformation of simplicial fan between a complete nonsingular {fan} $\Delta=(\cK,\cV)$ and the normal fan of a simple polytope $P$ which is not necessarily a rational fan, then the toric manifold $X(\Delta)$} associated with $\Delta$ is $(S^1)^n$-equivariantly diffeomorphic to $\ZP/\ker\lambda_{{\cV}}^S$.
\end{coro}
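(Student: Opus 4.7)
The plan is to chain Proposition~\ref{prop: app_1} with Lemma~\ref{lemm: app_2} via the natural splitting of the algebraic torus. First, I would exploit the decomposition $(\C^*)^m=(\R_{>0})^m\times(S^1)^m$ to factor $\ker\lambda_\cV$ into its real part $\ker\lambda_\cV^\R$ and its compact part $\ker\lambda_\cV^S$. This immediately gives the iterated quotient identification
\begin{equation*}
X(\Delta)=U(\cK)/\ker\lambda_\cV=\bigl(U(\cK)/\ker\lambda_\cV^\R\bigr)\big/\ker\lambda_\cV^S,
\end{equation*}
so that it suffices to produce an $(S^1)^m$-equivariant diffeomorphism $U(\cK)/\ker\lambda_\cV^\R\cong\ZP$ and then divide by the common subgroup $\ker\lambda_\cV^S\subset(S^1)^m$ on both sides.

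Next, given the hypothesized smooth deformation $\{(\cK,\cV(t))\}_{t\in[0,1]}$ of simplicial fans with $\cV(0)=\cV$ and $\cV(1)$ equal to the collection of (not necessarily rational) facet normals $\{\n_i\}$ of $P$, I would apply Lemma~\ref{lemm: app_2} to obtain an $(S^1)^m$-equivariant diffeomorphism $U(\cK)/\ker\lambda_\cV^\R\to U(\cK)/\ker\lambda^\R$, where the right-hand side uses the polytope normals. Proposition~\ref{prop: app_1} then identifies $U(\cK)/\ker\lambda^\R$ $(S^1)^m$-equivariantly with $\ZP$. Composing the two diffeomorphisms and descending to the quotient by $\ker\lambda_\cV^S$ yields the desired $(S^1)^n$-equivariant diffeomorphism $X(\Delta)\to\ZP/\ker\lambda_\cV^S$, where $(S^1)^n$ is identified with $(S^1)^m/\ker\lambda_\cV^S$ via $\lambda_\cV^S$.

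I do not expect a genuine obstacle: all the substantive content is already packaged in Proposition~\ref{prop: app_1} and Lemma~\ref{lemm: app_2}, and what remains is bookkeeping around the real--compact splitting. The only routine checks are that $(S^1)^m$-equivariance persists at each step (it does, since both cited results explicitly assert it), that $\ker\lambda_\cV^S$ acts freely and properly on the intermediate quotients (which follows from completeness and nonsingularity of $\Delta$), and that the residual $(S^1)^n$-action on $\ZP/\ker\lambda_\cV^S$ matches the standard torus action on $X(\Delta)$ under the composed diffeomorphism.
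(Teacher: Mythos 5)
Your proposal is correct and follows essentially the same route as the paper: the authors likewise write $X(\Delta)=U(\cK)/\ker\lambda_{\cV}=(U(\cK)/\ker\lambda_{\cV}^{\R})/\ker\lambda_{\cV}^{S}$ using the splitting $(\C^*)^m=(\R_{>0})^m\times(S^1)^m$, and then simply cite Proposition~A.1 and Lemma~A.2 to identify $U(\cK)/\ker\lambda_{\cV}^{\R}$ with $\ZP$ $(S^1)^m$-equivariantly before passing to the quotient by $\ker\lambda_{\cV}^{S}$. Your write-up just makes explicit the bookkeeping the paper leaves implicit.
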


\begin{proof}[{P}roof of Proposition~\ref{prop: keyproposition}]
Consider the deformation given by
\begin{equation*}
A(t)=(\a_1(t),\ldots,\a_n(t))=\begin{bmatrix}
1 & 0 & \dots &0& a_1t \\
a_2t & 1 & \dots &0& 0\\
\vdots& \vdots &\ddots &\vdots & \vdots \\
0 & 0 & \dots & 1 & 0\\
0 & 0 & \dots & a_nt & 1
\end{bmatrix},\qquad\b(t)=\frac{1}{2}\sum_{i=1}^n\a_i(t).
\end{equation*}
We can easily verify that {this is a smooth family of simplicial fans. Since the column vectors {of} $(-E_n,\ A(0),\ \b(0))$  are the normal vectors of $P$ {in~\eqref{eq: polytope},} where $E_n$ denotes the identity matrix}, the proposition {follows from Corollary~\ref{coro: app_3}}.
\end{proof}

{We finish this paper with three remarks.}

\begin{rema}\label{rema:over vcd}
Every toric manifold of Type 2 is over $\vt(I^n)$ from Proposition~\ref{prop: keyproposition}. Hence, together with Proposition~\ref{prop:proj2}, we can conclude that the toric manifold associated with a fan over $\vcd$ is over $\vt(I^n)$.
\end{rema}
\begin{rema}
Delaunay \cite{dela05} proves that if there exists a projective toric manifold over a simple $3$-dimensional polytope $P$, then $P$ has at least one triangular or quadrangular face (see also \cite{ayze16}).  On the other hand, Suyama \cite{suya15} shows that any triangulation of $2$-sphere with at most $18$ vertices can be the underlying simplicial complex of a complete nonsingular fan of dimension~$3$.  Combining these two results, we {can} see that there exist many non-projective toric manifolds of complex dimension $3$.
\end{rema}
\begin{rema}
Hirzebruch surface $F_a$, where $a$ is a nonnegative integer, is the total space of the projective bundle $\CP(\uC\oplus\mathcal{O}(a))$ over $\CP^1$, where $\uC$ denotes the trivial line bundle over $\CP^1$ and $\mathcal{O}(a)$ denotes the complex line bundle over $\CP^1$ whose first Chern class is $a$ times a generator of $H^2(\CP^1)$.  Therefore, $F_a$ is a Bott manifold.  As is well-known, $F_a$'s are not isomorphic to each other as varieties but $F_a$ and $F_b$ are diffeomorphic if and only if $a\equiv b\pmod{2}$.  The latter fact can be proved in an elementary way but it can also be proved using deformations of complex structures as is well-known.  So, looking at Propositions~\ref{prop:diffeomorphism class of Type 0} and \ref{prop: diffeomorphism class of Type 2}, it would be interesting to ask whether there exist deformations of complex structures on our toric manifolds of Type 0 or 2. {See~\cite{sa03} for related work.}
\end{rema}

\section*{Acknowledgements}  The authors thank Suyoung Choi for providing the proof of Lemma~\ref{lem: cohomology ring of Type 2} when $n=3$ and Hiroaki Ishida for his help on moment-angle manifolds.

\end{document}